\newcommand*\mymatrixbox[5][]{\node [fit= (m-#2-#3) (m-#4-#5)] [draw=blue,thick,solid,rounded corners,inner sep=+2pt,#1] {};}
\journal{Journal of Combinatorial Theory, Series B}
\begin{document}

\begin{frontmatter}

\title{Eigenvectors of the De Bruijn Graph Laplacian:\\A Natural Basis for the Cut and Cycle Space}

\author[gv]{Anthony Philippakis\texorpdfstring{\corref{cor1}}{*}}
\ead{ap@gv.com}
\author[broad,cse]{Neil Mallinar\texorpdfstring{\corref{cor1}}{*}}
\ead{nmallina@ucsd.edu}
\author[iitb]{Parthe Pandit}
\ead{pandit@iitb.ac.in}
\author[hdsi]{Mikhail Belkin}
\ead{mbelkin@ucsd.edu}

\cortext[cor1]{Corresponding author(s)}

\affiliation[gv]{organization={GV},addressline={5 Cambridge Ctr}, 
            city={Cambridge},
            state={MA},
            postcode={02142}, 
            country={USA}}

\affiliation[broad]{organization={The Broad Institute of MIT and Harvard},addressline={415 Main St}, 
            city={Cambridge},
            state={MA},
            postcode={02142}, 
            country={USA}}

\affiliation[cse]{organization={Department of Computer Science and Engineering, UC San Diego},addressline={3235 Voigt Dr}, 
            city={San Diego},
            state={CA},
            postcode={92093}, 
            country={USA}}

\affiliation[iitb]{organization={Center for Machine Intelligence and Data Science, IIT Bombay},
addressline={KReSIT Building},
city={IIT, Powai, Mumbai},
state={Maharashtra},
postcode={400076},
country={India}}

\affiliation[hdsi]{organization={Halıcıoğlu Data Science Institute, UC San Diego},addressline={3234 Matthews Ln}, 
            city={San Diego}, 
            state={CA},
            postcode={92093},
            country={USA}}

\begin{abstract}
We study the Laplacian of the undirected De Bruijn graph over an alphabet $\cA$ of order $k$.  
While the eigenvalues of this Laplacian were found in 1998 by \citet{Delorme1998TheSO}, an explicit description of its eigenvectors has remained elusive.  
In this work, we find these eigenvectors in closed form and show that they yield a natural and canonical basis for the cut- and cycle-spaces of De Bruijn graphs.
Remarkably, we find that the cycle basis we construct is a basis for the cycle space of both the undirected \textit{and} the directed De Bruijn graph.
This is done by developing an analogue of the Fourier transform on the De Bruijn graph, which acts to diagonalize the Laplacian. 
Moreover, we show that the cycle-space of De Bruijn graphs, when considering all possible orders of $k$ simultaneously, contains a rich algebraic structure, that of a graded Hopf algebra.
\end{abstract}

\begin{keyword}
de Bruijn graph \sep graph Laplacian \sep cycle space \sep cut space \sep Hopf algebra \sep shuffle algebra \sep k-mer counts \sep eigenvectors \sep discrete Fourier transform

\end{keyword}

\end{frontmatter}

\paragraph{Declaration of interest and funding sources:} Anthony Philippakis is employed as a Partner at GV (formerly known as Google Ventures) and has received funding from Microsoft, Alphabet, IBM, Intel, Bayer. 
Neil Mallinar is supported by the Eric and Wendy Schmidt Center at The Broad Institute. Parthe Pandit is supported by the Simons Institute for the Theory of Computing at UC Berkeley.
Mikhail Belkin is supported by the National Science Foundation (NSF) and the Simons Foundation for the
Collaboration on the Theoretical Foundations of Deep Learning\footnote{\url{https://deepfoundations.ai/}} through awards DMS-2031883 and \#814639 as well as NSF IIS-1815697 and the TILOS institute (NSF CCF-2112665).
No funding sources were involved in this research, the writing of the article, nor the decision to submit the article for publication.

\tableofcontents
\newpage

\section{Introduction}
\label{introduction}

In this work, we consider the class of directed graphs whose vertices are all ordered $k$-tuples over a given alphabet $\cA$, whose edges are all ordered $k+1$-tuples, and where a directed edge connects two $k$-tuples if the last $k-1$ letters of the source vertex match the first $k-1$ letters of the target vertex (see Figure 1).  Known as a de Bruijn (``dB") graph of order k, it is an object of fundamental importance to both pure and applied mathematics, appearing in fields as diverse as combinatorics, electrical engineering, and bioinformatics, to name just a few examples. They were first introduced by de Bruijn \citep{Bruijn1946ACP} and (independently) by Good \citep{Good1946NormalRD}, and they have been intensely studied since then.  
In particular, there has been a rich literature on the spectral theory of dB graphs, highlighted by papers in the 1990s that identified the eigenvalues of the dB graph Laplacian. \citep{StroSpectrum92,Delorme1998TheSO} 
However, while the eigenvalues of \dB graphs are now well understood, a closed-form characterization of their eigenvectors has remained elusive. 

This paper provides, to the best of our knowledge, the first closed-form description of the eigenvectors of the \dB graph Laplacian. 
Moreover, we use these eigenvectors as the starting point for a deeper analysis of the algebraic and spectral structure of dB graphs.  
In the prior works that analyzed the spectra of \dB graphs, the Laplacian and adjacency matrices are obtained by dropping the directional edge information from the directed \dB graph. 
In our work, we follow on this line of analysis.

The primary contributions of this work are the following:\\
\paragraph{(1) We simultaneously view words of length k and k+1 as the vertices and edges of a db graph, respectively, and as order k and k+1 tensor products of a vector space whose basis is indexed by the letters of our alphabet.}  This duality allows us to seamlessly translate between graph theory and multilinear algebra, leveraging results from each.  In particular, we demonstrate that essentially all of the spectral theory of dB graphs can be understood in terms of the actions of a pair of simple operators and their transposes on these tensor products, and we show that the incidence matrix, adjacency matrix, and Laplacian can all be expressed in terms of this pair of operators.\\
\paragraph{(2) We present a closed-form derivation of the eigenvectors of \dB graphs, and we show that they provide natural orthogonal bases for the cut space and cycle space of \dB graphs that have not previously been appreciated in the literature.}  Our approach relies on the aforementioned duality between dB graphs and tensor products;
while certain operations are not natural on graphs, they are very natural on tensor spaces.  One such operation is the discrete Fourier transform, which can be performed on a vector space after identifying the letters of our alphabet with elements of a finite group. This discrete Fourier transform naturally extends to an action on the tensor space.  We show that, in the same way that the Fourier transform diagonalizes the Laplace operator on $\R$, our discrete Fourier transform makes the dB graph Laplacian, $L_V$ a ``tri-diagonalized" operator.  From there, it's eigenvectors are easily recognized in Fourier space.

Moreover, it is well known that the set of all functions on the edge space of a graph can be decomposed into the \textit{cut space} (representing all cut sets on a graph) and it's orthogonal complement, the \textit{cycle space} (representing all cycles on a graph). 
Cuts and cycles of a graph are of fundamental importance in the fields of mathematics and computer science, and have been studied at great length in both theory and application.\footnote{See: \citet{Boykov2006,Yi2012ImageSA,snuil_cycleconnectivity,VASILIAUSKAITE2022127097,Diestel2005TheCS,Lewis2020AHA} for just a few such examples.}
Therefore, it is of great interest to understand the structure of these function spaces.
In our analysis, we show that the eigenvectors of the dB Laplacian can be naturally transformed into a natural basis for the cycle space and cut space of the graph.  These bases are orthogonal (unlike most bases for the cut and cycle space), and have a number of symmetries.
Furthermore, while our analysis uses the undirected \dB graph Laplacian, we find that the eigenvectors we derive are in the null space of the \textit{directed} incidence operator.
As such, they simultaneously form a basis for the cycle space of directed and undirected De Bruijn graphs.\\

\begin{figure}[!t]
    \centering
    \includegraphics[width=0.8\textwidth]{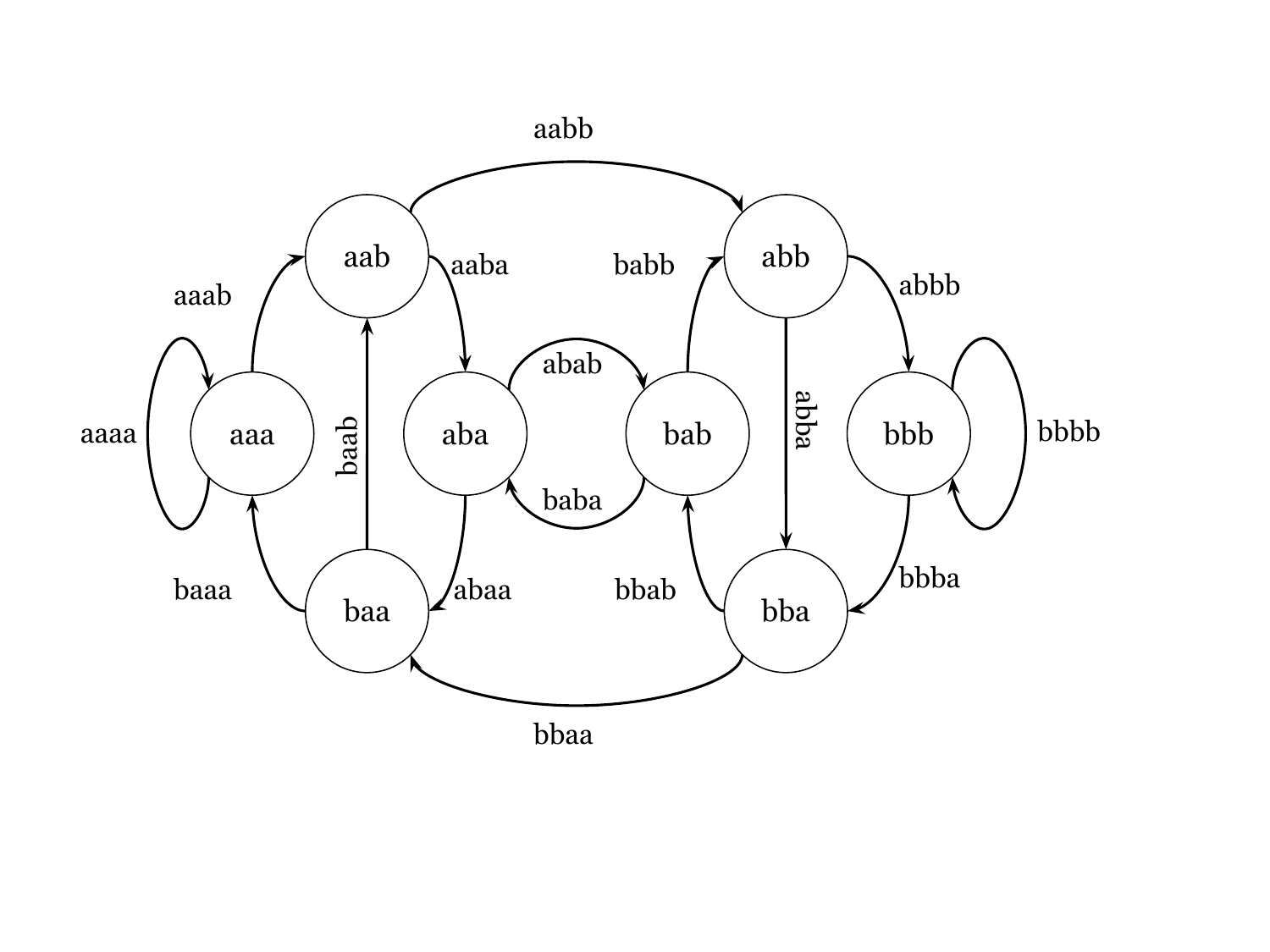}
    \caption{The $4^{th}$-order de Bruijn graph, $G_4$, with letters over the alphabet $\cA = \{a, b\}$.}
    \label{fig:G4_graph_example}
\end{figure}

\paragraph{(3) We demonstrate that, when considering the collection of dB graphs across all possible orders k, the collection of cycle spaces has a rich algebraic structure, that of a graded Hopf algebra.} This observation allows us to connect \dB graphs to the larger fields of representation theory and algebraic topology.
In particular, we find that the eigenvectors of the cycle space form a shuffle algebra.
These, and similar structures show up in various areas ranging from combinatorics \citep{Crossley2006SOMEHA,Patrias2015AntipodeFF,Grinberg2014HopfAI,Hazewinkel2000GeneralizedOS} to quantum field theory \citep{yeats_combinatorial}, and formalizing such connections should lead to a more generalized understanding of \dB graphs.\\ 

While the focus of our efforts is to better understand the spectral theory of dB graphs, it also opens the door to future applications in bioinformatics and natural language processing. In bioinformatics, the $k$-tuples are all subsequences of $k$ characters from the DNA alphabet, often referred to as $k$-mers.
The edges then represent $(k+1)$-mers and a weighted \dB graph encodes the edges with the occurrence count of $(k+1)$-mers from fragments of DNA reads.
A vector in the edge space of this \dB graph is referred to as a $k$-mer count vector.  
A number of important algorithms, ranging from assembling genomes to understanding gene regulation, are based on this $k$-mer count vector.  \citep{dBassembly,readmappingdB,applications_dB}
Similarly, in natural language processing these vectors are called ``bag-of-words" (BoW) vectors, where the ``alphabet" is over all words from some language, and a $k$-mer is the equivalent of what researchers  call an $n$-gram.
BoW vectors are fundamental objects in representing natural language and have driven much of the field of statistical and machine learning, in particular with respect to language data. 
\citep{Cavnar1994NgrambasedTC,Geiger2010TextCI,ALTINEL20181129,Lodhi2002TextCU}

In this manuscript, we show that $k$-mer count vectors are a lattice in the cycle space of \dB graphs.
Therefore, we can think of the eigenvectors of the \dB graph Laplacian as a basis for decomposing this vector.  Given that these eigenvectors play a role in functions on words that is analogous to that of sines and cosines on the real line, this opens the door to performing ``harmonic analysis'' on word frequencies. We therefore believe that this work provides the starting point for a fruitful framework for theorists and practitioners alike.
 \clearpage{}\subsection{Notation}
We provide a table summarizing symbols and notation used throughout the paper.
\begin{table}[ht!]
\centering
\resizebox{1.0\textwidth}{!}{\begin{tabular}{@{}ll@{}}
\toprule
Symbol & Description \\ \midrule
    $\cA$   &   finite alphabet with $|\cA|$ letters \\
    $q$ & $|\cA|$ $q-$ary alphabet \\
    $k$ & $k$-mer size \\
    $G$   &  directed or undirected graph           \\
    $V, V(G)$   &  vertices of a graph, $G$           \\ 
    $E, E(G)$  &   edges of a graph, $G$ \\
    $\cV(G)$    &   vertex space of a graph $G$, spanned by indicator vectors of vertices, dimension $|V(G)|$  \\
    $\cE(G)$    &   edge space of a graph $G$, spanned by indicator vectors of directed edges, dimension $|E(G)|$  \\
    $\cC(G)$    &   cycle space of a $G$, spanned by indicator vectors of directed cycles, dimension $|E(G)| - |V(G)| + 1$ \\
    $\cC(G)^\perp$    &   cut space of a $G$, dimension $|V(G)| - 1$ \\
    \midrule
    $\bfA$  &   adjacency matrix, $|V| \times |V|$ \\
    $\bfE$  &   incidence matrix, $|V| \times |E|$ \\
    $\bfL_V$    &   vertex Laplacian matrix $\bfE\bfE\tran$ \\
    $\bfL_E$    &   edge Laplacian matrix $\bfE\tran\bfE$\\\midrule
    $G_k$ & deBruijn graph of order $k$\\
    $|V(G_k)|$ & number of vertices in $G_k$, $q^{k-1}$\\
    $|E(G_k)|$ & number of edges in $G_k$, $q^{k}$\\
$\cV_\cA^k$   &   vector space $\Complex^{q^k}$\\
    $\cV(G_k)$    &   vertex space of a deBruijn graph $G_k$, dimension $\cV_\cA^{k-1}$  \\
    $\cE(G_k)$    &   edge space of a deBruijn graph $G_k$, dimension $\cV_\cA^k$  \\
    $\cC(G_k)$    &   cycle space of $G_k$, dimension $\cV_\cA^{k} - \cV_\cA^{k-1} + 1$ \\
    $\cC(G_k)^{\perp}$    &  cut space of $G_k$, dimension $\cV_\cA^{k-1} - 1$ \\\midrule
    $X_\cA^i$ &  Fourier subwords of length $i \leq k$. Set of basis vectors of $\cC(G)$. $|X_\cA^i| = \begin{cases} q^{i-2}(q- 1)^2 & i \geq 2\\
    q - 1 & i=1\\
    1 & i=0
    \end{cases}$, \\
$j$ & total padding for subwords such that $i + j = k$ \\
    $\ell$ & summation index over all paddings, $\ell \in \{1, \cdots, j+1\}$ for $\Lambda_V$ and $\ell \in \{0, \cdots, j+1\}$ for $\Lambda_E$ \\
    $h$ & indexes eigenvector-eigenvalue pairs for a given $(i, j)$ pair, $h \in \{1, \cdots, j+1\}$ for $\Lambda_V $ and $h \in \{0, \cdots, j+1\}$ for $\Lambda_E$\\
    \midrule
    $\theta_L, \hatT_L$ & left delete operator \& Fourier equivalent\\
    $\theta_R, \hatT_R$ & right delete operator \& Fourier equivalent\\
    $A, \hat{A}$ & adjacency operator \& Fourier equivalent\\
    $\inc, \hat{\inc}$ & incidence operator \& Fourier equivalent\\
    $\Lambda_V, \hLv$ & vertex Laplacian operator \& Fourier equivalent\\
    $\Lambda_E, \hLe$ & edge Laplacian operator \& Fourier equivalent\\
    \bottomrule
\end{tabular}}
\label{tab:notation}
\end{table}

\clearpage{}
\section{Preliminaries}

We first introduce a set of basic concepts in graph theory, multilinear algebra, and Fourier analysis that will be used throughout the paper. 
A key object in our work is that of a $k$-mer, which we define here.

\begin{definition}[$k$-mers] 
   For an alphabet $\cA$ composed of $q$ letters, we use $\cA^{k}$ to denote the set of all strings of length $k$ comprised of letters from $\cA$. A $k$-mer is any $s \in \cA^k$.
\end{definition} 

\subsection{Graph theory}

\subsubsection{Vector spaces of graphs}

Consider a \textbf{directed graph}, $G = (V, E)$ where $V = \{v_1, v_2, \cdots, v_n \}$ is a set of vertices and $E = \{e_1, e_2, \cdots, e_m \}$ is a set of edges, each of which is an ordered tuple $e = (v_1, v_2)$ indicating the vertex $v_1$ is the \textbf{``outgoing"} vertex of $e$ and $v_2$ is the \textbf{``incoming"} vertex.
The \textbf{in-degree} of a vertex is the number of edges for which it is an incoming vertex.
The \textbf{out-degree} of a vertex is the number of edges for which it is an outgoing vertex.
The \textbf{degree} of a vertex is the sum of the in-degree and out-degree of that vertex.

A \textbf{path} in the graph represents an ordered tuple of edges, $(e_1, e_2, \cdots, e_i)$ such that for any subsequent edges in the path, $e_j, e_{j+1}$, the incoming vertex on $e_j$ is the same as the outgoing vertex on $e_{j+1}$.
A graph is considered \textbf{connected} if there is a path between any two vertices in the graph.
A \textbf{cycle} in the graph represents a path $(e_1, e_2, \cdots, e_i)$ such that incoming vertex on $e_1$ is the same as the outgoing vertex on $e_i$. An \textbf{Eulerian path} in the graph is a path that visits every edge in the graph exactly once (vertices can be revisited). An \textbf{Eulerian cycle} is an Eulerian path that satisfies the conditions of being a cycle.

We now state Euler's Theorem and refer the reader to \citet{bollobas_modern} for further exposition and proof.
\begin{theorem}[Euler's Theorem]
\label{thm:eulers_thm_graph_cycle}
A directed, connected graph contains an Eulerian cycle \textit{if and only if} the degree of the in-degree is the same as the out-degree for every vertex in the graph.
\end{theorem}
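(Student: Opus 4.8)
The plan is to prove the two implications separately, treating necessity as a short counting argument and reserving the main effort for sufficiency, which I would establish by an explicit trail-splicing construction.

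First, for the necessity direction, I would assume $G$ contains an Eulerian cycle $C$ and fix an arbitrary vertex $v$. Since $C$ is a closed walk using every edge exactly once, I would bookkeep the incidences at $v$ by tracking the passages of $C$ through $v$: each time $C$ arrives at $v$ it does so along exactly one edge and departs along exactly one edge, and because $C$ is \emph{closed}, the very first departure and the very last arrival also pair up. Thus the arrivals and departures at $v$ are in bijection. Every incoming edge at $v$ is used as an arrival exactly once and every outgoing edge as a departure exactly once, so the in-degree equals the number of arrivals and the out-degree equals the number of departures; the bijection then forces the in-degree of $v$ to equal its out-degree. As $v$ was arbitrary, the balance condition holds at every vertex.

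For the sufficiency direction, I would assume $G$ is connected with in-degree equal to out-degree at every vertex and build an Eulerian cycle. The key sub-lemma is a greedy-extension claim: starting from any vertex $u$ that still has an unused outgoing edge and following unused edges one at a time, the walk can only terminate back at $u$. Indeed, whenever the walk enters some vertex $w \neq u$ along a previously unused edge, it has consumed one more incoming than outgoing edge at $w$, so by the balance condition an unused outgoing edge at $w$ must remain and the walk can continue; hence it can get stuck only at $u$, yielding a closed trail $T$. I would then finish by a maximality (or induction on $|E|$) argument: if $T$ already uses every edge, we are done; otherwise, connectivity forces some vertex $u$ lying on $T$ to have an unused outgoing edge. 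Deleting the edges of $T$ subtracts exactly one from both the in-degree and the out-degree at each vertex it visits, so the residual subgraph of unused edges is again balanced; applying the sub-lemma at $u$ produces a second closed trail $T'$, which I splice into $T$ at $u$ to obtain a strictly longer closed trail. Iterating until no edges remain gives the Eulerian cycle.

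The main obstacle is this sufficiency argument, and in particular the splicing step, where two invariants must be maintained simultaneously: \textbf{balance} of the residual subgraph, which is what makes the greedy extension always succeed locally, and \textbf{connectivity}, which is what guarantees that some attachment vertex $u$ is shared between the current trail and the still-unused edges, so that the construction actually exhausts all edges rather than stalling on a disconnected remainder. Reconciling local extendability with global coverage is where the real care lies; the counting in the necessity direction is routine by comparison.
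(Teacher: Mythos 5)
Your proposal is correct: it is the classical Hierholzer-style argument, with necessity by pairing arrivals with departures along the closed trail and sufficiency by greedy extension of a closed trail plus splicing, using balance of the residual graph as the key invariant. Note, however, that the paper itself offers no proof of this statement at all --- it explicitly defers to the literature (``we refer the reader to Bollob\'as for further exposition and proof''), so there is nothing internal to compare against; your argument is essentially the standard textbook proof that the cited reference contains. Two small points deserve tightening if this were written out in full. First, deleting the edges of $T$ subtracts from the in-degree and out-degree of a vertex the \emph{number of passages} of $T$ through that vertex, not necessarily one; the invariant you actually need (and do use) is that equal amounts are subtracted from both, so the residual graph stays balanced. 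Second, the claim that connectivity yields a vertex of $T$ with an unused \emph{outgoing} edge needs two short steps: an unused edge must be incident to some vertex $w$ on $T$ (otherwise the vertex set of $T$ and its complement would be disconnected from each other in $G$), and if that incident edge is incoming at $w$, residual balance at $w$ then supplies an unused outgoing edge. One should also fix the meaning of ``connected'' for a directed graph: weak connectivity together with the balance condition suffices (and in fact implies strong connectivity), which is the reading under which the theorem as stated is true.
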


The \textbf{adjacency matrix} of a graph $\mathbf{A}$ is an $n \times n$ matrix such that $A_{ij} = 1$ if $(v_i, v_j) \in E$ or $(v_j, v_i) \in E$, otherwise $A_{ij} = 0$. 
The \textbf{incidence matrix} of a graph $\mathbf{E}$ is an $m \times n$ matrix such that: $\mathbf{E}_{ji} = -1$ if $v_i$ is the outgoing vertex for edge $e_j$; $\mathbf{E}_{ji} = 1$ if $v_i$ is the incoming vertex for edge $e_j$; $\bfE_{ji} = 0$ otherwise.
The \textbf{vertex Laplacian matrix} of a graph $\bfL_V$ is an $n \times n$ matrix constructed as $\bfE^T \bfE$.
The \textbf{edge Laplacian matrix} of a graph $\bfL_E$ is an $m \times m$ matrix constructed as $\bfE \bfE^T$.

The \textbf{vertex \textit{vector} space} of a graph, $\cV(G)$, is a vector space spanned by basis elements representing each vertex of the graph.
The \textbf{edge \textit{vector} space} of a graph, $\cE(G)$, is a vector space spanned by basis elements representing each edge of the graph.

A \textbf{spanning subgraph}, $G'$ of a graph $G$, is a subgraph constructed from all of the vertices but only a subset of the edges of $G$, \ie,
$$G':=(V(G),E')\qquad E'\subseteq E(G)$$
The \textbf{cycle space} of $G$, denoted $\mc C(G)$, is the set of all spanning subgraphs of $G$ with each vertex having an even degree, or in the case of a directed graph the each vertex must have in-degree $=$ out-degree.
From \Cref{thm:eulers_thm_graph_cycle}, the cycle space of $G$ is the set of spanning subgraphs of $G$ that each contains an Eulerian cycle.

\begin{definition}
The dimension of the vector space representing the cycle space of $G$ $= m - n + c$ where $c$ is the number of connected components in $G$. This quantity is also known as the \textbf{circuit rank} of $G$, or the \textbf{cyclomatic number}, \textbf{cycle number}, or \textbf{nullity} of $G$.
\end{definition}

\subsubsection{De Bruijn graphs}

Let $s$ be a string of length $k$ given by $s = s_0 s_1 s_2 \cdots s_{k-1}$, where each $s_i \in \cA$.
Different $k$-mers will be referred to by superscripts, for example, $s^{(i)} = s^{(i)}_0s^{(i)}_1\cdots s^{(i)}_{k-1}$ and $s^{(j)} = s^{(j)}_0s^{(j)}_1\cdots s^{(j)}_{k-1}$ where $i, j \in \{0, 1, \cdots, |\cA^k|-1\}$.

\begin{definition}[De Bruijn graph]
The De Bruijn graph of order $k$ is denoted $G_k=(\cA^{k-1}, \cA^k)$ with vertices, $V(G_k)$, given by all strings $s \in \cA^{k-1}$ and edges between two strings $(s^{(i)}, s^{(j)})\in E(G_k)$ iff the last $k-2$ letters in $s^{(i)}$ are equal to the first letters in $s^{(j)}$. 
\end{definition}

Note here for the sake of notational simplicity we refer to the edge set as $\cA^k$, though it is not, rather it is isomorphic to $\cA^k$ in the following way: each edge in $G_k$ of the form $(s^{(i)}, s^{(j)})$ bijectively maps to the string $s^{(i)}_0s^{(i)}_1\cdots s^{(i)}_{k-1}s^{(j)}_{k-1} = s^{(i)}_0 s^{(j)}_0s^{(j)}_1\cdots s^{(j)}_{k-1}$ of length $k$.

$G_k$ has vertices representing $(k-1)$-mers and edges representing $k$-mers.
On an alphabet $\cA$, there are $|\cA|^{k-1}$ many $(k-1)$-mers and $|\cA|^k$ $k$-mers.
Therefore there are $n = |\cA|^{k-1}$ vertices in $G_k$ and $m = |\cA|^{k}$ edges in $G_k$.
By construction, we also have that $G_k$ has $c=1$ connected components and is a strongly connected, regular graph of degree $2$, with in-degree equal to the out-degree for every node.

Figure \ref{fig:G4_graph_example} depicts the \dB graph $G_4$ over the two-letter alphabet $\cA = \{A, B \}$. 

\subsection{Multilinear algebra}

In this paper we work primarily with circular strings, defined as follows.

\begin{figure}
  \centering
    \begin{tikzpicture}
        \draw[  postaction={         decorate,            decoration={                text along path,                text={abbbababbbababbabababaaabaabaaabbbbbabaaabbabaababbbbbaaababa},                text align={align=center},                raise=-2ex            }        }    ] (0,0) circle (2.13cm);
    \end{tikzpicture}
  \caption{Circular string composed of the letters $\cA = \{a, b\}$.}
  \label{fig:circ_string}
\end{figure}

\begin{definition}[Circular string] 
   A string of length $n$ given by $s = s_0s_1s_2\cdots s_{n-1}$ is called circular if it represents the underlying, infinite string $s_0s_1\cdots s_{n-1}s_0s_1s_2 \cdots$ obtained by repeating the string with periodicity $n$.
   Figure \ref{fig:circ_string} shows an example circular string over an alphabet of two letters.
\end{definition}

\subsubsection{Tensor product on vector spaces}

Let $\cV_\cA$ be the $|\cA|$-dimensional complex vector space, $\C^{|\cA|}$, with standard basis elements indexed by the letters of $\cA$.
For a given $k$, let 
$$\cV_\cA^k = \underbrace{\cV_\cA \otimes \cdots \otimes \cV_\cA}_{\textrm{$k$ times}}$$
where $\otimes$ is the tensor product operation.
Throughout this manuscript, when we refer to variables we use $s = s_0s_1\cdots s_{k-1}$ to refer to $k$-mer strings, where $s_i \in \cA$.
We will also use the variable $v = v_0 v_1\cdots v_{k-1}$ to refer to $k$-mer tensors in $\cV_\cA^k$.
Subscripts will refer to different letters or indices in the tensor product of the $k$-mer and it's representation in $\cA^k$ and $\cV_\cA^k$, respectively.
Superscripts will refer to different $k$-mers and tensors of $\cA^k$ and $\cV_\cA^k$ as needed.
For specific examples, we will use the two letter alphabet $\cA = \{a, b\}$.

\begin{example}[Tensor Product Basis] 
On $\cA = \{a, b\}$, the standard basis for $\VA^2$ would be given by the tensor products $$aa = a \otimes a \qquad ab = a \otimes b \qquad ba = b \otimes a \qquad bb = b \otimes b$$ and any element of $\VA^2$ can be expressed as a sum of these basis vectors.
\end{example}

For the interested reader, \citet{roman2007advanced} provides a thorough background on tensors and tensor products over vector spaces in Chapter 14.\footnote{\citet{nlab:tensor_product_of_vector_spaces} also provides a thorough, interactive website that covers many of the topics in this paper with respect to tensors and algebras.}

\begin{example}[Tensor product of vector spaces representing words]
    Let $\cA = \{a, b\}$, and $k=3$. This produces the following 8 $k$-mers: $aaa$, $aab$, $aba$, $abb$, $baa$, $bab$, $bba$, $bbb$. Assume $\VA$ has standard basis elements $a, b$, and $\VA^k$ is as defined above with standard basis elements given by $3$-tensor products on combinations of $a, b$. 

    Consider the (circular) word, $s$, given in Figure \ref{fig:circ_string}.
    The $k$-mer counts of this word is as follows:
    \begin{equation*}
    s: \quad aaa: 4 \quad aab: 7 \quad aba: 12 \quad abb: 6 \quad baa: 7 \quad bab: 11 \quad bba: 6 \quad bbb: 8
    \end{equation*}

    In $\R^8$ we can represent these counts with the vector $[4, 7, 12, 6, 7, 11, 6, 8]$. 
    In the tensor product basis on $\VA^3$, we can similarly represent this count vector with the tensor:
    \begin{align*}
        v: \qquad &4 (aaa) + 7 (aab) + 12 (aba) + 6 (abb) \\ + &7 (baa) + 11 (bab) + 6 (bba) + 8 (bbb)
    \end{align*}
\end{example}

\subsubsection{Operators over tensors}

We proceed to define a set of useful operators over tensors, but first require the following definition that is used extensively in our analysis.

\begin{definition}[de Bruijn vector, $\dBvec$]
    \label{def:dbvec}
    We refer to $\frac{1}{\sqrt{q}} \mathbf{1} \in \R^{q}$ as the ``de Bruijn vector" and denote it by $\dBvec$. In the tensor basis for $\cV_\cA^k$ the de Bruijn tensor is given by $\frac{1}{\sqrt{q^k}} \underbrace{(\dBvec \otimes \dBvec \cdots \otimes \dBvec)}_{k \text{ times}} \cong \frac{1}{\sqrt{q^k}} \mathbf{1} \in \R^{q^k}$.
    Note that if we assign each letter in $\cA$ to a standard basis vector $e_0, e_1, \cdots, e_{q-1}$, then $\dBvec$ is the virtual character with a unique representer in the vector space corresponding to $\cA$, given by $\dBvec = \frac{1}{\sqrt{q}}\sum_{a \in \cA} e_a = \frac{1}{\sqrt{q}}\sum_{a \in \cA} a$.
\end{definition}

\begin{example}[de Bruijn vector, $\dBvec$]
    Let $\cA = \{a, b\}$ with standard basis elements given as $a = \begin{pmatrix}
        1 \\ 0
    \end{pmatrix}$ and $b = \begin{pmatrix}
        0 \\ 1
    \end{pmatrix}$.
    Then $\dBvec = \frac{1}{\sqrt{2}}(a + b) = \frac{1}{\sqrt{2}}\begin{pmatrix}
        1 \\ 1
    \end{pmatrix}$.

    On $\VA^2$, we would then have
    \begin{align*}
        \dBvec\dBvec = \frac{1}{2}(a + b) \otimes (a+b) = \frac{1}{2}(aa + ab + ba + bb).
    \end{align*}
\end{example}

We now define two crucial operator overs $k$-mers.
Let $v \in \VA^k$ be a $k$-mer given by $v = v_0 v_1 v_2 \cdots v_{k-1}$.

\begin{definition}[Deletion operators] 
\label{def:std_delete}
For $k=1$, returns scalar 1, $k=0$, returns 0.
For $k\geq 2$, we have the left and right delete operators $\theta_L$ and $\theta_R$ as linear maps from $\VA^k\mapsto \VA^{k-1}$, defined as follows,
\begin{align}
    \theta_L(v) &= \frac{1}{\sqrt{|\cA|}} v_1\ldots v_{k-1}\in \VA^{k-1}\qquad\forall v \in\VA^k \\
    \theta_R(v) &= \frac{1}{\sqrt{|\cA|}} v_0\ldots v_{k-2}\in \VA^{k-1}\qquad\forall v \in\VA^k.
\end{align}
\end{definition}
These operators have their own adjoint operators, which are linear maps from $\VA^{k-1}\mapsto\VA^{k}$ given by, 
\begin{align}
    \theta^*_L(v) &= \dBvec v \in\VA^k \qquad\forall v \in\VA^{k-1} \\
    \theta^*_R(v) &= v \dBvec \in\VA^k \qquad\forall v \in\VA^{k-1}
\end{align}
where $\dBvec$ is the de Bruijn vector defined in Definition \ref{def:dbvec}.
Note that $\theta_R, \theta_L$ commute for all $k$.
We provide further intuitions and details on the deletion operators and their adjoints in \ref{apdx:deletion_ops}.
\newpage

We show here how the adjacency, incidence, and Laplacian matrices of the DeBruijn graph are represented using the deletion operators and their adjoints.
As in the case with $\theta_L, \theta_R$, the graph operators presented here simultaneously apply for all $k$, as opposed to the matrix form which is defined for a specific $k$.

We first present the matrix form of the adjacency, incidence, and Laplacian matrices.
Consider the DeBruijn graph $G_k = (\cA^{k-1}, \cA^k)$.
The adjacency matrix $\mathbf{A} \in \R^{\cA^{k-1} \times \cA^{k-1}}$ is constructed as follows: for vertices $v_1, v_2 \in \cA^{k-1}, \bfA_{v_1, v_2} = 1$ if $\exists$ a directed edge $(v_1, v_2)$ or $(v_2, v_1) \in \cA^k$, and $\bfA_{v_1, v_2} = 2$ if $v_1 = v_2$ (e.g. if there is a directed self-loop).
The incidence matrix $\bfE \in \R^{\cA^{k-1} \times \cA^{k}}$ is constructed as follows: for directed edge $(v_1, v_2) \in \cA^k, \bfE_{v_1, (v_1, v_2)} = -1$ and $\bfE_{v_2, (v_1, v_2)} = +1$. If $v_1 = v_2$ then $\bfE_{v_1, (v_1, v_1)} = 0$.
The vertex Laplacian $\mathbf{L}_V \in \R^{\cA^{k-1} \times \cA^{k-1}}$ is defined as $\mathbf{L}_V = \bfE \bfE^T = 2\mathbf{D} - \bfA = 2\mathbf{I} - \bfA$, as $G_k$ is a regular graph of degree 2.
The edge Laplacian $\mathbf{L}_E \in \R^{\cA^{k} \times \cA^{k}}$ is defined as $\mathbf{L}_E = \bfE^T \bfE$.

All of these matrices have equivalent definitions as operators, though we introduce new normalization factors in the operators for ease of bookkeeping later in the manuscript.
We define the corresponding operators here.
\begin{definition}[Adjacency operator]
    For $G_k = (\cA^{k-1}, \cA^k)$, the adjacency operator is defined as $\adj: \VA^{k-1} \to \VA^{k-1}$,
    \begin{equation}
        \adj = (\theta_R^* \theta_L + \theta_L^* \theta_R).
    \end{equation}
\end{definition}
We note that this is a different normalization than is standard for the adjacency matrix of a graph, that instead of having 1 in the adjacency you end up with $1/\cA$.

\begin{figure}
    \centering
    \includegraphics[width=0.8\textwidth]{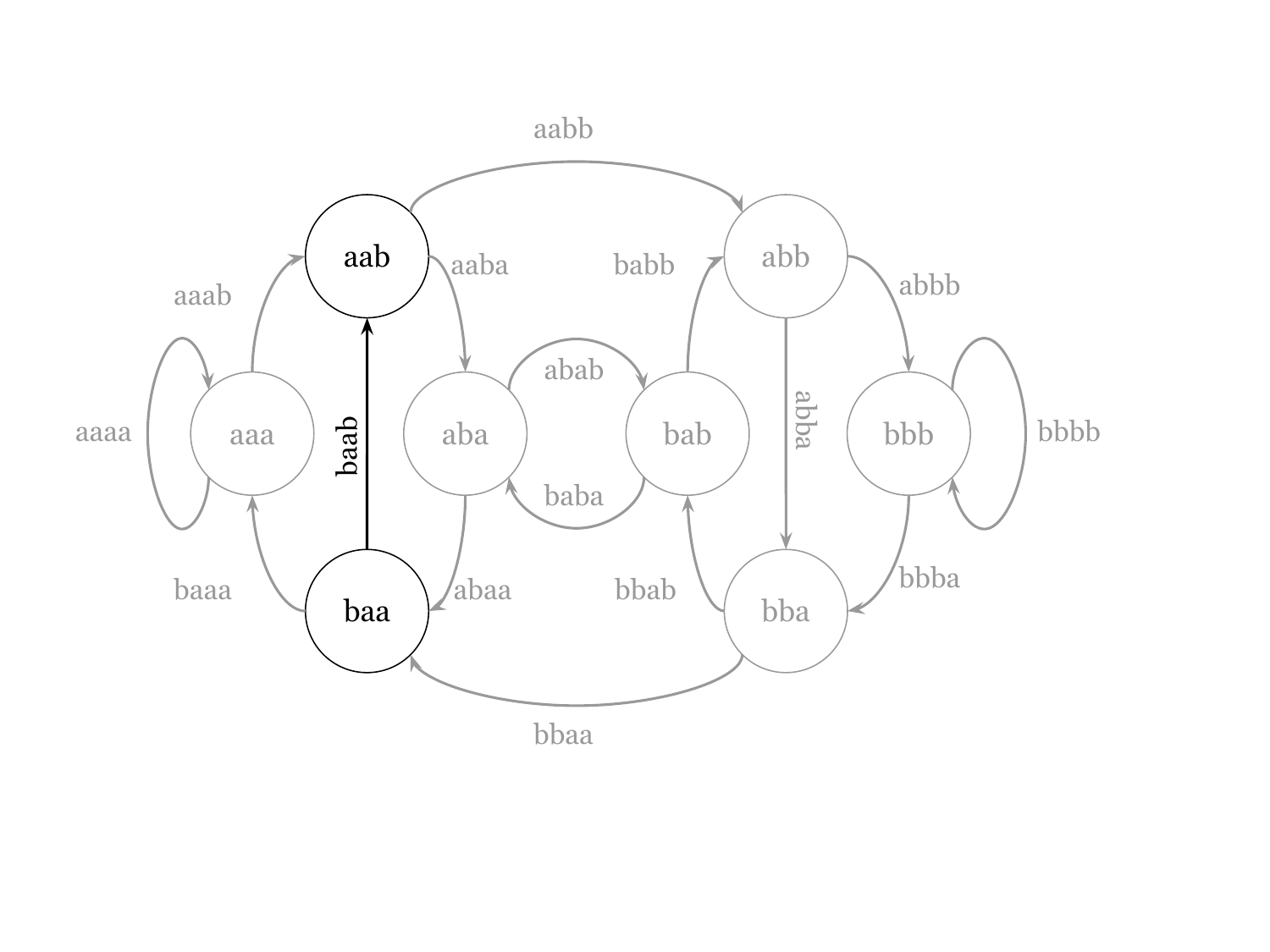}
    \caption{The De Bruijn graph $G_4$ on the alphabet $\cA = \{a, b\}$. We highlight two vertices and the directed edge between them. By definition, the incidence matrix encodes $+1$ on the element given by the edge and the incoming vertex, and $-1$ on the element given by the edge and the outgoing vertex. The incidence operator, similarly, acts as $\inc(baab) = \theta_L(baab) - \theta_R(baab) = aab - baa$.}
    \label{fig:incidence_example}
\end{figure}

\begin{definition}[Incidence operator]
    For $G_k = (\cA^{k-1}, \cA^k)$, the incidence operator is defined as $\inc: \VA^k \to \VA^{k-1}$,
    \begin{equation}
        \inc = \theta_L - \theta_R.
    \end{equation}

    It's adjoint is similarly defined as $\inc^*: \VA^{k-1} \to \VA^k$,
    \begin{equation}
        \inc^* = \theta_L^* - \theta_R^*.
    \end{equation}
\end{definition}
Observe that the operator $\inc$ is equivalent to a differently normalized version of the incidence matrix $\mathbf{E}$ where instead of $+1, -1$ in the incidence matrix, the operator outputs $+\frac{1}{\sqrt{\cA}}, -\frac{1}{\sqrt{\cA}}$.

\begin{definition}[Vertex Laplacian operator]
    For $G_k = (\cA^{k-1}, \cA^k)$, the vertex Laplacian operator is defined as, $\Lv: \VA^{k-1} \to \VA^{k-1}$,
    \begin{align*}
        \Lv &= \inc \inc^* \\
            &= (\theta_L - \theta_R)(\theta_L^* - \theta_R^*)\\
            &= (\theta_L\theta_L^* + \theta_R\theta_R^*) - (\theta_R^*\theta_L + \theta_L^*\theta_R) \\
            &= 2I - A.
    \end{align*}
\end{definition}

\begin{definition}[Edge Laplacian operator]
    For $G_k = (\cA^{k-1}, \cA^k)$, the edge Laplacian operator is defined as, $\Le: \VA^{k} \to \VA^{k}$,
    \begin{align*}
        \Le &= \inc^* \inc \\
            &= (\theta_L^* - \theta_R^*)(\theta_L - \theta_R)\\
            &= (\theta_L^*\theta_L + \theta_R^*\theta_R) - (\theta_R^*\theta_L + \theta_L^*\theta_R) \\
            &= (\theta_L^*\theta_L + \theta_R^*\theta_R) - A.
    \end{align*}
\end{definition}

Note: as mentioned above this is a slightly non-standard normalization for the graph operators, however it simplifies the bookkeeping with respect to constants of $|\cA|$ going forward.

\subsection{Fourier analysis}
\label{sec:dft}

\subsubsection{Discrete Fourier / Hadamard Transform}

Our construction of a natural orthogonal basis for the cycle- and cut-space of the de Bruijn graph will rely on a change of basis such that, rather than using the standard basis for the vector space, we instead use an orthogonal basis for $\VA^k$ where one of the vectors is the de Bruijn vector $\dBvec = \frac{1}{\sqrt{q}} \mathbf{1} \in \R^{q}$.  While the other basis vectors can be chosen arbitrarily (as long as they are orthonormal), it is especially natural to consider the following two choices for a change of basis, as they will act to diagonalize the de Bruijn graph Laplacian in the same way that the Fourier transform diagonalizes the laplacian on the real line:

\begin{enumerate}
    \item the Discrete Fourier Transform (DFT), $F_{q} \in \C^{q \times q}$, whose $i^{\text{th}}$ column is given by $F_{q}^{(i)} =$ 

    $\begin{pmatrix}
        \omega_q^{0\cdot i} &
        \omega_q^{1\cdot i} &
        \cdots &
        \omega_q^{(q-1) \cdot i}
    \end{pmatrix}^T$
    where $\omega_q = e^{-2\pi i / q}$ is a $q^{\text{th}}$ root of unity.
    \item the Walsh-Hadamard transform,
    $H_{q'} \in \R^{2^{q'} \times 2^{q'}}$, which in our case can be used if $\exists \ q' \geq 1$ such that $q = 2^{q'}$.
    The Hadamard transform for $q'=1$ coincides with $F_2$, in that $H_1 = F_2 = \mattwo{1}{1}{1}{-1}$. For $q' > 1$, we have the recursive definition $H_q = \begin{pmatrix}
        H_{q'-1} & H_{q'-1} \\
        H_{q'-1} & -H_{q'-1}
    \end{pmatrix}$
\end{enumerate}

For an element $v \in \VA^k$, the transformation to $\hat{v}$ is given by $\hat{v} = Fv = \underbrace{(F_q \otimes F_q \otimes \cdots \otimes F_q)}_{k \text{ times}}(v) = \hat{v}_0 \hat{v}_1 \cdots \hat{v}_{k-1}$.
We have analogous definitions for the transformation $v = F^{-1}\hat{v}$.\\

We can now define the right and left deletion operators as they act on an element in Fourier space; given $\hat{v} = \hat{v}_0\hat{v}_1\cdots\hat{v}_{k-1}$:
\begin{align}
    \hat{\theta}_L(\hat{v}) &= \begin{cases}
         \hat{v}_1\hat{v}_2\cdots\hat{v}_{k-1} & \text{if } \hat{v}_0 = \dBvec \\
        0 & \text{otherwise}
    \end{cases} \\
    \hat{\theta}_R(\hat{v}) &= \begin{cases}
         \hat{v}_0 \hat{v}_1 \cdots \hat{v}_{k-2} & \text{if } \hat{v}_{k-1} = \dBvec \\
        0 & \text{otherwise}
    \end{cases}
\end{align}

The adjoints of the Fourier deletion operators, $\hat{\theta}_L^*, \hat{\theta}_R^*$, are the same as $\theta_L^*, \theta_R^*$, respectively, except without the $\frac{1}{\sqrt{\cA}}$ normalization as this is absorbed by the Hadamard / Fourier transform.

\paragraph{Graph theory operators on Fourier basis}

All of the operators given in the prior section can be defined using the Fourier deletion operators as well. For example,
\begin{align*}
    \hat{A} &= (\hat{\theta}_R^* \hat{\theta}_L + \hat{\theta}_L^* \hat{\theta}_R) \\
            &= (F\theta_R^*F^{-1})(F\theta_L F^{-1}) + (F\theta_L^*F^{-1})(F\theta_R F^{-1}) \\
            &= F(\theta_R^*\theta_L + \theta_L^*\theta_R)F^{-1} \\
            &= FAF^{-1}.
\end{align*}
The operators $\hat{\inc}, \hLv, \hLe$ are defined analogously:
\begin{align*}
    \hat{\inc} = (\hatT_L - \hatT_R), \\
    \hat{\inc}^* = (\hat{\theta}_L^* - \hat{\theta}_R^*),\\
    \hLv = \hat{\inc}\hat{\inc}^* = 2I - \hat{A}, \\
    \hLe = \hat{\inc}^* \hat{\inc} = (\hat{\theta}_L^* \hatT_L + \hat{\theta}_R^* \hatT_R) - \hat{A}.
\end{align*}

\paragraph{Fourier words}
We define a key tensor space that we will use throughout the manuscript.
It is given by the intersection of the kernels of the incidence operators of \dB graphs and denoted by $X_\cA^{k} \subset V_\cA^k$.
Formally,
\begin{equation}
    X_\cA^k = \text{ker}(\hat{\theta}_L) \cap \text{ker}(\hat{\theta}_R).
\end{equation}
We call the members of $X_\cA^k$, \textit{Fourier words}.
Notationally, we will refer to elements of $X_\cA^k$ by the variable $\hat{x}$.
Recalling the definitions of $\hat{\theta}_L, \hat{\theta}_R$ we see that any $\hat{x} = \hat{x}_0 \hat{x}_1 \cdots \hat{x}_{k-1} \in X_\cA^k$ if $\hat{x}_0, \hat{x}_{k-1} \neq \dBvec$. 

 \section{Main results}
\label{sec:main_results}

\begin{figure}
    \centering
\begin{tikzpicture}[baseline=0cm, scale=0.25, transform shape]
    \matrix [mat1] (m)  
    {
    & aaa & aab & aba & baa & abb & bba & bab & bbb \\
    aaa & 2 & -1 & 0 & -1 & 0 & 0 & 0 & 0 \\
    aab & -1 & 4 & -1 & -1 & -1 & 0 & 0 & 0 \\
    aba & 0 & -1 & 4 & -1 & 0 & 0 & -2 & 0 \\
    baa & -1 & -1 & -1 & 4 & 0 & -1 & 0 & 0 \\
    abb & 0 & -1 & 0 & 0 & 4 & -1 & -1 & -1 \\
    bba & 0 & 0 & 0 & -1 & -1 & 4 & -1 & -1 \\
    bab & 0 & 0 & -2 & 0 & -1 & -1 & 4 & 0 \\
    bbb & 0 & 0 & 0 & 0 & -1 & -1 & 0 & 2 \\
    };
\end{tikzpicture}\qquad
\begin{tikzpicture}[baseline=0cm, scale=0.25, transform shape]
    \matrix [mat1] (m)  
    {
    & \dBvec\dBvec\dBvec & \dBvec\dBvec \hat{b} & \dBvec \hat{b} \dBvec & \hat{b} \dBvec \dBvec & \dBvec \hat{b}\hat{b} & \hat{b}\hat{b} \dBvec & \hat{b} \dBvec \hat{b} & \hat{b}\hat{b}\hat{b} \\
    \dBvec\dBvec\dBvec & 0 & 0 & 0 & 0 & 0 & 0 & 0 & 0 \\
    \dBvec\dBvec \hat{b} & 0 & 2 & -1 & 0 & 0 & 0 & 0 & 0 \\
    \dBvec \hat{b} \dBvec & 0 & -1 & 2 & -1 & 0 & 0 & 0 & 0 \\
    \hat{b} \dBvec \dBvec & 0 & 0 & -1 & 2 & 0 & 0 & 0 & 0 \\
    \dBvec \hat{b}\hat{b} & 0 & 0 & 0 & 0 & 2 & -1 & 0 & 0 \\
    \hat{b}\hat{b} \dBvec & 0 & 0 & 0 & 0 & -1 & 2 & 0 & 0 \\
    \hat{b} \dBvec \hat{b} & 0 & 0 & 0 & 0 & 0 & 0 & 2 & 0 \\
    \hat{b}\hat{b}\hat{b} & 0 & 0 & 0 & 0 & 0 & 0 & 0 & 2 \\
    };
\mymatrixbox{3}{3}{5}{5}
\mymatrixbox[red]{6}{6}{7}{7}
\mymatrixbox[darkspringgreen]{8}{8}{8}{8}
\mymatrixbox[darkspringgreen]{9}{9}{9}{9}
\end{tikzpicture}

    \caption{$\Lambda_V: \cV_\cA^3 \to \cV_\cA^3$ \textit{(left)} and $\hLv: \cV_\cA^3 \to \cV_\cA^3$ \textit{(right)} for $G_4$, e.g. $\cA = \{a, b\}$ with respective Fourier basis given by $\{\dBvec, \hat{b} \}, |\cA| = 2, k = 4$. 
    The Laplacian in native space, $\Lambda_V$, has no clear structure, whereas in Fourier space, $\hLv$, it decomposes into block matrices for each Fourier word.
    We highlight block $n \times n$ tri-diagonal Toeplitz matrices within $\hLv$, with $n=3$ in blue, $n=2$ in red, and $n=1$ in green.}
    \label{fig:example_vertex_laplacian}
\end{figure}

We now proceed to derive the eigenvectors of $\hLv, \hLe$.

\subsection{Eigenvectors of the vertex and edge Laplacians}

In this section we work with a de Bruijn graph of order $k$, $G_k$. First, we state a useful result and refer the reader to the proof given by \citet{tridiagonal}.

\begin{lemma}[Eigenvectors and eigenvalues of an $n \times n$ Topelitz matrix \citep{tridiagonal}]
\label{lem:tridiagonal}
The $n \times n$ tri-diagonal Toeplitz matrix takes the form:
$$
\begin{pmatrix}
    \delta & \tau & & & & \\
    \sigma & \delta & \tau & & \text{\huge0} & \\
    & \sigma & \delta & \tau & & \\
    & & \sigma & \delta & \tau & \\
    & \text{\huge0} & & \sigma & \delta & \tau \\
    & & & & \sigma & \delta 
\end{pmatrix} \in \R^{n \times n}
$$

The eigenvalues and eigenvectors of the $n \times n$ tri-diagonal Toeplitz matrix are well known.
The eigenvalues are given by
\begin{equation}
    \lambda_h = \delta + 2\sqrt{\sigma \tau} \cos \frac{h\pi}{n+1} \ \forall \ h \in \{1, \cdots, n\}
\end{equation}
and, for $\sigma\tau \neq 0$, the right-eigenvectors for each $h \in \{1, \cdots, n\}$ are given by
\begin{align}
    x_h &= [x_{h, 1}, x_{h, 2}, \cdots, x_{h, n}]^T \\
    x_{h, k} &= (\sigma/\tau)^{k/2} \sin \frac{hk\pi}{n+1} \ \forall \ k \in \{1, \cdots, n\}.
\end{align}
\end{lemma}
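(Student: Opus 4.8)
The plan is to read the eigenvalue equation $(T - \lambda I)x = 0$ componentwise and recognize it as a second-order linear recurrence. Writing out the $k$-th row of $T$ for an eigenpair $(\lambda, x)$ gives, at every interior index,
\[
\sigma\, x_{k-1} + (\delta - \lambda)\, x_k + \tau\, x_{k+1} = 0,
\]
and the first and last rows are subsumed by this same relation once we adopt the boundary convention $x_0 = x_{n+1} = 0$. Thus the task reduces to determining for which $\lambda$ the recurrence admits a nonzero solution respecting both boundary conditions, and then reading off that solution.

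Next I would solve this constant-coefficient recurrence via its characteristic equation $\tau r^2 + (\delta - \lambda) r + \sigma = 0$, whose roots $r_1, r_2$ satisfy $r_1 r_2 = \sigma/\tau$ and $r_1 + r_2 = (\lambda - \delta)/\tau$. Since $\sigma\tau \neq 0$ the roots are nonzero, and writing them as $r_{1,2} = \rho\, e^{\pm i\phi}$ with $\rho = \sqrt{\sigma/\tau}$ exposes the structure cleanly. The general solution is $x_k = A r_1^k + B r_2^k$; the condition $x_0 = 0$ forces $B = -A$, so after collecting the conjugate exponentials one gets $x_k \propto \rho^k \sin(k\phi)$.

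The remaining boundary condition $x_{n+1} = 0$ then becomes the quantization condition $\sin\!\big((n+1)\phi\big) = 0$, i.e.\ $\phi = h\pi/(n+1)$ for $h \in \{1,\dots,n\}$, with $h=0$ and $h=n+1$ discarded as they produce the zero vector. Substituting back yields $x_{h,k} = (\sigma/\tau)^{k/2}\sin\frac{hk\pi}{n+1}$, exactly as claimed, and reading the eigenvalue off from $r_1 + r_2 = 2\rho\cos\phi$ gives $\lambda_h = \delta + 2\tau\rho\cos\phi = \delta + 2\sqrt{\sigma\tau}\,\cos\frac{h\pi}{n+1}$.

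The step demanding the most care is confirming that these $n$ values of $h$ furnish a genuine and complete eigenbasis rather than degenerate or spurious solutions. One must check that the characteristic roots are distinct for each admissible $\phi$ (they coincide only when $\cos\phi = \pm 1$, precisely at the excluded endpoints), so that the $\rho^k\sin(k\phi)$ form is valid, and that the $\lambda_h$ are pairwise distinct so the eigenvectors span. A clean way to secure both is to observe that the diagonal similarity $D^{-1} T D$ with $D = \mathrm{diag}(1, \rho, \rho^2, \dots, \rho^{n-1})$ symmetrizes $T$ whenever $\sigma\tau > 0$, turning it into a symmetric tridiagonal matrix with off-diagonal entries $\sqrt{\sigma\tau}$; this guarantees real eigenvalues and a full eigenbasis, which then transfers back through $D$. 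I would defer the fully detailed verification to the cited source \citep{tridiagonal}, since it is only the explicit closed forms for $\lambda_h$ and $x_{h,k}$ that the subsequent arguments require.
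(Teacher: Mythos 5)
Your proposal is correct, but note that the paper itself contains no proof of this lemma: it is stated as a known result, with the proof deferred entirely to the cited reference \citep{tridiagonal}. What you have written is the standard self-contained derivation (essentially the argument of the cited source): reading $(T-\lambda I)x=0$ as the three-term recurrence $\sigma x_{k-1}+(\delta-\lambda)x_k+\tau x_{k+1}=0$ with the boundary convention $x_0=x_{n+1}=0$, solving via the characteristic equation $\tau r^2+(\delta-\lambda)r+\sigma=0$, and imposing the quantization condition $\sin\big((n+1)\phi\big)=0$. Your treatment of the two delicate points is also sound: the degenerate-root case $r_1=r_2$ occurs only at $\cos\phi=\pm1$, i.e.\ at the excluded indices $h=0$ and $h=n+1$, where the general solution $(A+Bk)r^k$ with $x_0=x_{n+1}=0$ forces $x\equiv 0$; and completeness follows either from the $n$ pairwise-distinct values $\cos\frac{h\pi}{n+1}$ (so the $\lambda_h$ are distinct and the corresponding eigenvectors are automatically independent) or, when $\sigma\tau>0$, from the diagonal symmetrization $D^{-1}TD$. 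The one caveat you could make explicit is the case $\sigma\tau<0$ (or complex entries), where $\sqrt{\sigma\tau}$ and $(\sigma/\tau)^{k/2}$ require a fixed branch choice and the eigenvalues of the real matrix $T$ are genuinely complex; the algebra goes through verbatim, and this matches the generality of the statement. In short, your proof fills in precisely what the paper outsources, at the cost of a page of routine verification that the authors chose to avoid by citation.
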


Before stating our main result, we provide a motivating example to show how the Fourier transform, taking $\Lv \to \hLv$, tri-diagonalizes the vertex Laplacian of $G_k$.
We will then generalize this to derive the eigenvectors of $\hLv, \hLe$.

\begin{example}
Consider, once again, the \dB graph for the case $\cA = \{a, b\}, k = 4$ given by $G_4$.
We denote the Fourier basis for $\cA$ by $\hat{a} = \dBvec$ and $\hat{b}$ by the orthogonal basis vector $\frac{1}{\sqrt{2}}\begin{pmatrix}
    1 & -1
\end{pmatrix}^T$.
The nodes of $G_4$ are given by $3$-mers with tensor basis elements $\dBvec\dBvec\dBvec, \dBvec\dBvec \hat{b}, \dBvec \hat{b} \dBvec, \hat{b} \dBvec\dBvec, \dBvec \hat{b}\hat{b}, \hat{b}\hat{b}\dBvec, \hat{b}\dBvec \hat{b}, \hat{b}\hat{b}\hat{b}$.
We compute the vertex Laplacian, $\hLv$, for this example in Figure \ref{fig:example_vertex_laplacian}.

Immediately, we see that the vertex Laplacian can be decomposed into tri-diagonal Toeplitz block matrices along the diagonal of $\hLv$. 
So the eigenvectors of $\hLv$ are given by the eigenvectors of each of these block matrices.

As given in Lemma \ref{lem:tridiagonal}, we obtain the following closed-form expressions for the eigenvectors of the $3 \times 3$ block in this example, in terms of the tensor basis elements for $G_4$:
\begin{align*}
    x_1 &= (\sin\dfrac{\pi}{4}) (\dBvec\dBvec \hat{b}) + (\sin\dfrac{2\pi}{4}) (\dBvec \hat{b} \dBvec) + (\sin\dfrac{3\pi}{4}) (\hat{b} \dBvec\dBvec) \\
    x_2 &= (\sin\dfrac{2\pi}{4}) (\dBvec\dBvec \hat{b}) + (\sin\dfrac{4\pi}{4}) (\dBvec \hat{b} \dBvec) + (\sin\dfrac{6\pi}{4}) (\hat{b} \dBvec\dBvec) \\
    x_3 &= (\sin\dfrac{3\pi}{4}) (\dBvec\dBvec \hat{b}) + (\sin\dfrac{6\pi}{4}) (\dBvec \hat{b} \dBvec) + (\sin\dfrac{9\pi}{4}) (\hat{b} \dBvec\dBvec)
\end{align*}

with eigenvalues:
\begin{align*}
    \lambda_1 &= 1 + 2\cos\frac{\pi}{4}, \quad \lambda_2 = 1 + 2\cos\frac{2\pi}{4}, \quad \lambda_3 = 1 + 2\cos\frac{3\pi}{4}.
\end{align*}

The eigenvectors and eigenvalues of the $2 \times 2$ block in this example are similarly given by:
\begin{align*}
    x_1 &= (\sin\dfrac{\pi}{3})(\dBvec \hat{b}\hat{b}) + (\sin\dfrac{2\pi}{3})(\hat{b}\hat{b}\dBvec),  &&\lambda_1 = 1 + 2\cos\dfrac{\pi}{3}\\
    x_2 &= (\sin\dfrac{2\pi}{3})(\dBvec\hat{b}\hat{b}) + (\sin\dfrac{4\pi}{3})(\hat{b}\hat{b}\dBvec),  &&\lambda_2 = 1 + 2\cos\dfrac{2\pi}{3}
\end{align*}
where we abuse notation by denoting these eigenvectors as $x_1, x_2$ again, namely to emphasize that each separate tri-diagonal block is modeled by the equations in Lemma \ref{lem:tridiagonal}.
However, these are not the same eigenvectors as obtained from the $3 \times 3$ block given above.

Finally, there are two eigenvectors for the $1 \times 1$ blocks given by,
\begin{align*}
    x_1 &= (\sin\dfrac{\pi}{2}) (\hat{b}\dBvec\hat{b}), &&\lambda_1 = 1 + 2\cos\dfrac{\pi}{2} \\
    x_1 &= (\sin\dfrac{\pi}{2}) (\hat{b}\hat{b}\hat{b}), &&\lambda_1 = 1 + 2\cos\dfrac{\pi}{2}.
\end{align*}

\end{example}
We see that the eigenvectors for each tridiagonal block are obtained from the elements of $\emptyset = X_\cA^0$, $\{\hat{b}\} = X_\cA^1$ and $\{\hat{b}\hat{b}\} = X_\cA^2$ being ``padded out" by the element $\dBvec$ three, two, and one time(s), respectively.
Note that this ``padding" operation corresponds to all the possible ways in which a $3$-mer can be obtained from elements of $X_\cA^i$ with $i < 3$.
This intuition will be important going forward, and we will formalize this padding operation in later sections.

We now state our main result, and provide some explicitly computed examples derived from it in \ref{app:sample_bases}.
\begin{theorem}[Eigenvectors of $\hLv, \hLe$]
\label{thm:eigs_lv_le}
Let $\hat{x} \in X_\cA^i$ for any $i \in \{1, \cdots, k\}$, and $j \in \Z_{\geq 0}$ such that $i + j = k$.

For each $\hat{x}$, we obtain a $(j+1) \times (j+1)$ block, tri-diagonal Toeplitz matrix in $\hLv: V_\cA^k \to V_\cA^k$.
We have a closed-form expression for the $j+1$ eigenvectors of each block, and thus the eigenvectors of $\hLv$ itself, indexed as $x_h, h \in \{ 1, \ldots, j+1 \}$.

These are given by,
\begin{equation}
    x_{h} = \sum_{\ell=1}^{j+1} \sin\bigg(\frac{\ell h\pi}{j+2}\bigg)(\hatT_L^*)^{j+1-\ell}(\hatT_R^*)^{\ell-1}\hat{x}
\end{equation}
and corresponding eigenvalue $\lambda_h = 2 - 2\cos\big( \frac{h\pi}{j+2} \big)$.
There is one eigenvector in the kernel of $\hLv$ given by $\dBvec$ with eigenvalue $0$ (this would correspond to the case of $\hat{x} \in X_\cA^0 = \emptyset$). 
Note that for any $\hat{x} \in X_\cA^k$ we get that $j=0$, therefore in such cases $h = 1$ and $x_1 = \hat{x}$ with eigenvalue $\lambda_1 = 2$.

For each $\hat{x}$, we obtain $j+2$ eigenvectors of $\hLe: V_\cA^{k+1} \to V_\cA^{k+1}$, indexed as $v_h, h \in \{ 0, 1, \ldots, j+1 \}$.

These are given by,
\begin{equation}
    v_{h} = \sum_{\ell=0}^{j+1} \cos\bigg( \frac{(2\ell+1)h\pi}{2(j+2)} \bigg) (\hatT_L^*)^{j+1-\ell}(\hatT_R^*)^{\ell} \hat{x}
\end{equation}
and the same, corresponding eigenvalue $\lambda_h = (2 - 2\cos\big(\frac{h\pi}{j+2}\big))$.
Note the following:
\begin{itemize}
    \item for $h=0, \lambda_0 = 0, v_0 \neq 0$, and so $v_0$ is in the kernel of $\hLe$
    \item in showing the above, we first prove that $v_0$ is in the kernel of $\hat{\inc}$
    \item we know $\dBvec \in \text{ker}(\hLe)$ and by definition, for each $\hat{x} \in X_\cA^{k+1}$, $\hat{x} \in \text{ker}(\hLe)$ and $\hat{x} \in \text{ker}(\hat{\inc})$.
\end{itemize}
\end{theorem}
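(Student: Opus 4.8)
The plan is to use the tensor/padding structure directly. In the Fourier tensor basis of $V_\cA^k$ every basis tensor decomposes uniquely as a ``core'' $\hat{x}\in X_\cA^i$ (the segment between its first and last non-$\dBvec$ letters) flanked by copies of $\dBvec$, the all-$\dBvec$ tensor being the single exception. So it suffices to fix one pure-tensor core $\hat{x}\in X_\cA^i$ with $i+j=k$ and analyze $\hLv$ on the span of its paddings $w_\ell := (\hatT_L^*)^{j+1-\ell}(\hatT_R^*)^{\ell-1}\hat{x} = \dBvec^{\,j+1-\ell}\hat{x}\,\dBvec^{\,\ell-1}$ for $\ell\in\{1,\dots,j+1\}$, which are $j+1$ orthonormal tensors. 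Writing $\hLv = 2I-\hat{A}$, I would first record the identities $\hatT_L\hatT_L^* = \hatT_R\hatT_R^* = I$ (pad then delete), then show $\hatT_R^*\hatT_L w_\ell = w_{\ell+1}$ and $\hatT_L^*\hatT_R w_\ell = w_{\ell-1}$, giving $\hLv w_\ell = 2w_\ell - w_{\ell-1} - w_{\ell+1}$ with $w_0 = w_{j+2} = 0$. Each block is thus the tridiagonal Toeplitz matrix $\mathrm{tridiag}(-1,2,-1)$ of order $j+1$, and Lemma~\ref{lem:tridiagonal} with $\delta=2$, $\sigma=\tau=-1$ (so $(\sigma/\tau)^{\ell/2}=1$) produces exactly $x_h = \sum_\ell \sin\frac{\ell h\pi}{j+2} w_\ell$ with $\lambda_h = 2-2\cos\frac{h\pi}{j+2}$; this also follows in one line from the recurrence via $\sin\frac{(m+1)h\pi}{j+2}+\sin\frac{(m-1)h\pi}{j+2}=2\cos\frac{h\pi}{j+2}\sin\frac{mh\pi}{j+2}$, the endpoints holding since $\sin 0 = \sin(h\pi)=0$. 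The degenerate cases drop out at once: $\hat{\inc}^*\dBvec^k=0$ puts the all-$\dBvec$ tensor in $\ker\hLv$, and for $j=0$ the block is $1\times 1$ with $\hLv\hat{x}=2\hat{x}$.

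The step requiring the most care, and the crux of the argument, is the boundary behavior of these two recurrences. They truncate correctly precisely because $\hat{x}\in X_\cA^i$ forces $\hat{x}_0,\hat{x}_{i-1}\neq\dBvec$: at $\ell=j+1$ there is no leading $\dBvec$ for $\hatT_L$ to remove, so $\hatT_L w_{j+1}=0$, and at $\ell=1$ there is no trailing $\dBvec$, so $\hatT_R w_1=0$. These vanishings are exactly what make the blocks finite and impose the Dirichlet endpoints $w_0=w_{j+2}=0$ responsible for the sine eigenvectors. Essentially all the content is in pinning down these endpoint cases and the index bookkeeping; the interior entries are a mechanical consequence of the pad-and-delete rules.

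For $\hLe:V_\cA^{k+1}\to V_\cA^{k+1}$ I would run the same construction one length longer, with $u_\ell := (\hatT_L^*)^{j+1-\ell}(\hatT_R^*)^{\ell}\hat{x} = \dBvec^{\,j+1-\ell}\hat{x}\,\dBvec^{\,\ell}$ for $\ell\in\{0,\dots,j+1\}$, giving $j+2$ orthonormal tensors. The essential difference is that $\hLe=\hat{\inc}^*\hat{\inc}$ is built from $\hatT_L^*\hatT_L$ and $\hatT_R^*\hatT_R$, which are \emph{projections} onto tensors beginning (resp.\ ending) in $\dBvec$, not the identity. Consequently the diagonal equals $2$ for interior $\ell$ but drops to $1$ at $\ell\in\{0,j+1\}$, where one projection annihilates the endpoint because $\hat{x}_0,\hat{x}_{i-1}\neq\dBvec$, while the off-diagonals again contribute $-u_{\ell\pm 1}$. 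The resulting $(j+2)\times(j+2)$ matrix is therefore \emph{not} Toeplitz but the path-graph (Neumann) Laplacian, and this replacement of Dirichlet by Neumann endpoints is exactly what converts the sines into the cosines $\cos\frac{(2\ell+1)h\pi}{2(j+2)}$ of the statement.

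Rather than re-derive this spectrum from scratch, the cleanest route is to intertwine the two Laplacians. Because $\hLv=\hat{\inc}\hat{\inc}^*$ and $\hLe=\hat{\inc}^*\hat{\inc}$, for each $x_h$ with $\lambda_h\neq 0$ the vector $\hat{\inc}^* x_h$ is automatically a nonzero $\hLe$-eigenvector of the same eigenvalue, since $\hLe\hat{\inc}^* x_h = \hat{\inc}^*\hLv x_h = \lambda_h\hat{\inc}^* x_h$. Computing $\hat{\inc}^* w_\ell = u_{\ell-1}-u_\ell$ and applying $\sin\frac{(m+1)h\pi}{j+2}-\sin\frac{mh\pi}{j+2}=2\sin\frac{h\pi}{2(j+2)}\cos\frac{(2m+1)h\pi}{2(j+2)}$ (the two endpoint coefficients matching this formula after a double-angle reduction at $m=0$ and a $\cos(h\pi-\cdot)$ reduction at $m=j+1$) yields $\hat{\inc}^* x_h = 2\sin\frac{h\pi}{2(j+2)}\,v_h$, identifying each $v_h$, $h\in\{1,\dots,j+1\}$, as the claimed eigenvector with eigenvalue $\lambda_h$. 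Finally the $h=0$ vector $v_0=\sum_\ell u_\ell$ (where $\cos 0 = 1$) is handled separately: I would verify $\hat{\inc}v_0=0$ directly by computing $\hat{\inc}u_\ell = \hatT_L u_\ell - \hatT_R u_\ell$ and telescoping, so that $v_0\in\ker\hat{\inc}\subseteq\ker\hLe$ with eigenvalue $0$. The same kernel computation records the stated facts that $\dBvec\in\ker\hLe$ and that every length-$(k+1)$ Fourier word $\hat{x}\in X_\cA^{k+1}$ satisfies $\hat{x}\in\ker\hat{\inc}\subseteq\ker\hLe$.
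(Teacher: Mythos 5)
Your proposal is correct, and for the vertex Laplacian it takes a genuinely different (and more structural) route than the paper. The paper's proof never actually establishes the block decomposition asserted in the theorem statement: it takes the formula for $x_h$ as given and verifies $\hLv x_h = \lambda_h x_h$ by computing $\hat{\inc}^* x_h$ and then $\hat{\inc}\hat{\inc}^* x_h$ through a chain of product-to-sum trigonometric identities. You instead prove the block structure first --- the partition of the Fourier basis of $V_\cA^k$ into cores $\hat{x} \in X_\cA^i$ with $\dBvec$-paddings $w_\ell$, the pad/delete identities $\hatT_R^*\hatT_L w_\ell = w_{\ell+1}$ and $\hatT_L^*\hatT_R w_\ell = w_{\ell-1}$, and the truncation $\hatT_L w_{j+1} = \hatT_R w_1 = 0$ forced by $\hat{x}_0, \hat{x}_{i-1} \neq \dBvec$ --- and then read off the spectrum. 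This buys two things the paper's proof does not give: an actual proof of the ``$(j+1)\times(j+1)$ tri-diagonal Toeplitz block'' claim (the paper only illustrates it by example before the theorem), and a conceptual explanation of why $\hLv$ produces sines (Dirichlet endpoints) while $\hLe$ produces cosines (its block is the Neumann path-graph Laplacian, because $\hatT_L^*\hatT_L$ and $\hatT_R^*\hatT_R$ are projections rather than the identity). One caution on your citation of Lemma \ref{lem:tridiagonal}: with $\sigma=\tau=-1$ and the principal branch $\sqrt{\sigma\tau}=+1$, the lemma's formula pairs the sine-$h$ eigenvector with $2+2\cos\frac{h\pi}{j+2}$, which is the re-indexed eigenvalue (the paper remarks on exactly this $h \mapsto j+2-h$ relabeling at the end of its computation); your one-line recurrence check
\begin{equation*}
    \sin\frac{(m+1)h\pi}{j+2}+\sin\frac{(m-1)h\pi}{j+2}=2\cos\frac{h\pi}{j+2}\sin\frac{mh\pi}{j+2}
\end{equation*}
settles the pairing correctly, so that verification should be kept rather than the bare appeal to the lemma. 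For the edge Laplacian your argument coincides with the paper's: both intertwine via $\hLe\,\hat{\inc}^* x_h = \hat{\inc}^*\hLv x_h$, both compute $\hat{\inc}^* x_h = 2\sin\bigl(\frac{h\pi}{2(j+2)}\bigr) v_h$ by the same sum-to-product identity (your endpoint reductions at $m=0$ and $m=j+1$ are exactly the paper's), and both dispose of $h=0$ by a telescoping computation showing $\hat{\inc}\, v_0 = 0$, which also yields the stated fact that $v_0$ lies in the kernel of the directed incidence operator.
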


\begin{proof}
    Let $x_h$ be as defined above with corresponding eigenvalue $\lambda_h = 2-2\cos(\frac{h\pi}{j+2})$ for some $\hat{x} \in X_\cA^i$. 
    We first compute $\inc^* x_h$, as this will be useful later. 
    For notational simplicity, we will let $\beta = \frac{h\pi}{j+2}$.

    \begin{align*}
        \hat{\inc}^* x_h &= (\hatT_L^* - \hatT_R^*) x_h 
        = (\hatT_L^* x_h - \hatT_R^* x_h) \nonumber \\
        &= \Bigg[\sum_{\ell=1}^{j+1} \sin(\beta \ell) (\hatT_L^*)^{j+2-\ell}(\hatT_R^*)^{\ell-1} \ \hat{x} - \sum_{\ell=1}^{j+1} \sin(\beta \ell) (\hatT_L^*)^{j+1-\ell}(\hatT_R^*)^{\ell} \ \hat{x} \Bigg] \nonumber \\
        &= \Bigg[ \sin\beta (\hatT_L^*)^{j+1} + \sum_{\ell=1}^{j} \Big[\sin(\beta (\ell+1)) - \sin(\beta \ell) \Big] (\hatT_L^*)^{j+1-\ell}(\hatT_R^*)^{\ell} - \sin(\beta (j+1)) (\hatT_R^*)^{j+1} \Bigg]\hat{x}. \nonumber
    \end{align*}
    Utilizing that $\sin(\phi_1) - \sin(\phi_2) = 2\cos(\frac{\phi_1 + \phi_2}{2})\sin(\frac{\phi_1 - \phi_2}{2})$, the inner terms become
    \begin{align*}
        \sin(\beta(\ell+1)) - \sin(\beta \ell) &= 2\cos(\frac{(2\ell + 1)\beta}{2})\sin(\frac{\beta}{2}).
    \end{align*} 
    Additionally, using $\sin(\phi) = 2\sin(\frac\phi2)\cos(\frac\phi2)$, we can simplify $\inc^* x_h =$
    \begin{align*}
        &\Bigg[ \sin(\beta) (\hatT_L^*)^{j+1} + 2\sin(\frac{\beta}{2}) \sum_{\ell=1}^{j} \cos\bigg( \frac{(2\ell+1)\beta}{2} \bigg) (\hatT_L^*)^{j+1-\ell}(\hatT_R^*)^{\ell} - \sin(\beta (j+1)) (\hatT_R^*)^{j+1} \Bigg]\hat{x} \\
        &= \sbrac{2\sin(\frac{\beta}{2})\sum_{\ell=0}^{j}\cos(\frac{(2\ell+1)\beta}{2})(\hatT_L^*)^{j+1-\ell}(\hatT_R^*)^{\ell} - \sin(\beta (j+1)) (\hatT_R^*)^{j+1}} \hat{x}\\
        &=(2\sin(\frac{\beta}{2})) \sbrac{\sum_{\ell=0}^{j+1}\cos(\frac{(2\ell+1)\beta}{2})(\hatT_L^*)^{j+1-\ell}(\hatT_R^*)^{\ell}} \hat{x}
    \end{align*}
    where the last step comes from,
    \begin{align*}
    -\sin(\beta (j+1) ) = -\sin(\beta (j+2-1) ) &= -\sin(h\pi-\beta) \\
    =(-1)^h\sin(\beta)=(-1)^h2\sin(\frac\beta2)\cos(\beta/2) &=2\sin(\frac\beta2)\cos(h\pi-\frac\beta2) \\
    = 2\sin(\frac\beta2)\cos((2(j+1)+1)\frac\beta2).
    \end{align*}
    Now we have that,
        \begin{align*}
            \hLv(x_h) &= && (\hatT_L - \hatT_R)\hat{\inc}^* x_h & &&\\
            &= && 2\sin\frac\beta2 \Big[ \cos\bigg(\beta\frac{2(j+1)+1}{2}\bigg)(\hatT_L)(\hatT_R^*)^{j+1} + \sum_{\ell=0}^{j} \cos\bigg(\beta\frac{2\ell+1}{2}\bigg)(\hatT_L^*)^{j-\ell}(\hatT_R^*)^{\ell} & &&\\
            & &&- \cos\frac\beta2 (\hatT_R)(\hatT_L^*)^{j+1} - \sum_{\ell=1}^{j+1} \cos\bigg(\beta\frac{2\ell+1}{2}\bigg) (\hatT_L^*)^{j+1-\ell}(\hatT_R^*)^{\ell-1} \Big] \hat{x} & &&\\
            &= && 2\sin\frac\beta2 \Big[ \sum_{\ell=1}^{j+1} \bigg( \cos\big(\beta\frac{2\ell-1}{2} \big) - \cos\big(\beta\frac{2\ell+1}{2}\big) \bigg) (\hatT_L^*)^{j+1-\ell}(\hatT_R^*)^{\ell-1} \Big]\hat{x} & &&
        \end{align*}
    where the last step comes from: (1) $\hatT_R, \hatT_L^*$ commute, as do $\hatT_L, \hatT_R^*$, and $\hatT_R\hat{x} = \hatT_L\hat{x} = 0$; (2) re-indexing $\sum_{\ell=0}^{j}$ as $\sum_{\ell=1}^{j+1}$. 
    Now, using that $\cos\phi_1 - \cos\phi_2 = -2\sin(\frac{\phi_1 + \phi_2}{2})\sin(\frac{\phi_1-\phi_2}{2})$:
    \begin{align*}
        \hLv(x_h) &= (2\sin\frac\beta2) \Big[ \sum_{\ell=1}^{j+1} -2\sin(\beta \ell)\sin\frac{-\beta}{2} (\hatT_L^*)^{j+1-\ell}(\hatT_R^*)^{\ell-1} \Big]\hat{x} \\
        &= (-4\sin\frac{\beta}{2}\sin\frac{-\beta}{2}) x_h \\
        &= \lambda_h x_h.
    \end{align*}
    Since $\frac{\beta}{2} = \frac{0 + \beta}{2}$ and $\frac{-\beta}{2} = \frac{0 - \beta}{2}$ we reuse the same identity to see that
    \begin{align*}
        \lambda_h &= 2(-2\sin\frac{\beta}{2}\sin\frac{-\beta}{2}) \\
        &= 2(\cos0 - \cos\beta) = 2(1 - \cos\beta)
    \end{align*}
    as desired. 
    Note here, the form of the eigenvalue as $\lambda_h = 2-2\cos\frac{h\pi}{j+2}$ is equivalent to re-indexing as $\lambda_{j+2-h} = 2 - 2\cos\frac{(j+2-h)\pi}{j+2} = 2-2\cos(\pi - \frac{h\pi}{j+2}) = 2 + 2\cos\frac{h\pi}{j+2}$, for $h \in \{1, \cdots, j+1\}$.

Now that we have established that $x_h, \lambda_h$ are eigenvectors and eigenvalues of $\hLv$, we notice that we can obtain a closed form expression for the eigenvectors of $\hLe$ as follows:
    \begin{align*}
        \hLv x_h &= \hat{\inc}\hat{\inc}^* x_h = \lambda_h x_h \\
        \hLe (\hat{\inc}^* x_h) &= \hat{\inc}^* \hat{\inc} (\hat{\inc}^* x_h) = \lambda_h (\hat{\inc}^* x_h).
    \end{align*}

    We have already calculated that
    \begin{align*}
        \hat{\inc}^*x_h &= 2\sin(\frac{\beta}{2}) \sum_{\ell=0}^{j+1} \cos(\frac{(2\ell + 1)\beta}{2}) (\hatT_L^*)^{j+1-\ell} (\hatT_R^*)^{\ell} \hat{x} \\
        &= 2\sin(\frac{\beta}{2}) v_h.
    \end{align*}

    Plugging in as described above, we see that
    \begin{align*}
        \hLe (\hat{\inc}^* x_h) &= \lambda_h (\hat{\inc}^* x_h) \\
        \Rightarrow 2\sin(\frac{\beta}{2}) \hLe v_h &= 2\sin(\frac{\beta}{2}) \lambda_h v_h \\
        \Rightarrow \hLe v_h &= \lambda_h v_h.
    \end{align*}
    Note that we established $\hLv x_h = \lambda_h x_h$ for $h \in \{1, 2, \cdots, j+1\}$ and so we also have that $\hLe v_h = \lambda_h v_h$ for the same values of $h$.
    For the case of $h=0$, we immediately observe that $\lambda_h = 0$.
    With regard to $\hLv$ we have that $x_0$ corresponds to the tensor with all zeros since $h=0$ will cause $\sin(\frac{\ell h\pi}{j+2})$ to be zero for any $\ell$, and so this trivially says that $\hLv (0) = 0$.
    However, we see that $v_0$ takes on the following form,
    \begin{align*}
        v_0 &= \sum_{\ell=0}^{j+1} \cos(\frac{(2\ell + 1)\cdot 0}{2(j+2)}) (\hatT_L^*)^{j+1-\ell} (\hatT_R^*)^{\ell} \hat{x} \\
        &= \sum_{\ell=0}^{j+1} (\hatT_L^*)^{j+1-\ell} (\hatT_R^*)^{\ell} \hat{x}
    \end{align*}
    which is not a zero-tensor. By simple calculation,
\begin{align*}
        \hLe v_0 &=  \hat{\inc}^* \hat{\inc} v_0 \\
        &=  \hat{\inc}^* \hat{\inc} \sum_{\ell=0}^{j+1} (\hatT_L^*)^{j+1-\ell} (\hatT_R^*)^{\ell} \hat{x} \\
        &= \hat{\inc}^* (\hatT_L - \hatT_R)\sum_{\ell=0}^{j+1} (\hatT_L^*)^{j+1-\ell} (\hatT_R^*)^{\ell} \hat{x} \\
        &= \hat{\inc}^* (\hatT_L - \hatT_R)[(\hatT_L^*)^{j+1}\hat{x} + (\hatT_R^*)^{j+1}\hat{x} + \sum_{\ell=1}^{j} (\hatT_L^*)^{j+1-\ell} (\hatT_R^*)^{\ell} \hat{x}] \\
        &= \hat{\inc}^*[ (\hatT_L^*)^j + \hatT_L (\hatT_R^*)^{j+1} + \sum_{\ell=1}^j (\hatT_L^*)^{j - \ell}(\hatT_R^*)^{\ell} \\ 
        &\quad -  \hatT_R (\hatT_L^*)^{j+1} - (\hatT_R^*)^{j} - \sum_{\ell=1}^j (\hatT_L^*)^{j+1-\ell} (\hatT_R^*)^{\ell-1} ] \hat{x} \\
        &= \hat{\inc}^*[ \hatT_L (\hatT_R^*)^{j+1} + \sum_{\ell = 1}^{j-1} (\hatT_L^*)^{j-\ell}(\hatT_R^*)^{\ell} - \hatT_R (\hatT_L^*)^{j+1} - \sum_{\ell=2}^j (\hatT_L^*)^{j+1-\ell}(\hatT_R^*)^{\ell-1} ] \hat{x} \\
        &= \hat{\inc}^*[ \hatT_L (\hatT_R^*)^{j+1} \hat{x} - \hatT_R (\hatT_L^*)^{j+1} \hat{x} ]
    \end{align*}
    where the last step comes from re-indexing summations to see that all of the inner terms cancel.
    Finally, we know that $\hatT_L$ commutes with $\thetaRt$, $\hatT_R$ commutes with $\thetaLt$, and $\hat{x} \in \text{ker}(\hatT_L) \cap \text{ker}(\hatT_R)$, resulting in:
    \begin{align*}
        \hLe v_0 &= \hat{\inc}^* ((\hatT_R^*)^{j+1} \hatT_L \hat{x} - (\hatT_L^*)^{j+1} \hatT_R \hat{x}) \\
        &=  \hat{\inc}^* (0) = 0.
    \end{align*}
    Therefore, $v_0 \neq 0$ and $\hLe v_0 = 0 = \lambda_0 v_0 \Rightarrow v_0 \in \text{ker}(\hLe)$.
    Note that in the steps above, we first showed that $\hat{\inc}v_0 = 0$, which means that $v_0$ is in the null space of the \textit{directed} incidence operator as well.
\end{proof}

\subsection{A basis for the cut space and cycle space}

Now that we have established the form of the eigenvectors for $\hLv$ and $\hLe$, we define an operator that will be useful going forward.

\begin{definition}[$\Xi^*_{j, h}$ operator]
We define the operator $\Xi^*_{j, h}: \cV_\cA^{k-j} \to \cV_\cA^k$ by,
\begin{align*}
    \Xi^*_{j, h}(v) = \frac{1}{\sqrt{j+1}} \sum_{\ell=0}^{j} \cos \Bigg( \frac{(2\ell+1)h\pi}{2(j+1)} \Bigg) (\thetaLt)^{j-\ell}(\thetaRt)^{\ell}(v)
\end{align*}
\end{definition}

The operator $\Xi^*_{j, h}$ works to ``pad out" words $v \in \cV_\cA^k$ with all possibilities of adding $j$ instances of $\dBvec$ to the left and right of $s$. 
The reason we define this operator in the adjoint form is to be notationally consistent with it's action via the adjoints $\theta_L^*, \theta_R^*$.
We provide an example to highlight this, and then show orthgonality of these operators.

\begin{example}[$\Xi^*_{j, 0}$ operator]
    Let $v \in V_\cA^k$ be any word and $|\cA| = q$. 
    Then $\Xi^*_{j,0}$ acts as follows:
    \begin{align*}
        \Xi^*_{1, 0}(v) &= \frac{1}{\sqrt{2}}(\dBvec v + v \dBvec) \\
        \Xi^*_{2, 0}(v) &= \frac{1}{\sqrt{3}}(\dBvec \dBvec v + \dBvec v \dBvec + v \dBvec \dBvec) \\
        \Xi^*_{3, 0}(v) &= \frac{1}{\sqrt{4}}(\dBvec \dBvec \dBvec v + \dBvec \dBvec v \dBvec + \dBvec v \dBvec \dBvec + v \dBvec \dBvec \dBvec) \\
        \cdots
    \end{align*}
\end{example}

\begin{lemma}[Orthogonality]
\label{lem:orthogonal}
For any $\hat{x} \in X_\cA^{k-j}, \hat{x}' \in X_\cA^{k-j'}$ with $0 \leq j, j' \leq k$, for all $h \in \{0, 1, \cdots, j+1\}, h' \in \{0, 1, \cdots, j'+1\}$, we have that $\Xi^*_{j,h}(\hat{x}) \perp \Xi_{j', h'}(\hat{x}')$ if: (1) $j \neq j'$; or (2) $j = j'$ and $\hat{x} \neq \hat{x}'$.

    \begin{proof}
        Let $\xi_j = \frac{1}{\sqrt{(j+1)}}$. Then we have,
        \begin{align*}
            &\langle \Xi^*_{j,h}(\hat{x}), \Xi^*_{j', h'}(\hat{x}') \rangle =\\
            &\quad \langle \xi_j \sum_{\ell=0}^j \cos \Bigg( \frac{(2\ell+1)h\pi}{2(j+2)} \Bigg)(\thetaLt)^{j-\ell}(\thetaRt)^\ell \hat{x}, \xi_{j'} \sum_{\ell'=0}^{j'} \cos \Bigg( \frac{(2\ell'+1)h'\pi}{2(j'+2)} \Bigg)(\thetaLt)^{j'-\ell'}(\thetaRt)^{\ell'} \hat{x}' \rangle \\
            &= \xi_j \xi_{j'} \sum_{\ell=0}^j \sum_{\ell'=0}^{j'} \langle \cos \Bigg( \frac{(2\ell+1)h\pi}{2(j+2)} \Bigg)  (\thetaLt)^{j-\ell}(\thetaRt)^\ell \hat{x}, \cos \Bigg( \frac{(2\ell'+1)h'\pi}{2(j'+2)} \Bigg) (\thetaLt)^{j'-\ell'}(\thetaRt)^{\ell'} \hat{x}' \rangle \\
            &= \xi_j \xi_{j'} \sum_{\ell=0}^j \cos \Bigg( \frac{(2\ell+1)h\pi}{2(j+2)} \Bigg) \sum_{\ell'=0}^{j'} \cos \Bigg( \frac{(2\ell'+1)h'\pi}{2(j'+2)} \Bigg) \langle  (\thetaLt)^{j-\ell}(\thetaRt)^\ell \hat{x}, (\thetaLt)^{j'-\ell'}(\thetaRt)^{\ell'} \hat{x}' \rangle.
        \end{align*}
    We then have the following cases:
    \begin{enumerate}
        \item $\ell < \ell'$:
        \begin{align*}
            \langle (\thetaLt)^{j-\ell}(\thetaRt)^\ell \hat{x}, (\thetaLt)^{j'-\ell'}(\thetaRt)^{\ell'} \hat{x}' \rangle &= \langle  (\thetaR)^{\ell'}(\thetaLt)^{j-\ell}(\thetaRt)^\ell \hat{x}, (\thetaLt)^{j'-\ell'} \hat{x}' \rangle \\
            &= \langle  (\thetaLt)^{j-\ell}(\thetaR)^{\ell'-\ell} \hat{x}, (\thetaLt)^{j'-\ell'} \hat{x}' \rangle \\
            &= \langle 0, (\thetaLt)^{j'-\ell'} \hat{x}' \rangle = 0
        \end{align*}

        \item $\ell > \ell'$:
        \begin{align*}
             \langle (\thetaLt)^{j-\ell}(\thetaRt)^\ell \hat{x}, (\thetaLt)^{j'-\ell'}(\thetaRt)^{\ell'} \hat{x}' \rangle &= \langle  (\thetaLt)^{j-\ell} \hat{x}, (\thetaR)^\ell(\thetaLt)^{j'-\ell'}(\thetaRt)^{\ell'} \hat{x}' \rangle \\
             &= \langle  (\thetaLt)^{j-\ell} \hat{x}, (\thetaLt)^{j'-\ell'}(\thetaR)^{\ell-\ell'} \hat{x}' \rangle \\
             &= \langle (\thetaLt)^{j-\ell} \hat{x}, 0 \rangle = 0
        \end{align*}

        \item $\ell = \ell'$:
        \begin{align*}
            \langle (\thetaLt)^{j-\ell}(\thetaRt)^\ell \hat{x}, (\thetaLt)^{j'-\ell'}(\thetaRt)^{\ell'} \hat{x}' \rangle &= \langle (\thetaLt)^{j-\ell} \hat{x}, (\thetaR)^\ell(\thetaLt)^{j'-\ell}(\thetaRt)^{\ell} \hat{x}' \rangle \\
            &= \langle (\thetaLt)^{j-\ell} \hat{x}, (\thetaLt)^{j'-\ell} \hat{x}' \rangle
        \end{align*}

        Similarly we can split this expression into cases based on $j, j'$,
        \begin{alignat*}{2}
            j > j': &\langle (\thetaLt)^{j-\ell} \hat{x}, (\thetaLt)^{j'-\ell} \hat{x}' \rangle &&= \langle \hat{x}, (\thetaL)^{j-j'} \hat{x}' \rangle = \langle \hat{x}, 0 \rangle = 0, \\
            j < j': & \langle (\thetaLt)^{j-\ell} \hat{x}, (\thetaLt)^{j'-\ell} \hat{x}' \rangle &&= \langle (\thetaL)^{j'-j} \hat{x}, \hat{x}' \rangle = \langle 0, \hat{x}' \rangle = 0, \\
            j = j': & \langle (\thetaLt)^{j-\ell} \hat{x}, (\thetaLt)^{j'-\ell} \hat{x}' \rangle &&= \langle (\thetaLt)^{j-\ell} \hat{x}, (\thetaLt)^{j-\ell} \hat{x}' \rangle \\
            & &&= \langle \hat{x}, \hat{x}' \rangle = 0 \text{ by definition for } \hat{x} \neq \hat{x}'.
        \end{alignat*}
    \end{enumerate}
    \end{proof}
\end{lemma}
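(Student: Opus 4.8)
The plan is to expand the inner product $\langle \Xi^*_{j,h}(\hat x), \Xi^*_{j',h'}(\hat x')\rangle$ by bilinearity into a double sum over $\ell \in \{0,\dots,j\}$ and $\ell' \in \{0,\dots,j'\}$, pulling the cosine weights out as scalars. Since these scalars cannot affect whether a sum of zeros vanishes, the whole statement reduces to showing that each individual monomial inner product $\langle (\thetaLt)^{j-\ell}(\thetaRt)^{\ell}\hat x,\, (\thetaLt)^{j'-\ell'}(\thetaRt)^{\ell'}\hat x'\rangle$ is zero whenever $j\neq j'$, or $j=j'$ and $\hat x \neq \hat x'$. This isolates the real content: a structural fact about the padded tensors $(\thetaLt)^{a}(\thetaRt)^{b}\hat x$ rather than about the particular trigonometric coefficients of $\Xi^*$.

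The tools I would use are exactly the algebraic relations already available: the adjointness $\langle \thetaLt v, w\rangle = \langle v, \thetaL w\rangle$ (and the analogue for $R$), the commutations ($\thetaR$ with $\thetaLt$, and $\thetaL$ with $\thetaRt$), the retraction identities $\thetaL\thetaLt = \thetaR\thetaRt = I$, and most importantly the annihilation $\thetaL\hat x = \thetaR\hat x = 0$, which holds because any Fourier word in $X_\cA^i$ has its first and last Fourier components different from $\dBvec$. The strategy is to use adjointness to move every operator onto one side of the inner product, commute the right-deletions past the left-adjoints so that like operators meet, collapse matched $\thetaR\thetaRt$ (resp. $\thetaL\thetaLt$) pairs to the identity, and then read off whatever surplus deletion remains; any surplus $\thetaR$ or $\thetaL$ lands on a Fourier word and kills the term.

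Concretely, I would first compare $\ell$ and $\ell'$. Assuming without loss of generality $\ell<\ell'$, pushing $(\thetaRt)^{\ell'}$ across the inner product and commuting it past the $\thetaLt$-block leaves a factor $(\thetaR)^{\ell'-\ell}$ acting on $\hat x$ once the $\ell$ matched pairs cancel; since $\ell'-\ell\ge 1$ this is $0$. Hence only the diagonal terms $\ell=\ell'$ survive, and on those the right-deletion parts cancel completely, reducing the term to $\langle (\thetaLt)^{j-\ell}\hat x,\, (\thetaLt)^{j'-\ell}\hat x'\rangle$. Now I compare $j$ and $j'$: moving all left-adjoints to one side and cancelling matched pairs reduces this either to $\langle \hat x, (\thetaL)^{j-j'}\hat x'\rangle$ (if $j>j'$), to $\langle (\thetaL)^{j'-j}\hat x, \hat x'\rangle$ (if $j<j'$) — both zero because a leftover $\thetaL$ annihilates a Fourier word — or, when $j=j'$, to $\langle \hat x, \hat x'\rangle$, which is $0$ precisely because distinct Fourier words are orthogonal members of the orthonormal Fourier tensor basis. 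Assembling the cases covers exactly hypotheses (1) and (2).

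I expect the main obstacle to be purely bookkeeping: keeping the exponents on $\thetaLt,\thetaRt,\thetaL,\thetaR$ aligned through the commutation-and-cancellation steps, and checking that the symmetry between $(\ell,j)$ and $(\ell',j')$ (equivalently between $\hat x$ and $\hat x'$) is genuinely symmetric so the ``without loss of generality'' is legitimate. A cleaner conceptual route, which I would use to sanity-check the computation, is to note that $(\thetaLt)^{a}(\thetaRt)^{b}\hat x$ is simply the padded tensor $\dBvec^{\,a}\,\hat x\,\dBvec^{\,b}$; because each basis tensor in the support of a Fourier word begins and ends with a non-$\dBvec$ component, the length of the leading (resp. trailing) $\dBvec$-run recovers $a$ (resp. $b$), so these padded tensors are pairwise orthogonal across distinct $(a,b,\hat x)$ and the lemma is immediate. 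This viewpoint makes transparent why only $\ell=\ell'$, $j=j'$, $\hat x=\hat x'$ can produce a nonzero overlap.
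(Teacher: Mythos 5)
Your proposal is correct and follows essentially the same route as the paper's proof: bilinear expansion with the cosine weights factored out, a case split on $\ell$ versus $\ell'$ using adjointness, the commutation relations, and the annihilation $\hat{\theta}_L\hat{x} = \hat{\theta}_R\hat{x} = 0$ to kill off-diagonal terms, and then a case split on $j$ versus $j'$ ending in the orthogonality of distinct Fourier words. Your closing observation --- that $(\hat{\theta}_L^*)^{a}(\hat{\theta}_R^*)^{b}\hat{x}$ is the padded tensor whose leading and trailing $\dBvec$-runs recover $a$ and $b$ --- is a nice conceptual summary of why the computation works, though the paper does not state it explicitly.
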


\begin{corollary} We have that $\text{ker}(\hLe) = \text{ker}(\hat{\inc})$.

\begin{proof}
We show this result in two parts:

$\text{ker}(\hat{\inc}) \subseteq \text{ker}(\hLe)$: Let $v \in \text{ker}(\hat{\inc})$. Then $\hat{\inc}v = 0 \Rightarrow \hLe v = \inc^* \hat{\inc} v = \inc^*0 = 0$.

$\text{ker}(\hLe) \subseteq \text{ker}(\hat{\inc})$: 
Let $\hLv: \cV_\cA^k \to \cV_\cA^k$ and $\hLe: \cV_\cA^{k+1} \to \cV_\cA^{k+1}$.

In Theorem \ref{thm:eigs_lv_le} we proved that for each $\hat{x} \in X_\cA^i$ with $i \in \{1, \cdots, k\}$ and $j = k-i$, the element defined by $v_0 = \sum_{\ell=0}^{j+1} (\thetaLt)^{j+1-\ell}(\thetaRt)^{\ell}\hat{x} = (\sqrt{j+2}) \Xi^*_{j+1, 0}(\hat{x}) \in \text{ker}(\hLe)$.
To do so, we showed that $\hat{\inc} v_0 = 0$. 
Therefore, we immediately have that $v_0 \in \text{ker}(\hat{\inc})$.
The case of $X_\cA^0 = \emptyset$ corresponds to $v_0 = \dBvec$ and by the definition of $\dBvec, \hatT_L$, and $\hatT_R$, it is clear that $\dBvec \in \text{ker}(\hat{\inc})$.
And in the case of $\hat{x} \in X_\cA^{k+1}$, we similarly know by definition that $\hat{x} \in \text{ker}(\hLe)$ and $\in \text{ker}(\hat{\inc})$.

Therefore, we know that there is an eigenvector $\in \text{ker}(\hLe)$ for each $\hat{x} \in X_\cA^i$ for $i \in \{0, \cdots, k+1\}$, and that each of these eigenvctors are also $\in \text{ker}(\hat{\inc})$.
What remains to be shown is that there are no more elements of $\text{ker}(\hLe)$ other than the ones we have listed here.
In doing so, we then establish that for all $v \in \text{ker}(\hLe) \Rightarrow v \in \text{ker}(\hat{\inc})$.

In \ref{apdx:combinatorial_proof} we show, via a combinatorial proof, that $\sum_{i=0}^{k+1} |X_\cA^i| = q^{k+1} - q^{k} + 1$ where $q = |\cA|$.
Therefore, we have enumerated a total of $q^{k+1} - q^{k} + 1$ elements in $\text{ker}(\hLe)$.
By definition of $\hLe$ as a linear map, we know that $\text{dim}(\hLe) = |\cA^{k+1}| = q^{k+1}$.

To show that there can be no more elements in $\text{ker}(\hLe)$ we observe that, in order to derive the eigenvectors of $\hLe$ we started with eigenvectors of $\hLv$ that correspond to non-zero eigenvalues $\lambda_h$ for $h \in \{1, \cdots j+1\}$.
For each of these eigenvectors, we similarly obtained an eigenvector of $\hLe$ with non-zero eigenvalue, $\lambda_h$.
Since $\hLv$ is a linear map of dimension $q^{k}$ and there is only one vector, $\dBvec$, in the null space of $\hLv$ (see \citet{Veerman2020APO} for a recent exposition on the dimension of the null space of the vertex Laplacian) we get precisely $q^{k}-1$ eigenvectors of $\hLv$ (and thus of $\hLe$) with non-zero eigenvalue.

Putting it all together, we see that $\text{dim}(\text{ker}(\hLe)) + \text{dim}(\text{Im}(\hLe)) = (q^{k+1} - q^k + 1) + (q^k - 1) = q^{k+1} = \text{dim}(\hLe)$, as desired. 
Note that we have already established that these eigenvectors are orthogonal to each other in Lemma \ref{lem:orthogonal}, and so we have completely filled out a basis for $\hLe$.
As such, there are no elements of $\text{ker}(\hLe)$ that are unaccounted for, and we have shown that all such elements are $\in \text{ker}(\hat{\inc})$.

\end{proof}
\end{corollary}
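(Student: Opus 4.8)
The statement splits into two inclusions, and only one of them carries real content. The inclusion $\ker(\hat{\inc}) \subseteq \ker(\hLe)$ is immediate from the definition $\hLe = \hat{\inc}^*\hat{\inc}$: if $\hat{\inc} v = 0$ then $\hLe v = \hat{\inc}^*(\hat{\inc} v) = \hat{\inc}^*(0) = 0$. For the reverse inclusion I would avoid enumerating the kernel and instead invoke the general Hilbert-space identity $\ker(T^*T) = \ker(T)$ with $T = \hat{\inc}$. Concretely, if $\hLe v = \hat{\inc}^*\hat{\inc} v = 0$, then pairing with $v$ gives
\begin{equation*}
0 = \langle \hat{\inc}^*\hat{\inc} v,\, v\rangle = \langle \hat{\inc} v,\, \hat{\inc} v\rangle = \|\hat{\inc} v\|^2,
\end{equation*}
so $\hat{\inc} v = 0$ and $v \in \ker(\hat{\inc})$. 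This collapses the entire corollary to the single fact that $\hat{\inc}^*$ is the genuine adjoint of $\hat{\inc}$ with respect to the standard Hermitian inner product.

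The one step requiring care is therefore the adjoint identity $\langle \hat{\inc}^* w, v\rangle = \langle w, \hat{\inc} v\rangle$, since the Fourier operators carry nonstandard normalizations. The cleanest way to discharge this is to pull everything back to native space, where $\theta_L^*, \theta_R^*$ are defined as literal adjoints of $\theta_L, \theta_R$; hence $\inc^* = \theta_L^* - \theta_R^*$ is literally the adjoint of $\inc = \theta_L - \theta_R$, and $\ker(L_E) = \ker(\inc)$ holds by the computation above applied to $\inc$. Because $\hat{\inc}$ and $\hLe$ are the images of $\inc$ and $L_E$ under conjugation by the invertible Fourier transform $F$ (up to positive scalar factors, which do not affect kernels), and conjugation by an invertible map sends $\ker(L_E)$ and $\ker(\inc)$ bijectively onto $\ker(\hLe)$ and $\ker(\hat{\inc})$, the native-space equality transfers verbatim to Fourier space.

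The alternative route --- the one that also produces the explicit orthogonal kernel basis needed later for the cycle space --- is constructive and is where the real combinatorial work would lie. Here I would exhibit, for each $\hat{x} \in X_\cA^i$ with $i \in \{0,\dots,k+1\}$, the vector $v_0 = \sqrt{j+2}\,\Xi^*_{j+1,0}(\hat{x})$ produced in \Cref{thm:eigs_lv_le}, together with $\dBvec$ and the elements of $X_\cA^{k+1}$, and check that each lies in $\ker(\hat{\inc})$ (which \Cref{thm:eigs_lv_le} already establishes by showing $\hat{\inc} v_0 = 0$). The main obstacle on this route is completeness: proving that these vectors exhaust $\ker(\hLe)$ rather than merely sit inside it. This is settled by a dimension count --- the combinatorial identity $\sum_{i=0}^{k+1}|X_\cA^i| = q^{k+1} - q^k + 1$ gives the number of exhibited vectors, \Cref{lem:orthogonal} guarantees they are linearly independent, and rank--nullity (using that $\hLv$ has the single kernel vector $\dBvec$, hence contributes exactly $q^k - 1$ eigenvectors of nonzero eigenvalue, each yielding one such eigenvector of $\hLe$) forces $\dim\ker(\hLe) = q^{k+1} - q^k + 1$. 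I would favor the operator-theoretic route for the corollary itself and defer the explicit enumeration to wherever the kernel basis is actually consumed, since that keeps the logical obstacle (the adjoint normalization) minimal and fully robust to the scalar factors.
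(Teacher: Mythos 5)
Your proof is correct, but it takes a genuinely different route from the paper's. The paper proves the hard inclusion $\ker(\hLe) \subseteq \ker(\hat{\inc})$ constructively: it collects the kernel vectors $v_0 = \sqrt{j+2}\,\Xi^*_{j+1,0}(\hat{x})$ produced in Theorem~\ref{thm:eigs_lv_le} (each already shown there to satisfy $\hat{\inc}\,v_0 = 0$), counts them via the combinatorial identity $\sum_{i}|X_\cA^i| = q^{k+1}-q^k+1$, uses Lemma~\ref{lem:orthogonal} for independence, and closes with a rank--nullity argument (resting on the cited fact that $\ker(\hLv)$ is spanned by $\dBvec$ alone) to show these vectors exhaust $\ker(\hLe)$. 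You instead collapse the whole statement to the Hilbert-space identity $\ker(T^*T)=\ker(T)$, and you correctly isolate the only real obligation: that $\hat{\inc}^*$ is the genuine adjoint of $\hat{\inc}$ despite the paper's loose normalization conventions. Your discharge of that obligation --- pull back to native space, where the paper defines $\theta_L^*, \theta_R^*$ as literal adjoints, then conjugate by the invertible Fourier transform, noting that nonzero scalars and invertible conjugation do not disturb kernels --- is sound; in fact you can avoid the pullback entirely, since the Fourier basis is orthonormal and the paper's $\hatT_L$ (delete a leading $\dBvec$, else annihilate) and $\hatT_L^*$ (prepend $\dBvec$) are manifestly adjoint to one another directly in Fourier space, and likewise for $\hatT_R, \hatT_R^*$. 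What each approach buys: yours is shorter, fully rigorous, and independent of the eigenvector enumeration --- in particular it does not lean on the completeness of that enumeration or on $\dim\ker(\hLv)=1$ --- whereas the paper's longer argument simultaneously produces the explicit orthogonal basis of $\ker(\hLe)$ that the subsequent decomposition $W_\cA^k = \oplus_{j}\,\Xi^*_{j,0}(X_\cA^{k-j})$ consumes, together with the dimension count $\dim\cC(G_{k+1}) = q^{k+1}-q^k+1$ as a byproduct. Your proposal to prove the corollary operator-theoretically and defer the enumeration to where the basis is actually used is a clean reorganization, but note it relocates rather than eliminates that work: the enumeration and dimension count would still need to be carried out for the cycle-space basis to be justified.
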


We now show that the cycle space and cut space of $G_k$ can be decomposed into the direct sum of orthogonal subspaces, spanned by the eigenvectors derived in Theorem \ref{thm:eigs_lv_le}, which we now express in terms of the $\Xi_{j, h}$ operators.

\begin{definition}[Eigenvectors of $\hLe$ in terms of the operators $\Xi^*_{j, h}$]
Let $\hLe: \cV_\cA^{k} \to \cV_\cA^{k}$.

The cycle space, $W_\cA^k$, is spanned by the basis vectors of $\text{ker}(\hat{\inc}) = \text{ker}(\hLe)$.
The cut space, $(W_\cA^k)^{\perp}$, is spanned by the basis vectors of $\text{Im}(\hLe)$.
We proved in Theorem \ref{thm:eigs_lv_le} and Lemma \ref{lem:orthogonal} that elements given by $\Xi^*_{j, h}$ form an orthogonal basis for $\hLe$, where $\Xi^*_{j, 0}$ lie in the kernel and $\Xi^*_{j, h}, h \in \{1, \cdots, j\}$ lie in the image of $\hLe$, respectively.
Furthermore, we showed that $\Xi^*_{j, 0} \perp \Xi^*_{j, h}$ for $h \in \{1, \cdots, j\}$.

By Theorem \ref{thm:eigs_lv_le}, we have that
\begin{align*}
    W_\cA^k &= \Xi^*_{k, 0}(X_\cA^0) \oplus \Xi^*_{k-1, 0}(X_\cA^{1}) \oplus \Xi^*_{k-2, 0}(X_\cA^2) \oplus \cdots \oplus \Xi^*_{0, 0}(X_\cA^k) \\
(W_\cA^k)^{\perp} &= [\oplus_{h=1}^{k-1} \Xi^*_{k-1, h}(X_\cA^1)] \oplus [\oplus_{h=1}^{k-2}\Xi^*_{k-2, h}(X_\cA^2)] \oplus \cdots \oplus [\oplus_{h=1}^2 \Xi^*_{2, h}(X_\cA^{k-2})] \oplus [\Xi^*_{1, 1}(X_\cA^{k-1})].
\end{align*}

These are compactly written as $W_\cA^k = \oplus_{j=0}^k \Xi^*_{j, 0}(X_\cA^{k-j})$ and $(W_\cA^k)^{\perp} = \oplus_{j=1}^{k-1} [\oplus_{h=1}^j \Xi^*_{j, h}(X_\cA^{k-j})]$.

The edge space of $G_k$ is then given by $\cE(G_k) = W_\cA^k \oplus (W_\cA^k)^{\perp}$.
\end{definition}

In \ref{app:sample_bases} we give explicit examples of cut and cycle space bases. \section{The space of \texorpdfstring{$k$}{k}-mer counts}
\label{sec:space_of_kmer_counts}

We now present a result that motivates the importance of the cycle space, which we derive an algebraic structure for in the following section.

Let $\Omega$ be the set of all circular strings. 
\begin{definition}
For $\omega \in \Omega$, let $$\psi_k(\omega): \Omega \to \cV_\cA^k$$ be the operator that maps $\omega$ to it's $k$-mer count tensor. The image of this operator, for a given $k$, is the set of all possible count tensors in $\cV_\cA^k$.
\end{definition}

\begin{theorem}
Let $W_\cA^k = \textup{ker}(\theta_L - \theta_R) = \textup{ker}(\inc) \subset \cV_\cA^k$.
Then $\text{Im}(\psi_k(\Omega)) \subset W_\cA^k$.
\end{theorem}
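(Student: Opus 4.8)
The plan is to show that the $k$-mer count tensor of any circular string lies in the kernel of the incidence operator $\inc = \theta_L - \theta_R$. The key insight is that for a circular string, the multiset of $k$-mers has a conservation property: each $(k-1)$-mer appears as a ``prefix'' (the first $k-1$ letters of some $k$-mer) exactly as many times as it appears as a ``suffix'' (the last $k-1$ letters of some $k$-mer). This is precisely the statement that at each vertex of the de Bruijn graph, the count tensor induces equal in-degree and out-degree, which is the defining condition of the cycle space via Euler's Theorem (\Cref{thm:eulers_thm_graph_cycle}).

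First I would unpack what $\inc$ does to a count tensor. Given a circular string $\omega$ of length $n$, its $k$-mer count tensor is $\psi_k(\omega) = \sum_{t=0}^{n-1} (\omega_t \omega_{t+1} \cdots \omega_{t+k-1})$, where indices are taken modulo $n$ and each summand is the rank-one tensor corresponding to the $k$-mer starting at position $t$. Applying $\theta_L$ deletes the first letter of each $k$-mer (up to the $\tfrac{1}{\sqrt{|\cA|}}$ normalization), yielding $\tfrac{1}{\sqrt{|\cA|}}\sum_{t} (\omega_{t+1} \cdots \omega_{t+k-1})$, while $\theta_R$ deletes the last letter, yielding $\tfrac{1}{\sqrt{|\cA|}}\sum_{t} (\omega_{t} \cdots \omega_{t+k-2})$.

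The crux is then a re-indexing argument: because the string is circular, the sum defining $\theta_L \psi_k(\omega)$ ranges over exactly the same collection of $(k-1)$-mers as the sum defining $\theta_R \psi_k(\omega)$, just shifted in the summation index. Specifically, the suffix of the $k$-mer starting at position $t$ equals the prefix of the $k$-mer starting at position $t+1$, namely $\omega_{t+1}\cdots\omega_{t+k-1}$; as $t$ runs over $\{0,\ldots,n-1\}$ modulo $n$, the index $t+1$ also runs over the full residue system. Hence the two sums are equal term-for-term after the shift $t \mapsto t+1$, giving $\theta_L \psi_k(\omega) = \theta_R \psi_k(\omega)$, so $\inc\,\psi_k(\omega) = (\theta_L - \theta_R)\psi_k(\omega) = 0$. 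Since this holds for every $\omega \in \Omega$, we get $\text{Im}(\psi_k(\Omega)) \subset \text{ker}(\inc) = W_\cA^k$.

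I expect the main (though modest) obstacle to be purely bookkeeping: handling the modular wraparound cleanly and being careful that the circularity is exactly what makes the boundary terms match up. For a \emph{linear} (non-circular) string the two sums would differ in their first and last terms, and the count tensor would generally not lie in the kernel; it is the periodicity in the \Cref{def:dbvec}-style circular setting that forces the telescoping cancellation. One should also verify the argument is insensitive to the normalization constant $\tfrac{1}{\sqrt{|\cA|}}$, which is immediate since it multiplies both $\theta_L\psi_k(\omega)$ and $\theta_R\psi_k(\omega)$ identically and so cancels in the difference.
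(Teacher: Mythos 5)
Your proof is correct, but it runs on a different engine than the paper's. The paper expands the count tensor in the $k$-mer basis with occurrence counts $\alpha_i$, applies $\theta_L$ and $\theta_R$, and regroups the resulting coefficients by $(k-1)$-mer: the coefficient $\beta_j$ collects counts of $k$-mers whose prefix is the $j$-th $(k-1)$-mer (weighted out-flow at that vertex) and $\gamma_j$ collects counts of $k$-mers whose suffix it is (weighted in-flow); it then concludes $\beta_j = \gamma_j$ by appealing to the vertex-balance property (in-degree $=$ out-degree, via the Eulerian structure of $G_k$). You instead parametrize $\psi_k(\omega)$ by the positions $t$ of the circular string and observe that the suffix of the $k$-mer at position $t$ is the prefix of the $k$-mer at position $t+1$, so the two sums $\theta_L\psi_k(\omega)$ and $\theta_R\psi_k(\omega)$ coincide under the bijection $t \mapsto t+1$ of $\Z/n\Z$. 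Your positional re-indexing is more elementary and, arguably, more airtight: the balance that is actually needed is a property of the weighted counts coming from the particular string $\omega$ (namely, that $\omega$ traces a \emph{closed} walk in $G_k$), not a property of the unweighted graph $G_k$ itself, so the paper's justification ``since $G_k$ is Eulerian'' is looser than your shift argument, which derives the conservation directly from circularity. What the paper's route buys in exchange is the explicit graph-theoretic reading --- flow conservation at each vertex of $G_k$ --- which ties the theorem into the surrounding narrative identifying $\ker(\inc)$ with the cycle space; your remark that linear (non-circular) strings fail at the boundary terms is a nice sanity check that the paper does not include.
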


\begin{proof}
Let $w \in \text{Im}(\psi_k(\Omega))$. Then $w$ has a representation as a weighted, linear combination of elements from the tensor basis on $\cV_\cA^k$. In other words, $w = \sum_{i=1}^{|\cA|^k} \alpha_i \kmeri$, recalling that $\kmeri$ is the tensor basis representation of a $k$-mer given by $\kmerstr{i}$ (assume lexicographic ordering of $k$-mers), and $\alpha_i$ represents the occurrence count of $\kmeri$.

Then we have that $\theta_R(w) = \sum_{i=1}^{|\cA|^k} \alpha_i \theta_R(\kmeri)$, with $\theta_L(w)$ defined analogously. 
Recall that we apply $\theta_R$ (and $\theta_L$) to a $k$-mer string $\kmeri$ as $\theta_R(\kmeri) = \theta_R(\kmerstr{i}) = \kmeri_0 \kmeri_1 \kmeri_2 \cdots \kmeri_{k-2}$.

After applying these operations to all $k$-mers we are left with $k-1$-mers, some of which are identical.
We can re-index the $k-1$-mers as $\kmerj$ for $j \in \{1, \cdots, |\cA|^{k-1}\}$ and retrieve new counts $\beta_j = \sum_{i \mid \theta_R(\kmeri) = \kmerj} \alpha_i$ to get that $\theta_R(w) = \sum_{j=1}^{|\cA|^{k-1}} \beta_j \kmerj$.
Equivalently, we have that $\theta_L(w) = \sum_{j=1}^{|\cA|^{k-1}} \gamma_j \kmerj$ for $\gamma_j = \sum_{i \mid \theta_L(\kmeri) = \kmerj} \alpha_i$.
Then $(\theta_L - \theta_R)w = \theta_L(w) - \theta_R(w) = \sum_{j=1}^{|\cA|^{k-1}} (\gamma_j - \beta_j) \kmerj$.

In $G_k$, all $k-1$-mers are vertices and $k$-mers are directed edges (weighted by the relevant $k$-mer occurrences). 
Then for a $k-1$-mer, $\kmerj$, $\beta_j$ sums over all outgoing edges for the vertex corresponding to $\kmerj$ and and $\gamma_j$ sums over all incoming edges for the same vertex. 
Since $G_k$ is Eulerian we have that in-degree $=$ out-degree for each node, thus $\beta_j = \gamma_j \ \forall \ j$. 
Therefore $(\theta_L - \theta_R)w = 0$ and so $w \in \text{ker}(\theta_L - \theta_R)$.
\end{proof}

It is known that the kernel of the incidence matrix of a graph, both undirected and directed, coincides with the cycle space of the graph. \citep{bollobas_modern}
As we have just shown, any $k$-mer count tensor is an element of $W_\cA^k$ = ker$(\inc)$.
In turn, ker$(\inc)$, being the cycle space of the \dB graph, is spanned by the set of basis vectors we have previously derived, which comprise a cycle basis for the directed \dB graph.
Further note that the dimension of the cycle space of \dB graphs, $|\cA|^k - |\cA|^{k-1} + 1$, is smaller than the total number of $k$-mers, $|\cA|^k$.

In Section \ref{sec:main_results}, we showed a construction for the basis elements of the cut space and cycle space of $G_k$.
Our construction additionally leads to a combinatorial proof of the dimension of the cycle space, based on counting the elements of $X_\cA^k$ which we use to build the basis vectors of the cycle space.
We present this argument in \ref{apdx:combinatorial_proof}.
 \section{Algebraic structure}

In the previous sections, we considered \dB graphs of order $k$ for some fixed value $k$.
In this section, we are interested in the collection of \dB graphs across all orders $k$, \textit{simultaneously}.
By definition, the graph theory operators we defined, e.g. $\theta_L, \theta_R$, act independently of $k$.
Therefore, the adjacency, incidence, and Laplacian operators act across the set of all orders of \dB graphs simultaneously.
By taking this perspective, we show that the cycle space has a richer algebraic structure than just that of a vector space.

\subsection{Background on tensor algebras and Hopf algebras}

The results of this section will be cast in the language of graded tensor algebras and graded Hopf algebras. 
We therefore begin with a brief review of some relevant concepts, mostly with an eye towards establishing notation.
A reader who is unfamiliar with tensor algebras and Hopf algebras will likely find this too quick, and we invite them to review \citet{dascalescu2000hopf} for a detailed exposition.

Let $Z$ be a vector space over a field $K$.
For a non-negative integer $k$, recall that we define the $k^{th}$-order tensor space via the tensor product of $Z$ with itself $k$ times:
\begin{align*}
    Z^k = Z \underbrace{\otimes \cdots \otimes}_{\textrm{$k-1$ times}} Z.
\end{align*}
We then define the \textit{tensor algebra} over $Z$ to be
\begin{align*}
    T(Z) = \oplus_{k=0}^\infty Z^k
\end{align*}
where $Z^0 = K$.
We note that for any tensor algebra we can define the filtration $F_k = \oplus_{i=0}^k Z^i$ such that that $F_0 \subseteq F_1 \subseteq \cdots$ and $F_i \odot F_j \subseteq F_{i+j}$ for all $i,j$.
As such, any tensor algebra is also a graded, filtered algebra.

Given an algebra on $Z$, a coalgebra can be defined if there exists $K$-linear maps $\Delta: Z \to Z \times Z$ and $\varepsilon: Z \to K$, such that
\begin{align*}
    (\text{id}_Z \otimes \Delta) \circ \Delta &= (\Delta \otimes \text{id}_Z) \circ \Delta \\
    (\text{id}_Z \otimes \varepsilon) \circ \Delta &= (\varepsilon \otimes \text{id}_Z) \circ \Delta
\end{align*}
where $\text{id}_Z$ is the identity on $Z$.
$\Delta$ is referred to as the coproduct and $\varepsilon$ is the counit.

$T(Z)$ carries a pair of well-known products and coproducts, which we define and provide examples of here.
Let $z = z_1 z_2 \cdots z_k \in Z^k, z' = z'_1 z'_2 \cdots z'_\ell \in Z^\ell$.

\begin{definition}[Concatenation product]
    We use the symbol $\concat$ to represent concatenation of two words, given by
    \begin{align*}
        z z' = z \concat z' &= z_1 z_2 \cdots z_k z'_1 z'_2 \cdots z'_\ell.
    \end{align*}
\end{definition}

\begin{definition}[Shuffle product]
\label{def:shuffle_product}
The shuffle product over two words is given by the sum over all the ways in which the letters of two words can be interleaved while preserving the letter-ordering of the individual words.

We represent this operation with the symbol $\shuffle$ and it has the following, equivalent, recursive definitions given by
\begin{align*}
    z \shuffle z' &= z_1 (z_{2\cdots k} \shuffle z') + z'_1 (z \shuffle z'_{2\cdots \ell}) \\
    &= (z_{1\cdots k-1} \shuffle z')z_{k} + (z \shuffle z'_{1\cdots \ell-1})z'_{\ell}
\end{align*}
\end{definition}
where $z_{1\cdots k-1}$ is the contiguous subsequence of $z$ obtained from the $1^{st}$ to $(k-1)^{st}$ letters of $z$, with similar definitions holding for the other indexed words given above.

\begin{example}[Shuffle product of two words]
Given the strings $ab$ and $cd$ for some letters $a, b, c, d \in Z$,
\begin{align*}
    ab \shuffle cd = abcd + acbd + cabd + acdb + cadb + cdab.
\end{align*}
\end{example}

We now provide definitions and examples for the counit and coproducts with respect to the concatenation and shuffle products.

We will refer to $\emptyword$ as the empty word, which is the unit of $T(Z)$ and acts on the concatenation and shuffle operators as $\emptyword \concat z = z \concat \emptyword = z$ and  $\emptyword \shuffle z = z \shuffle \emptyword = z$, respectively, for all $z \in T(Z)$.
The counit $\varepsilon$ is defined as $\varepsilon(\emptyword) = 1$ and $\varepsilon(z) = 0$ for any other $z \neq \emptyword$ (here we simply take the underlying field $K = \mathbb{C}$).

We use a more general notion of an indexed word in defining the coproducts.
For a word $z = z_1 z_2 \cdots z_k$ define an index set $I \subseteq \{1, \cdots, k\}$ such that $I = \{i_1, i_2, \cdots \}$ preserves order, meaning $i_1 < i_2 < \cdots$. Then we define $z_I = \concat_{i \in I} z_i$. 
That is, $z_I$ is the word made from the letters of $z$ corresponding to indices of $I$, in an order-preserving manner.
Note that $z_\emptyset = \emptyword$.

\begin{example}[Indexed word]
    Let $z = abcd$ and $I = {1, 3}$. Then $z_I = ac$.
\end{example}

The coproduct of the concatenation product is given by the de-shuffle operation, and the coproduct of the shuffle product is given by the de-concatenation operation.

\begin{definition}[De-concatenation coproduct]
    Given a word $z = z_1 z_2 \cdots z_k$, the de-concatenation operator is given by
    \begin{align*}
        \Delta_{\concat} = \sum_{i = 0}^{k} z_1 \cdots z_{i} \otimes z_{i+1} \cdots z_{k}
    \end{align*}
    where the $i=0$ term corresponds to $\emptyword \otimes z$ and $i=k$ corresponds to $z \otimes \emptyword$.
\end{definition}

\begin{definition}[De-shuffle coproduct]
    Given a word $z = z_1 z_2 \cdots z_k$, the de-shuffle operator is given by
    \begin{align*}
        \Delta_{\shuffle} = \sum_{I \subseteq \{1, \cdots, k\}} z_I \otimes z_{\{1, \cdots, k\} \setminus I}.
    \end{align*}
\end{definition}

\begin{example}[De-concatenation example]
    Let $z = abcd$. Then 
    $$\Delta_{\concat}(z) = \emptyword \otimes abcd + a \otimes bcd + ab \otimes cd + abc \otimes d + abcd \otimes \emptyword.$$
\end{example}

\begin{example}[De-shuffle example]
    Let $z = abc$. Then
    $$\Delta_{\shuffle}(z) =  \emptyword \otimes abc + a \otimes bc + b \otimes ac + c \otimes ab + ab \otimes c + ac \otimes b + bc \otimes a + abc \otimes \emptyword.$$
\end{example}

Recall that a bialgebra is a vector space that is equipped with a product and coproduct, such that the coproduct is a homomorphism of the algebra and the product is a homomorphism of the coalgebra.
A bialgebra becomes a \textit{Hopf algebra} when it also has an \textit{antipode}. The antipode is given by the inverse of the identity mapping (see Chapter 4 of \citet{dascalescu2000hopf} for a more thorough description of bialgebras and Hopf algebras). 

For $Z$ a $K$-bialgebra with product $\nabla$, unit $\eta$, coproduct $\Delta$, and counit $\varepsilon$, an antipode is a linear map $\gamma: Z \to Z$ such that
\begin{align*}
    \nabla \circ (\text{id}_Z \otimes \gamma) \circ \Delta = \nabla \circ (\gamma \otimes \text{id}_Z) \circ \Delta = \eta \circ \varepsilon.
\end{align*}

For the de-shuffle and de-concatenation coproducts on $T(Z)$ we have an antipode.
\begin{definition}[Antipode for de-shuffle and de-concatenation coproducts]
    \label{def:antipode}
    Let $z = z_1 z_2 \cdots z_k \in T(Z)$. Define $\gamma: T(Z) \to T(Z)$,
    \begin{align*}
        \gamma(z) = (-1)^k z_k z_{k-1} \cdots z_1.
    \end{align*}
\end{definition}

The following theorem is well know and can be found in, for example, \citet{Crossley2006SOMEHA} and \citet{yeats_combinatorial}.
\begin{theorem}
Let $T(Z)$ be a filtered, graded tensor algebra.
\begin{enumerate}
    \item With product $\shuffle$, coproduct $\Delta_{\concat}$, and antipode $\gamma$, $T(Z)$ is a Hopf algebra.
    \item With product $\concat$, coproduct $\Delta_{\shuffle}$, and antipode $\gamma$, $T(Z)$ is a Hopf algebra.
    \item These two Hopf algebras are dual to each other.
\end{enumerate}
\end{theorem}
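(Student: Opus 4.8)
The plan is to verify, for each of the two structures, the three bialgebra axioms---an associative unital product, a coassociative counital coproduct, and compatibility of the product with the coproduct---and then to obtain the antipode almost for free. The key simplifying observation is that $T(Z) = \oplus_{k \geq 0} Z^k$ is a \emph{graded connected} bialgebra under either structure: both products send $Z^k \otimes Z^\ell$ into $Z^{k+\ell}$, both coproducts respect the grading, and $Z^0 = K$ is one-dimensional. For any graded connected bialgebra the antipode exists and is unique (a standard fact, given by the geometric-series inversion of the identity), so once the bialgebra axioms are in hand it remains only to check that the explicit $\gamma$ of \ref{def:antipode} satisfies the antipode equation, which follows by a short induction on word length (or, more cheaply, by uniqueness).

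For the algebra and coalgebra axioms I would handle statements (1) and (2) in parallel. Associativity of $\shuffle$ I would prove by induction on the total length of the three arguments, using the recursion of \ref{def:shuffle_product}, with $\emptyword$ a two-sided unit by construction. Coassociativity of $\Delta_{\concat}$ is immediate: both $(\Delta_{\concat} \otimes \text{id}_Z) \circ \Delta_{\concat}$ and $(\text{id}_Z \otimes \Delta_{\concat}) \circ \Delta_{\concat}$ send $z = z_1 \cdots z_k$ to the sum $\sum_{0 \le i \le j \le k} z_{1 \cdots i} \otimes z_{i+1 \cdots j} \otimes z_{j+1 \cdots k}$ over all ordered splittings into three consecutive blocks, and $\varepsilon$ is a counit because only the extreme terms survive pairing with it. The dual statements---associativity of $\concat$ (trivial) and coassociativity of $\Delta_{\shuffle}$, where both iterated coproducts give $\sum z_I \otimes z_J \otimes z_K$ over ordered partitions of $\{1,\dots,k\}$ into three index blocks---go through identically.

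The crux is the bialgebra compatibility, i.e. that the coproduct is an algebra homomorphism. For (1) this is $\Delta_{\concat}(u \shuffle v) = \Delta_{\concat}(u) \shuffle \Delta_{\concat}(v)$, where $\shuffle$ on the right is the product induced on $T(Z) \otimes T(Z)$. I would prove it combinatorially: a term of $u \shuffle v$ is an interleaving of $u$ and $v$ recorded by an order-preserving placement of positions, and $\Delta_{\concat}$ cuts the interleaved word after some position; that cut splits each of $u$ and $v$ into a prefix and a suffix, the left half of the interleaving being a shuffle of the two prefixes and the right half a shuffle of the two suffixes. This yields a length-preserving bijection reproducing exactly the terms of $\Delta_{\concat}(u) \shuffle \Delta_{\concat}(v)$; alternatively one inducts on $|u|+|v|$ from the recursions. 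Statement (2) is the dual compatibility $\Delta_{\shuffle}(u \concat v) = \Delta_{\shuffle}(u) \concat \Delta_{\shuffle}(v)$, proved by the same bijection read in the opposite direction. I expect this to be the main obstacle, since all the genuine content lives in the shuffle bookkeeping while the remaining axioms are formal.

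Finally, for statement (3) I would equip $T(Z)$ with the canonical pairing $\langle u, v \rangle = \delta_{u,v}$ on the basis of words (extending a chosen basis of $Z$) and verify the two adjunction identities $\langle u \shuffle v, w \rangle = \langle u \otimes v, \Delta_{\shuffle}(w) \rangle$ and $\langle u \concat v, w \rangle = \langle u \otimes v, \Delta_{\concat}(w) \rangle$, both of which reduce to counting, respectively, the interleavings and the single splitting of $w$ that reproduce the pair $(u, v)$. These say precisely that the product $\shuffle$ of the first Hopf algebra is dual to the coproduct $\Delta_{\shuffle}$ of the second, and that the coproduct $\Delta_{\concat}$ of the first is dual to the product $\concat$ of the second; since $\gamma$ is its own adjoint under this pairing, the antipodes correspond as well, so the two Hopf algebras are dual, as claimed.
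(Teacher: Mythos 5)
Your outline is correct, but there is no in-paper argument to compare it against: the paper does not prove this theorem at all, stating it as a well-known classical result and citing Crossley and Yeats for the proof. What you have reconstructed is the standard proof from that literature, and its structure is sound. The observation that $T(Z)$ is a graded \emph{connected} bialgebra (since $Z^0 = K$) under either structure correctly reduces everything to the bialgebra axioms, with existence and uniqueness of the antipode coming for free from the convolution geometric series; the genuine combinatorial content is, as you say, the compatibility $\Delta_{\concat}(u \shuffle v) = \Delta_{\concat}(u) \shuffle \Delta_{\concat}(v)$, and your cut-splits-the-interleaving bijection is exactly the right argument, with $\Delta_{\shuffle}(u \concat v) = \Delta_{\shuffle}(u) \concat \Delta_{\shuffle}(v)$ obtained by reading the same bijection in reverse. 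The duality via the word pairing $\langle u, v \rangle = \delta_{u,v}$ is likewise the standard one, and is legitimate here because each graded piece is finite-dimensional (in the paper $Z = V_\cA$ with $\dim V_\cA = q$), so duality means graded duality. Two small points to tighten if you flesh this out: uniqueness of the antipode does not by itself identify it with the explicit $\gamma$ of Definition~\ref{def:antipode}, so the inductive verification that $\sum_{i=0}^{k} z_{1\cdots i} \shuffle \gamma(z_{i+1\cdots k}) = 0$ for $k \geq 1$ is not optional; and you should record the trivial remaining bialgebra checks, namely $\varepsilon(u \shuffle v) = \varepsilon(u)\varepsilon(v)$ and $\Delta(\emptyword) = \emptyword \otimes \emptyword$. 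In short, the paper buys brevity by deferring to the literature for a result that is not its contribution, while your route buys a self-contained exposition that is essentially the proof found in the works the paper cites.
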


When we take $Z$ to be the vector space over a discrete alphabet $\cA$ and carry forward the definitions given above, we obtain a \textit{combinatorial Hopf algebra}.
We will now proceed to consider the tensor algebras over the various vector spaces of $\dB$ graphs that we have defined in our manuscript.

\subsection{Tensor algebras associated with De Bruijn graphs}

Throughout this manuscript we have considered the following three vector spaces associated with \dB graphs or order $k$ over an alphabet $\cA$.
Recall that we take $V_\cA$ to be a vector space with an orthogonal basis indexed by the letters of $\cA$.
Then we define
\begin{align*}
    V_\cA^k &= V_\cA \underbrace{\otimes \cdots \otimes}_{\textrm{$k-1$ times}} V_\cA \\
    W_\cA^k &= \text{ker}(\theta_L - \theta_R) \subseteq V_\cA^k \\
    X_\cA^k &= \text{ker}(\theta_L) \cap \text{ker}(\theta_R) \subseteq V_\cA^k.
\end{align*}
We now consider the following filtered, graded tensor algebras given by the infinite direct sums across all values of $k$,
\begin{align*}
    T(V) &= \oplus_{k=0}^\infty V_\cA^k \\
    T(W) &= \oplus_{k=0}^\infty W_\cA^k \\
    T(X) &= \oplus_{k=0}^\infty X_\cA^k.
\end{align*}

Recall that for a $K$-algebra $Z$ with a product $\nabla$, a \textit{derivation}, $D$, is a $K$-linear map that acts on $z, z' \in Z$ as
\begin{align*}
    D\nabla(z, z') = \nabla(Dz, z') + \nabla(z, Dz')
\end{align*}
which is known as Leibniz's law.

\begin{lemma}[$\theta_L, \theta_R$ are derivations of the shuffle product]
    \label{lem:theta_derivations}
    For words $z, z' \in T(Z)$, we have that
    \begin{align*}
        \theta_L(z \shuffle z') &= \theta_L(z) \shuffle z' + z \shuffle \theta_L(z') \\
        \theta_R(z \shuffle z') &= \theta_R(z) \shuffle z' + z \shuffle \theta_R(z').
    \end{align*}
\end{lemma}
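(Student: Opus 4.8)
The plan is to read the derivation property directly off the two recursive definitions of the shuffle product in Definition \ref{def:shuffle_product}, matching the left-peeling recursion to $\theta_L$ and the right-peeling recursion to $\theta_R$. The essential observation is that each recursive form partitions the terms of $z \shuffle z'$ according to which letter occupies the relevant extremal position, so that a single deletion operator acts uniformly across an entire group of monomials at once; consequently no induction on word length is required, since the recursion already exposes the first (resp.\ last) letter in one step.

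First I would dispatch the normalization constant, as it is the only thing that could break the clean bookkeeping. Writing $z \in Z^k$ and $z' \in Z^\ell$, every monomial appearing in $z \shuffle z'$ has length $k+\ell$, so $\theta_L(z \shuffle z')$ carries exactly one factor of $\frac{1}{\sqrt{|\cA|}}$; likewise each of $\theta_L(z) \shuffle z'$ and $z \shuffle \theta_L(z')$ carries exactly one such factor, because $\theta_L$ shortens its argument by one letter before the shuffle rebuilds the length. Since the constant is uniform on both sides, it suffices to verify the Leibniz identity for the unnormalized delete-first and delete-last maps.

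For $\theta_L$ I would invoke the first recursion $z \shuffle z' = z_1 (z_{2\cdots k} \shuffle z') + z'_1 (z \shuffle z'_{2\cdots \ell})$. Every monomial in the first summand begins with the letter $z_1$ and every monomial in the second begins with $z'_1$; since $\theta_L$ is linear and deletes the leading letter of each basis word, applying it strips these prefixes and returns $z_{2\cdots k} \shuffle z' + z \shuffle z'_{2\cdots \ell}$, which is precisely $\theta_L(z) \shuffle z' + z \shuffle \theta_L(z')$. The argument for $\theta_R$ is identical after substituting the second (right-peeling) recursion $z \shuffle z' = (z_{1\cdots k-1} \shuffle z') z_k + (z \shuffle z'_{1\cdots \ell-1}) z'_\ell$, where now every monomial in the two summands terminates in $z_k$ or $z'_\ell$ respectively, so that $\theta_R$ deletes a common trailing letter from each group.

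The only place needing care — and the main, if modest, obstacle — is the base cases: when $z$ or $z'$ is a single letter (so the truncation $z_{2\cdots k}$ or $z'_{2\cdots \ell}$ becomes the empty word $\emptyword$), or when one argument is $\emptyword$ itself. Here I would appeal to the convention from Definition \ref{def:std_delete} that deleting the lone letter returns the suitably normalized unit, together with $\emptyword \shuffle w = w$, to confirm that the peeling argument still closes; a one-line check on $z = a$, $z' = b$ gives $\theta_L(ab + ba) = \frac{1}{\sqrt{|\cA|}}(a+b) = \theta_L(a) \shuffle b + a \shuffle \theta_L(b)$, matching the general formula and confirming that the extremal terms are accounted for correctly.
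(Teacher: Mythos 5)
Your proof is correct and takes essentially the same route as the paper's: both read the Leibniz identity directly off the two recursive definitions of the shuffle product, pairing the left-peeling recursion with $\theta_L$ and the right-peeling recursion with $\theta_R$, with no induction needed. If anything, you are more careful than the paper, whose proof silently drops the $\frac{1}{\sqrt{|\cA|}}$ normalization and does not discuss the single-letter/empty-word boundary cases that you explicitly check.
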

\begin{proof}
    Let $z \in Z^k, z' \in Z^\ell$.
    We start with the first recursive definition of the shuffle product, given by $z \shuffle z' = z_1(z_{2\cdots k} \shuffle z') + z'_1(z \shuffle z'_{2\cdots \ell})$.
    Then it is clear that $\theta_L(z \shuffle z') = z_{2\cdots k} \shuffle z' + z \shuffle z'_{2\cdots \ell}$..
    Further, we can see that $\theta_L(z) \shuffle z' = z_{2\cdots k} \shuffle z'$ and $z \shuffle \theta_L(z') = z \shuffle z'_{2 \cdots \ell}$. A similar argument holds for $\theta_R$ using the second recursive definition of the shuffle product.
\end{proof}

\begin{corollary}[$\theta_L - \theta_R$ is a derivation of the shuffle product]
    As linear maps, $\theta_L, \theta_R$ are derivations of the shuffle product. Therefore, so is $\theta_L - \theta_R$.
\end{corollary}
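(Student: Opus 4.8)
The plan is to leverage the preceding Lemma directly: since it already establishes that $\theta_L$ and $\theta_R$ each satisfy Leibniz's law with respect to the shuffle product $\shuffle$, the only remaining task is to show that the \emph{difference} of two derivations is again a derivation. This is a purely formal fact about algebras, and it relies only on the bilinearity of $\shuffle$ together with the linearity of the maps $\theta_L, \theta_R$.

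Concretely, I would set $D = \theta_L - \theta_R$ and compute $D(z \shuffle z')$ for arbitrary words $z, z' \in T(Z)$. Applying linearity of $D$ gives $D(z \shuffle z') = \theta_L(z \shuffle z') - \theta_R(z \shuffle z')$, and then invoking the Lemma on each of the two terms expands this into four summands. The key step is to regroup these four terms: because $\shuffle$ is linear in each of its two arguments, the $\theta_L$ and $\theta_R$ contributions collect into $(\theta_L - \theta_R)(z) \shuffle z'$ and $z \shuffle (\theta_L - \theta_R)(z')$ respectively. This yields exactly $D(z) \shuffle z' + z \shuffle D(z')$, which is precisely Leibniz's law for $D$.

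There is no real obstacle here: the statement is an immediate consequence of the fact that the derivations of a fixed algebra form a vector space, closed under arbitrary linear combinations. The only point requiring care is confirming that the shuffle product is genuinely bilinear, so that the regrouping of the four summands is valid — but this is immediate from the recursive definition of $\shuffle$, which extends multilinearly from basis words to all of $T(Z)$. Thus the corollary follows with essentially no computation beyond this bilinear rearrangement.
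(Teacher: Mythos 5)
Your proposal is correct and matches the paper's argument exactly: the paper's corollary is itself justified by the one-line observation that derivations are closed under linear combinations, which is precisely the bilinearity-based regrouping of the four summands you spell out. You simply make explicit the routine computation the paper leaves implicit, so there is nothing to add or correct.
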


From Lemma \ref{lem:theta_derivations}, we immediately get the following result.
\begin{proposition}
$T(V), T(W), T(X)$ are graded algebras and are closed under the shuffle product.
\end{proposition}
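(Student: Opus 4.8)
The plan is to observe that the grading is automatic once closure under the shuffle product is established, so the real content lies in checking that each of $W_\cA^k$ and $X_\cA^k$ is preserved by shuffling. Since $\shuffle$ is bilinear and, by either recursive formula in Definition \ref{def:shuffle_product}, sends a homogeneous degree-$k$ tensor and a homogeneous degree-$\ell$ tensor to a homogeneous degree-$(k+\ell)$ tensor, we have the graded-algebra identity $V_\cA^k \shuffle V_\cA^\ell \subseteq V_\cA^{k+\ell}$ at the level of $T(V)$. It therefore suffices to take homogeneous $z \in V_\cA^k$, $z' \in V_\cA^\ell$ and verify that the defining kernel conditions are inherited by $z \shuffle z'$; this refines the grading on $T(V)$ to the desired graded structures $W_\cA^k \shuffle W_\cA^\ell \subseteq W_\cA^{k+\ell}$ and $X_\cA^k \shuffle X_\cA^\ell \subseteq X_\cA^{k+\ell}$.

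For $T(V)$ there is nothing to prove beyond the grading remark above, as each $V_\cA^k$ is the full $k$-th tensor power. For $T(W)$, take $z \in W_\cA^k$ and $z' \in W_\cA^\ell$, so $(\theta_L - \theta_R)(z) = (\theta_L - \theta_R)(z') = 0$. Applying the Corollary that $\theta_L - \theta_R$ is a derivation of $\shuffle$,
\begin{align*}
    (\theta_L - \theta_R)(z \shuffle z') &= (\theta_L - \theta_R)(z) \shuffle z' + z \shuffle (\theta_L - \theta_R)(z') \\
    &= 0 \shuffle z' + z \shuffle 0 = 0,
\end{align*}
so $z \shuffle z' \in \text{ker}(\theta_L - \theta_R) = W_\cA^{k+\ell}$. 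For $T(X)$, take homogeneous $z, z' \in \text{ker}(\theta_L) \cap \text{ker}(\theta_R)$ and apply Lemma \ref{lem:theta_derivations} to $\theta_L$ and $\theta_R$ separately; because both $z$ and $z'$ are annihilated by each operator, the two Leibniz expansions give $\theta_L(z \shuffle z') = 0$ and $\theta_R(z \shuffle z') = 0$, whence $z \shuffle z' \in X_\cA^{k+\ell}$. Closure for arbitrary (non-homogeneous) elements of $T(W)$ and $T(X)$ then follows by the bilinearity of $\shuffle$.

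I do not expect a substantial obstacle: the entire argument is a one-line invocation of the Leibniz property established in Lemma \ref{lem:theta_derivations} and its Corollary. The only points requiring a moment's care are (i) confirming that $\shuffle$ respects the grading, so that the kernel conditions may be verified degree-by-degree, and (ii) noting that the derivation identities, although stated for words, extend by bilinearity to all of $V_\cA^k$, which is precisely what lets the conclusion pass from homogeneous tensors to the full direct sums $T(W)$ and $T(X)$.
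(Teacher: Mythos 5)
Your proof is correct and takes essentially the same approach as the paper's: both arguments reduce the claim to invoking Lemma~\ref{lem:theta_derivations} (for $T(X)$) and its corollary for $\theta_L - \theta_R$ (for $T(W)$), so that the defining kernel conditions are annihilated termwise under the Leibniz expansion of the shuffle. Your added care about homogeneity, the grading identity $V_\cA^k \shuffle V_\cA^\ell \subseteq V_\cA^{k+\ell}$, and the extension to non-homogeneous elements by bilinearity are minor refinements of details the paper leaves implicit.
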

\begin{proof}
    This is clear on $T(V)$ by the base definitions we have given.
    Take $x, x' \in T(X)$.
    Recall the definition that $x, x' \in \text{ker}(\theta_L) \cap \text{ker}(\theta_R)$.
    Then $\theta_L x = \theta_R x = \theta_L x' = \theta_R x' = \mathbf{0}$, where $\mathbf{0}$ is the zero-tensor.
    Using Lemma \ref{lem:theta_derivations}, we have that 
    \begin{align*}
        \theta_L (x \shuffle x') &= \theta_L(x) \shuffle x' + x \shuffle \theta_L(x') \\
        &= \mathbf{0} \shuffle x' + x \shuffle \mathbf{0} = \mathbf{0}.
    \end{align*}
    It should be clear that the same holds for $\theta_R$, and so $x \shuffle x' \in T(X)$.

    Now take $w, w' \in T(W)$. Then $w, w' \in \text{ker}(\theta_L - \theta_R)$.
    As $\theta_L - \theta_R$ is a derivation of the shuffle product, we similarly have that
    \begin{align*}
        (\theta_L - \theta_R)(w \shuffle w') &= (\theta_L - \theta_R)(w) \shuffle w' + w \shuffle (\theta_L - \theta_R)(w') \\
        &= \mathbf{0} \shuffle w' + w \shuffle \mathbf{0} = \mathbf{0}.
    \end{align*}
    Therefore $w \shuffle w' \in T(W)$.
\end{proof}

\begin{proposition}
    $T(V), T(X)$ are closed under the concatenation product while $T(W)$ is not.
\end{proposition}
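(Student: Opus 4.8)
The plan is to treat the three assertions in turn, powered by a single observation about how the deletion operators interact with concatenation. The key fact I would record first is that concatenation is \emph{one-sided} for $\theta_L$ and $\theta_R$: for words $z,z'$ with $z$ of degree $\geq 1$,
\[
\theta_L(z\concat z') = \theta_L(z)\concat z',\qquad \theta_R(z\concat z')=z\concat\theta_R(z'),
\]
the second identity requiring $z'$ of degree $\geq 1$. Both follow by linearity after checking them on basis tensors, where $\theta_L$ strips the leftmost letter, $\theta_R$ the rightmost, and $\concat$ merely juxtaposes the two words. This one-sidedness is exactly what separates $\concat$ from $\shuffle$, for which Lemma~\ref{lem:theta_derivations} furnished the two-sided Leibniz rule, and it is the source of the differing behavior of $X$ and $W$.

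For $T(V)$ closure is immediate from the grading, since $V_\cA^k\concat V_\cA^\ell\subseteq V_\cA^{k+\ell}$ with no operator input at all. For $T(X)$ I would take $x\in X_\cA^k$ and $x'\in X_\cA^\ell$ with $k,\ell\geq 1$ (if either has degree $0$ the product is just the other factor, already in $T(X)$). Applying the one-sided identities together with $x\in\ker\theta_L$ and $x'\in\ker\theta_R$ gives $\theta_L(x\concat x')=\theta_L(x)\concat x'=0$ and $\theta_R(x\concat x')=x\concat\theta_R(x')=0$, so $x\concat x'\in\ker\theta_L\cap\ker\theta_R=X_\cA^{k+\ell}$. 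The conceptual reason is that membership in $X$ constrains only the two extreme tensor slots, and concatenation inherits its left slot from $x$ and its right slot from $x'$.

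The non-closure of $T(W)$ is the crux, and it is settled by an explicit counterexample rather than a structural argument; here the one-sided identities give $(\theta_L-\theta_R)(w\concat w')=\theta_L(w)\concat w' - w\concat\theta_R(w')$, which, unlike the $X$ case, couples the two words and has no reason to vanish. Concretely, over $\cA=\{a,b\}$ I would first note that on degree-one tensors $\theta_L$ and $\theta_R$ both delete the unique letter, so they agree and $W_\cA^1=V_\cA^1$; in particular $a,b\in W_\cA^1\subset T(W)$. Yet $a\concat b = ab$ has $\theta_L(ab)=\tfrac{1}{\sqrt q}\,b$ and $\theta_R(ab)=\tfrac{1}{\sqrt q}\,a$, whence $(\theta_L-\theta_R)(ab)=\tfrac{1}{\sqrt q}(b-a)\neq 0$, so $ab\notin W_\cA^2$ and $T(W)$ fails to be closed.

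The step I expect to require the most care is not any calculation but the bookkeeping at the boundary of the grading: the deletion operators carry special-case definitions in degrees $0$ and $1$, so I would separate out the empty-word cases in the $T(X)$ argument and isolate the identity $W_\cA^1=V_\cA^1$ before invoking it in the counterexample. Everything else reduces to short, direct verifications on basis tensors.
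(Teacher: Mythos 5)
Your proof is correct, and it follows the same skeleton as the paper's -- trivial closure for $T(V)$, a direct verification for $T(X)$, an explicit counterexample for $T(W)$ -- but both substantive steps are executed differently. For $T(X)$, the paper stays in Fourier space and simply observes that if $\hat{x}$ and $\hat{x}'$ neither start nor end with $\dBvec$, then neither does $\hat{x} \concat \hat{x}'$; you instead isolate the one-sided identities $\theta_L(z \concat z') = \theta_L(z) \concat z'$ and $\theta_R(z \concat z') = z \concat \theta_R(z')$ and deduce closure purely from the kernel description $X_\cA^k = \ker(\theta_L) \cap \ker(\theta_R)$. This is the same underlying fact in operator language, but your version makes explicit what the paper treats as evident, and it dovetails with the paper's Lemma that $\theta_L, \theta_R$ are derivations of $\shuffle$: concatenation obeys a one-sided rule where shuffle obeys a two-sided Leibniz rule, which is exactly why $X$ survives concatenation while $W$ need not. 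For the non-closure of $T(W)$, the paper takes $w = aaa$, $w' = bbbb$ and checks $\theta_L(w \concat w') \neq \theta_R(w \concat w')$; you instead exploit the degenerate degree-one case, noting that $\theta_L = \theta_R$ on $V_\cA^1$ forces $W_\cA^1 = V_\cA^1$, so that $a, b \in T(W)$ while $(\theta_L - \theta_R)(ab) = \tfrac{1}{\sqrt{q}}(b - a) \neq 0$ shows $a \concat b \notin T(W)$. Your counterexample is more economical, though it leans on the low-degree boundary conventions for the deletion operators (which you rightly flag as the delicate point); the paper's longer words avoid any appeal to those conventions. Both arguments are valid.
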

\begin{proof}
    It is obvious that $T(V)$ is closed under $\concat$ and recalling that $\hat{x}, \hat{x}' \in T(X)$, in Fourier space, means that $\hat{x}, \hat{x}'$ neither start nor end with $\dBvec$ then it is clear that neither does $\hat{x} \concat \hat{x}'$.
    As such, $\hat{x} \concat \hat{x}' \in T(X)$.

    That $T(W)$ is not closed under concatenation can be seen by a simple counterexample.
    Take $w, w' \in T(W)$ defined as $w = aaa, w' = bbbb$ for some arbitrary, non-zero letters $a, b \in \cA$.
    Then, returning to $\theta_L, \theta_R$ as they operate in native space,
    \begin{align*}
        \theta_L(w) = aa = \theta_R(w) \\
        \theta_L(w') = bbb = \theta_R(w')
    \end{align*}
    so clearly $w, w' \in \text{ker}(\theta_L - \theta_R)$.
    However,
    \begin{align*}
        \theta_L(w \mathbin\Vert w') = aabbbb \\
        \theta_R(w \mathbin\Vert w') = aaabbb
    \end{align*}
    so $w \mathbin\Vert w' \not\in \text{ker}(\theta_L - \theta_R)$.
    As such, we see that $T(W)$ is not closed under concatenation.
\end{proof}

While $T(X)$ carries the products $\shuffle$ and $\concat$, and $T(W)$ carries the product $\shuffle$, we will see that their natural coproducts are not well defined, in the sense that they do not map elements of $X$ and $W$ back to $X$ and $W$, respectively, but rather to $V$.
We will resolve this on $T(X)$, giving us that $T(X)$ is a combinatorial Hopf algebra under both product-coproduct pairs presented above, but we will show that $T(W)$ is still not closed under the deconcatenation coproduct.
However, when considering all $k$ simultaneously, we will see that $T(X)$ is the more important space to consider when dealing with the cycle space of \dB graphs, rather than $T(W)$.
Further, that $T(X)$ is a combinatorial Hopf algebra under both (product, coproduct) pairs given by $(\shuffle, \Delta_{\concat})$ and $(\concat, \Delta_{\shuffle})$ makes it a particularly intriguing space to consider.

We start by motivating why $T(X)$ is the more important space to consider than $T(W)$.
In Section \ref{sec:space_of_kmer_counts} we considered the operator $\psi_k: \Omega \to V_\cA^k$ which mapped a circular string $\omega \in \Omega$ to a tensor representing the count of all $k$-mers in $\omega$. 
We then showed that these tensors are actually spanned by the cycle space of the $k^{th}$-order \dB graph.
That is, $Im(\psi_k) \subseteq W_\cA^k$.
In keeping with the tensor algebraic perspective, we now consider a map $\psi: \Omega \to T(W)$ which maps a circular string to a count tensor representing the count of all $k$-mers in $\omega$, for all $k$.

Before proceeding, we require the following useful lemmas.

\begin{lemma}
    \label{lem:theta_xi}
    For $\hat{x} \in X_\cA^k$ we have that $\sqrt{\frac{j+1}{j}} \theta_L \Xi^*_{j, 0}(\hat{x}) = \sqrt{\frac{j+1}{j}} \theta_R \Xi^*_{j, 0}(\hat{x}) = \Xi^*_{j-1, 0}(\hat{x})$.
\end{lemma}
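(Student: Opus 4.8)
The plan is to exploit that for $h=0$ every cosine in the definition of $\Xi^*_{j,0}$ equals $1$, so that $\Xi^*_{j,0}(\hat x)=\frac{1}{\sqrt{j+1}}\sum_{\ell=0}^{j}(\theta_L^*)^{j-\ell}(\theta_R^*)^{\ell}\hat x$ is just the normalized sum over all ways of padding $\hat x$ with $j$ copies of $\dBvec$. I would then apply $\theta_L$ term-by-term and show that each summand either collapses to a padding of $\hat x$ by $j-1$ copies or vanishes, after which the survivors reassemble into $\Xi^*_{j-1,0}(\hat x)$.

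The three algebraic facts I would invoke are all available from the preliminaries: (i) $\theta_L\theta_L^*=\mathrm{id}$ (equivalently $\theta_R\theta_R^*=\mathrm{id}$), which can be read off from the Vertex Laplacian identity $\Lambda_V=(\theta_L\theta_L^*+\theta_R\theta_R^*)-A=2I-A$ together with the left/right symmetry, or verified in one line from $\theta_L^*(v)=\dBvec\,v$; (ii) $\theta_L$ commutes with $\theta_R^*$ (and $\theta_R$ with $\theta_L^*$), since one operator acts on the leftmost tensor slot and the other on the rightmost; and (iii) $\theta_L\hat x=\theta_R\hat x=0$, the defining property of $X_\cA^k=\ker(\theta_L)\cap\ker(\theta_R)$. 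Using (i) directly, each term with $\ell\le j-1$ has $\theta_L(\theta_L^*)^{j-\ell}=(\theta_L^*)^{(j-1)-\ell}$, so it becomes $(\theta_L^*)^{(j-1)-\ell}(\theta_R^*)^{\ell}\hat x$; the single term with $\ell=j$ carries no factor of $\theta_L^*$, so there I slide $\theta_L$ past $(\theta_R^*)^{j}$ by (ii) and annihilate it by (iii), obtaining $(\theta_R^*)^{j}\theta_L\hat x=0$. Summing the survivors over $\ell=0,\dots,j-1$ reproduces the unnormalized $\Xi^*_{j-1,0}$ sum, and matching the prefactors $\frac1{\sqrt{j+1}}$ against $\frac1{\sqrt j}$ gives $\theta_L\Xi^*_{j,0}(\hat x)=\sqrt{\frac{j}{j+1}}\,\Xi^*_{j-1,0}(\hat x)$, which is the claim after multiplying by $\sqrt{\frac{j+1}{j}}$.

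The $\theta_R$ statement follows by the mirror-image computation: I would use $\theta_R\theta_R^*=\mathrm{id}$ and the commutation of $\theta_R$ with $\theta_L^*$, so that now the term with $\ell=0$ dies (via $(\theta_L^*)^{j}\theta_R\hat x=0$) while the terms $\ell\ge1$ collapse and re-index to the same $\Xi^*_{j-1,0}(\hat x)$. I do not expect a deep obstacle; the only points requiring care are bookkeeping ones: establishing the normalization identity $\theta_L\theta_L^*=\mathrm{id}$ with the correct constant (the $1/\sqrt{|\cA|}$ factor in $\theta_L$ and the one hidden in $\dBvec$ must combine to exactly $1$), tracking which boundary term vanishes for $\theta_L$ as opposed to $\theta_R$, and noting that the statement is only meaningful for $j\ge1$, so that $\Xi^*_{j-1,0}$ and the factor $\sqrt{(j+1)/j}$ are both defined.
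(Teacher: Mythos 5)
Your proposal is correct and takes essentially the same route as the paper's own proof: both expand $\Xi^*_{j,0}$ as the unweighted padding sum (all cosines equal $1$ at $h=0$), collapse the terms with $\ell\le j-1$ via $\theta_L\theta_L^*=\mathrm{id}$, kill the single boundary term $\ell=j$ by commuting $\theta_L$ past $(\theta_R^*)^{j}$ and using $\theta_L\hat{x}=0$, and dispatch the $\theta_R$ statement by the mirror-image argument. The paper's proof is simply a more compressed version of the same normalization bookkeeping.
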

\begin{proof}
    \begin{align*}
        \sqrt{\frac{j+1}{j}} \theta_L \Xi^*_{j, 0} \hat{x} &= \sqrt{\frac{j+1}{j}} \theta_L (\frac{1}{\sqrt{j+1}} \sum_{i=0}^j (\theta_L^*)^{j-i}(\theta_R^*)^i)\hat{x} \\
        &= \frac{1}{\sqrt{j}}\sum_{i=0}^{j-1} (\theta_L^*)^{j-i-1}(\theta_R^*)^i\hat{x} + \frac{1}{\sqrt{j}} \theta_L (\theta_R^*)^j\hat{x} \\
        &= \Xi^*_{j-1, 0}\hat{x} + \frac{1}{\sqrt{j}} (\theta_R^*)^j \theta_L\hat{x} \\
        &= \Xi^*_{j-1, 0}\hat{x}
    \end{align*}
    since $\theta_L \hat{x} = 0$. The proof for $\theta_R \Xi^*_{j,0}$ follows analogously.
\end{proof}

\begin{corollary}
    \label{cor:xi_star_recursion}
    As an immediate corollary to the prior lemma, we obtain the following recursion, for any $\hat{x} \in X_\cA^k$ and $i \leq j$, for any $j$,
    \begin{align*}
        (\sqrt{j+1}) \theta_R^i \Xi^*_{j, 0} \hat{x} = (\sqrt{j - i + 1}) \Xi^*_{j-i, 0}(\hat{x})
    \end{align*}
    with an equivalent expression for $\theta_L$.
\end{corollary}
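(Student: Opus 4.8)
The plan is to prove this by induction on $i$, using Lemma \ref{lem:theta_xi} as the single-step recursion that we iterate. First I would restate that lemma in a form where the constants sit on the same side: for any $\hat{x} \in X_\cA^k$ and any padding $j \geq 1$, Lemma \ref{lem:theta_xi} is equivalent to $\sqrt{j+1}\,\theta_R \Xi^*_{j,0}(\hat{x}) = \sqrt{j}\,\Xi^*_{j-1,0}(\hat{x})$. The key observation is that this relation holds for the \emph{same} $\hat{x} \in X_\cA^k$ at \emph{every} value of the padding parameter, so we may reapply it after each application of $\theta_R$, decrementing the padding index by one each time.

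For the base case $i = 0$, the operator $\theta_R^0$ is the identity and both sides reduce to $\sqrt{j+1}\,\Xi^*_{j,0}(\hat{x})$, so the claim holds trivially. For the inductive step, suppose the statement holds for some $i$ with $i \leq j-1$, that is, $\sqrt{j+1}\,\theta_R^{i}\Xi^*_{j,0}(\hat{x}) = \sqrt{j-i+1}\,\Xi^*_{j-i,0}(\hat{x})$. I would apply $\theta_R$ to both sides and then invoke the restated lemma with padding $j-i$, which is at least $1$ by the hypothesis $i \leq j-1$, giving $\theta_R \Xi^*_{j-i,0}(\hat{x}) = \sqrt{\frac{j-i}{j-i+1}}\,\Xi^*_{j-i-1,0}(\hat{x})$. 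Substituting and collecting the square-root factors, the $\sqrt{j-i+1}$ cancels and leaves $\sqrt{j+1}\,\theta_R^{i+1}\Xi^*_{j,0}(\hat{x}) = \sqrt{j-i}\,\Xi^*_{j-i-1,0}(\hat{x})$, which is exactly the statement at index $i+1$ since $j-(i+1)+1 = j-i$. This closes the induction, and the identical argument with $\theta_L$ in place of $\theta_R$ yields the companion expression, since Lemma \ref{lem:theta_xi} is symmetric in the two deletion operators.

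There is no real obstacle here: the content lives entirely in Lemma \ref{lem:theta_xi}, and the corollary is a mechanical iteration of it. The only points requiring a little care are the bookkeeping of the telescoping square-root constants---the successive applications of the lemma contribute factors whose product collapses to $\sqrt{\frac{j-i+1}{j+1}}$---and the range constraint $i \leq j$, which guarantees that every intermediate padding value $j-i$ stays non-negative so that all the operators $\Xi^*_{j-i,0}$ remain well defined. The edge case $i = j$ is consistent: the right-hand coefficient becomes $\sqrt{1} = 1$ and $\Xi^*_{0,0}$ acts as the identity, recovering $\sqrt{j+1}\,\theta_R^{j}\Xi^*_{j,0}(\hat{x}) = \hat{x}$.
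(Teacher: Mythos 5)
Your proof is correct and matches the paper's intent exactly: the paper treats this corollary as an immediate iteration of Lemma \ref{lem:theta_xi}, and your induction on $i$ (with the telescoping factors $\sqrt{\tfrac{j-m}{j-m+1}}$ collapsing to $\sqrt{\tfrac{j-i+1}{j+1}}$) is just that iteration written out carefully. Your attention to the range constraint $i \leq j$ and the edge case $i = j$ is sound bookkeeping that the paper leaves implicit.
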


\begin{lemma}
    \label{lem:theta_ij_equiv}
    Let $w \in W_\cA^k$ and $i, j$ such that $i + j \leq k$. 
    Then $\theta_L^i \theta_R^j w \in W_\cA^{k-i-j}$.
    Further, for any $i, i', j, j'$ such that $i + j = i' + j'$, $\theta_L^i \theta_R^j w = \theta_L^{i'} \theta_R^{j'} w$.
\end{lemma}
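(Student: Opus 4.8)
The plan is to exploit two structural facts already established in the paper: that $\theta_L$ and $\theta_R$ commute for all $k$, and that $W_\cA^k = \ker(\theta_L - \theta_R)$. The first claim, membership in $W_\cA^{k-i-j}$, then follows almost immediately from commutativity. Since $\theta_L$ and $\theta_R$ commute, the operator $\theta_L - \theta_R$ commutes with $\theta_L^i \theta_R^j$, so
\begin{align*}
(\theta_L - \theta_R)(\theta_L^i \theta_R^j w) = \theta_L^i \theta_R^j (\theta_L - \theta_R) w = 0,
\end{align*}
where I use $w \in \ker(\theta_L - \theta_R)$. As $\theta_L^i \theta_R^j$ lowers the tensor degree by exactly $i+j$, the image lies in $\VA^{k-i-j}$ and is annihilated by $\theta_L - \theta_R$, i.e. it lies in $W_\cA^{k-i-j}$.

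For the second claim, the key observation I would use is that on any element of $W$ the two deletions agree: if $u \in W$, then $\theta_L u = \theta_R u$. Combined with the first claim—which guarantees that each deletion keeps us inside $W$—this lets me freely trade a right deletion for a left one. Concretely, I would prove the sharper identity $\theta_L^i \theta_R^j w = \theta_L^{i+j} w$ by induction on $j$. The base case $j = 0$ is trivial. For the inductive step I set $w' = \theta_R w = \theta_L w$, which lies in $W_\cA^{k-1}$ by the first claim, peel off the innermost $\theta_R$ to write $\theta_L^i \theta_R^{j+1} w = \theta_L^i \theta_R^j w'$, apply the inductive hypothesis to $w'$, and fold the resulting $\theta_L$ back in to obtain $\theta_L^{i+j+1} w$. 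Since this common value depends only on $i+j$, any two splits with $i+j = i'+j'$ yield $\theta_L^i \theta_R^j w = \theta_L^{i+j} w = \theta_L^{i'} \theta_R^{j'} w$.

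The only points requiring care are bookkeeping rather than genuine obstacles: I must verify that the degree constraint $i+j \le k$ propagates correctly so that the inductive hypothesis is applied at the reduced degree $k-1$ (it does, since $i+(j+1) \le k$ forces $i+j \le k-1$), and that the intermediate element $w'$ indeed lies in $W$ so that the identity $\theta_L w' = \theta_R w'$ remains available at the next step. This second point is exactly what the first claim supplies, so the two parts of the lemma are naturally proved together, with Part 1 feeding the induction in Part 2. I do not anticipate any substantive difficulty beyond keeping the degrees consistent.
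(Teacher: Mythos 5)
Your proof is correct and takes essentially the same route as the paper: both arguments rest on the commutativity of $\theta_L$ and $\theta_R$ together with the fact that they agree on elements of $W$, so that each deletion preserves membership in $W$ (with the degree dropping by one) and left and right deletions can be interchanged. The only difference is one of rigor: your explicit induction on $j$, via the sharper identity $\theta_L^i \theta_R^j w = \theta_L^{i+j} w$, fills in what the paper dispatches with ``repeated applications'' and ``it should be clear.''
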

\begin{proof}
    Since $w \in W_\cA^k$ we have that $\theta_L w = \theta_R w$.
    Additionally, it should be clear that 
    \begin{align*}
        \theta_L (\theta_L - \theta_R) w = (\theta_L - \theta_R) \theta_L w = (\theta_L - \theta_R) \theta_R w = \theta_R (\theta_L - \theta_R) w = 0
    \end{align*}
    so $\theta_L w, \theta_R w \in W_\cA^{k-1}$. 
    Repeated applications of $\theta_R, \theta_L$ give us that $\theta_L^i \theta_R^j w \in W_{\cA}^{k-i-j}$.
    It should be clear that this holds for any $i', j'$ if $i + j = i' + j'$.
\end{proof}

We are now prepared to state a proposition, demonstrating that while we care about elements in the cycle space, $T(W)$, we need only be concerned with the space $T(X)$.

\begin{proposition}
    Let $w_{k+\ell} \in W_\cA^{k+\ell}$, $w_k \in W_\cA^k$, and $\hat{x}_k \in X_\cA^k$ for any $k, \ell$. 
    Then,
    \begin{align*}
        \langle w_{k+\ell}, \Xi^*_{\ell, 0}(\hat{x}_k) \rangle = \frac{\sqrt{\ell+1}}{\sqrt{|\cA|^\ell}} \langle w_k, \hat{x}_k \rangle.
    \end{align*}
\end{proposition}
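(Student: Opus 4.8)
The plan is to move the padding operator $\Xi^*_{\ell,0}$ off the right-hand slot of the inner product and onto $w_{k+\ell}$ using adjoints, and then to exploit the fact that $w_{k+\ell}$ lies in the cycle space to collapse the resulting sum. Concretely, I would first expand the $h=0$ form of the operator, $\Xi^*_{\ell,0}(\hat{x}_k) = \frac{1}{\sqrt{\ell+1}}\sum_{i=0}^{\ell}(\theta_L^*)^{\ell-i}(\theta_R^*)^{i}\hat{x}_k$, and then use the adjoint relations $(\theta_L^*)^*=\theta_L$, $(\theta_R^*)^*=\theta_R$ to rewrite each summand as
\begin{align*}
\langle w_{k+\ell},\, (\theta_L^*)^{\ell-i}(\theta_R^*)^{i}\hat{x}_k\rangle = \langle \theta_R^{i}\theta_L^{\ell-i} w_{k+\ell},\, \hat{x}_k\rangle .
\end{align*}
This reduces the statement to understanding the family of $\ell$-fold deletions $\theta_R^{i}\theta_L^{\ell-i}w_{k+\ell}$ for $i\in\{0,\ldots,\ell\}$.

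The key step is that all of these deletions coincide. Since $w_{k+\ell}\in W_\cA^{k+\ell} = \ker(\theta_L-\theta_R)$, Lemma \ref{lem:theta_ij_equiv} gives $\theta_L^{\ell-i}\theta_R^{i}w_{k+\ell} = \theta_L^{\ell}w_{k+\ell}$ for every $i$ (using also that $\theta_L,\theta_R$ commute), so each of the $\ell+1$ summands is equal. Summing then yields
\begin{align*}
\langle w_{k+\ell},\, \Xi^*_{\ell,0}(\hat{x}_k)\rangle = \frac{\ell+1}{\sqrt{\ell+1}}\,\langle \theta_L^{\ell}w_{k+\ell},\, \hat{x}_k\rangle = \sqrt{\ell+1}\,\langle \theta_L^{\ell}w_{k+\ell},\, \hat{x}_k\rangle .
\end{align*}
It then remains only to match normalizations: each application of the native operator $\theta_L$ from Definition \ref{def:std_delete} carries a factor $\frac{1}{\sqrt{|\cA|}}$, so $\theta_L^{\ell}w_{k+\ell} = \frac{1}{\sqrt{|\cA|^{\ell}}}\,w_k$, where $w_k\in W_\cA^k$ is precisely the count-preserving $\ell$-fold reduction of $w_{k+\ell}$ (the unnormalized deletion). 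Substituting gives the claimed $\frac{\sqrt{\ell+1}}{\sqrt{|\cA|^{\ell}}}\langle w_k,\hat{x}_k\rangle$.

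The main obstacle — really the only non-routine point — is the collapse of the sum, which is exactly where the hypothesis $w_{k+\ell}\in W_\cA^{k+\ell}$ is essential: Lemma \ref{lem:theta_ij_equiv} holds because $\theta_L w = \theta_R w$ on the cycle space, so an $\ell$-fold deletion depends only on the total number of letters removed and not on how those removals are split between the left and right ends. In writing this up I would be careful to state explicitly that $w_k$ denotes this deletion image of $w_{k+\ell}$, so that the factor $\frac{1}{\sqrt{|\cA|^{\ell}}}$ is genuine rather than silently absorbed into $w_k$; I would also note that the argument never uses $\hat{x}_k\in X_\cA^k$, so the identity in fact holds for any $\hat{x}_k\in\cV_\cA^k$, the restriction to $X_\cA^k$ being imposed only by the intended application to cycle-space bases.
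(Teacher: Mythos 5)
Your proof is correct, and it is genuinely the transpose of the paper's argument rather than a reproduction of it. The paper works from the right-hand factor: using Corollary~\ref{cor:xi_star_recursion} together with Lemma~\ref{lem:theta_ij_equiv} applied to $\Xi^*_{\ell,0}(\hat{x}_k)\in W_\cA^{k+\ell}$, it writes $\hat{x}_k = \frac{1}{\sqrt{\ell+1}}\sum_{i=0}^{\ell}\theta_L^{i}\theta_R^{\ell-i}\,\Xi^*_{\ell,0}(\hat{x}_k)$, pairs this with $w_k$, moves the deletions across the inner product as insertions, and then identifies $\sum_{i=0}^{\ell}(\theta_L^*)^{i}(\theta_R^*)^{\ell-i}w_k$ with $\sqrt{|\cA|^{\ell}}\,w_{k+\ell}$ --- that is, it silently takes $w_{k+\ell}$ to be an insertion-lift, proportional to $\Xi^*_{\ell,0}(w_k)$. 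You work from the left-hand factor: you move the insertions inside $\Xi^*_{\ell,0}$ onto $w_{k+\ell}$ as deletions and invoke Lemma~\ref{lem:theta_ij_equiv} on $w_{k+\ell}$ itself, which pins down the dual relation $w_k = \sqrt{|\cA|^{\ell}}\,\theta_L^{\ell}w_{k+\ell}$. Since the proposition never states how $w_k$ and $w_{k+\ell}$ are related, each proof actually proves a different disambiguation of the statement, and yours is the preferable one, for three reasons. First, your $w_k$ automatically lies in $W_\cA^{k}$ (by Lemma~\ref{lem:theta_ij_equiv} again), whereas the paper's implicit lift need not lie in $W_\cA^{k+\ell}$: for $w_k\in W_\cA^{k}$ one computes $(\theta_L-\theta_R)\sum_{i=0}^{\ell}(\theta_L^*)^{i}(\theta_R^*)^{\ell-i}w_k = \bigl((\theta_R^*)^{\ell}-(\theta_L^*)^{\ell}\bigr)\theta_L w_k$, which is already nonzero for $q=2$, $\ell=1$, $w_2=\dBvec\hat{b}+\hat{b}\dBvec$; so the paper's proof manufactures a vector that can violate the proposition's own hypothesis. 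Second, your argument is the one that actually uses the stated hypothesis $w_{k+\ell}\in W_\cA^{k+\ell}$ (it is exactly what licenses collapsing the $\ell+1$ summands), and the deletion relation is the one matching the intended application: the $k$-mer and $(k+\ell)$-mer count vectors of a single circular string are related by coefficient-aggregating deletion, not by insertion. Third, your closing remarks are accurate: the relation between $w_k$ and $w_{k+\ell}$ must be stated for the claim to be well-posed, and your route never uses $\hat{x}_k\in X_\cA^{k}$ --- the paper's route does need it, through Corollary~\ref{cor:xi_star_recursion}, which relies on $\theta_L\hat{x}_k=\theta_R\hat{x}_k=0$ --- so under your reading the identity holds for every $\hat{x}_k\in\cV_\cA^{k}$.
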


\begin{proof}
    We first note that by Corollary \ref{cor:xi_star_recursion},
    \begin{align*}
        \hat{x}_k &= \sqrt{\ell - \ell + 1}\Xi^*_{\ell - \ell, 0}(\hat{x}_k)\\
        &= (\sqrt{\ell + 1})\theta_R^{\ell} \Xi^*_{\ell, 0} \hat{x}_k
    \end{align*}
    Then noting, from Lemma \ref{lem:theta_ij_equiv}, that $\theta_R^\ell = \theta_L^i \theta_R^j$ if $i + j = \ell$,
    \begin{align*}
        (\ell + 1) \hat{x}_k &= (\sqrt{\ell + 1}) \sum_{i=0}^\ell \theta_L^i \theta_R^{\ell-i} \Xi^*_{\ell, 0} \hat{x}_k \\
        \Rightarrow \hat{x}_k &= \frac{1}{\sqrt{\ell + 1}} \sum_{i=0}^\ell \theta_L^i \theta_R^{\ell-i} \Xi^*_{\ell, 0} \hat{x}_k.
    \end{align*}

    Then for any $w_k$, we have that
    \begin{align*}
        \langle w_k, \hat{x}_k \rangle &= \langle w_k, \frac{1}{\sqrt{\ell + 1}} \sum_{i=0}^\ell \theta_L^i \theta_R^{\ell-i} \Xi^*_{\ell, 0} \hat{x}_k \rangle \\
        &= \frac{1}{\sqrt{\ell + 1}} \langle w_k, \sum_{i=0}^\ell \theta_L^i \theta_R^{\ell-i} \Xi^*_{\ell, 0} \hat{x}_k \rangle \\
        &= \frac{1}{\sqrt{\ell + 1}} \langle \sum_{i=0}^{\ell} (\thetaLt)^i (\thetaRt)^{\ell-i} w_k, \Xi^*_{\ell, 0} \hat{x}_k \rangle \\
        &= \frac{1}{\sqrt{\ell + 1}} \langle \sqrt{|\cA|^\ell}w_{k+\ell}, \Xi^*_{\ell, 0} \hat{x}_k \rangle \\
        &= \frac{\sqrt{|\cA|^\ell}}{\sqrt{\ell + 1}} \langle w_{k + \ell}, \Xi^*_{\ell, 0} \hat{x}_k \rangle.
    \end{align*} 
\end{proof}

We now show why the coproducts of $\deshuffle, \deconcat$ do not provide group homomorphisms on the algebra $T(X)$ and $T(W)$.
We formally require that these group homomorphisms map $T(X) \to T(X) \otimes T(X)$ and $T(W) \to T(W) \otimes T(W)$, respectively.
In both tensor algebras we will show that the coproducts don't satisfy this as currently defined.
To do so, we will give examples with words in Fourier space, from an alphabet $\cA = \{a, b\}$. 
We arbitrarily assign the letter $\hat{a} = \dBvec$.

Consider the word $x \in T(X)$ given by $x = \hat{b} \dBvec \hat{b}$.
The coproducts operate on $x$ as,
\begin{align*}
    \deconcat(\hat{b} \dBvec \hat{b}) &= \emptyword \otimes \hat{b} \dBvec \hat{b} + \hat{b} \otimes \dBvec \hat{b} + \hat{b} \dBvec \otimes \hat{b} + \hat{b} \dBvec \hat{b} \otimes \emptyword \\ 
    \deshuffle(\hat{b} \dBvec \hat{b}) &= \emptyword \otimes \hat{b} \dBvec \hat{b} + \hat{b} \otimes \dBvec \hat{b} + \dBvec \otimes \hat{b}\hat{b} + \hat{b} \otimes \hat{b} \dBvec + \hat{b}\dBvec \otimes \hat{b} + \hat{b}\hat{b} \otimes \dBvec + \dBvec\hat{b} \otimes \hat{b} + \hat{b} \dBvec \hat{b} \otimes \emptyword.
\end{align*}
It is clear in both cases that the coproduct splits apart an element of $T(X)$ into elements $\in T(V)$ but $\not\in T(X)$, for example $\dBvec\hat{b}, \hat{b}\dBvec \not\in T(X)$. 

Now, take the word $w \in T(W)$ given by $w = \Xi^*_{1, 0}(\hat{b}\hat{b}) = \frac{1}{2}(\dBvec\hat{b}\hat{b} + \hat{b}\hat{b}\dBvec)$.
The coproducts operate on $w$ as,
\begin{align*}
    \deconcat(\dBvec\hat{b}\hat{b} + \hat{b}\hat{b}\dBvec) &= \deconcat(\dBvec\hat{b}\hat{b}) + \deconcat(\hat{b}\hat{b}\dBvec) \\
    &= \emptyword \otimes \dBvec \hat{b} \hat{b} + \dBvec \otimes \hat{b} \hat{b} + \dBvec\hat{b} \otimes \hat{b} + \dBvec\hat{b}\hat{b} \otimes \emptyword + \emptyword \otimes \hat{b}\hat{b}\dBvec + \hat{b} \otimes \hat{b} \dBvec + \hat{b}\hat{b} \otimes \dBvec + \hat{b}\hat{b}\dBvec \otimes \emptyword \\
    \deshuffle(\dBvec\hat{b}\hat{b} + \hat{b}\hat{b}\dBvec) &= \deshuffle(\dBvec\hat{b}\hat{b}) + \deshuffle(\hat{b}\hat{b}\dBvec) \\
    &= \emptyword \otimes \dBvec \hat{b}\hat{b} + 2(\dBvec \otimes \hat{b}\hat{b}) + 2(\hat{b} \otimes \dBvec \hat{b}) + 2(\dBvec\hat{b} \otimes \hat{b}) + 2(\hat{b}\hat{b} \otimes \dBvec) + \dBvec \hat{b}\hat{b} \otimes \emptyword + \\
    &\quad \emptyword \otimes \hat{b}\hat{b}\dBvec + 2(\hat{b} \otimes \hat{b}\dBvec) + 2(\hat{b}\dBvec \otimes \hat{b}) + \hat{b}\hat{b}\dBvec \otimes \emptyword.
\end{align*}
Similarly, we see that both coproducts split apart an element of $T(W)$ into elements $\in T(V)$ but $\not\in T(W)$, for example $\dBvec\hat{b}, \dBvec\hat{b}\hat{b}, \hat{b}\dBvec, \hat{b}\hat{b}\dBvec \not\in T(W)$.

We now show that we can re-index the alphabet in such a way to preserve the spaces $T(X), T(W)$ and their closure under the products of $\shuffle, \concat$, while ensuring that $\deconcat, \deshuffle$ are valid co-products on $T(X)$.
Under this re-indexing, we find that these coproducts still do not preserve the group elements of $T(W)$, and it is an open question to determine whether there are any natural coproducts on $T(W)$.
However, we have shown that $T(X)$ conveys all of the relevant information of $T(W)$ and we will now see that $T(X)$ does, indeed, form dual combinatorial Hopf algebras under the $(\shuffle, \deconcat)$ and $(\concat, \deshuffle)$ (product, coproduct) pairs.

We start by introducing the notion of a \textit{primitive word}.

\begin{definition}[Primitive words on $T(X)$]
    An primitive word on $T(X)$, in Fourier space, is any word $\in T(X)$ that cannot be represented as the concatenation of any two words in $T(X)$.
    Words of length $1$ that are $\neq \dBvec$ are also considered primitive.

\end{definition}

What this means is that an primitive word can only contain contiguous, repeat occurrences of the letter $\dBvec$. If any sequence of letters, not necessarily the same, that are $\neq \dBvec$ occur in a contiguous sequence, then this word can further be broken down into primitive words.

We give an example of primitive words, and of how this re-indexing works on $X$.
\begin{example}
    Take $\cA = \{\dBvec, \hat{b}, \hat{c} \}$.
    Example words $\in T(X)$ include: $\hat{b}\dBvec\hat{b}, \hat{b}\hat{b} \dBvec\hat{b}, \hat{b}\dBvec\dBvec\hat{c}\dBvec\hat{b}, \hat{b}\dBvec\dBvec\hat{c}\hat{b}\dBvec\hat{c}$.\\

    From the previous definition, we see that the words $\hat{b}\dBvec\hat{b}$ and $\hat{b}\dBvec\dBvec\hat{c}\dBvec\hat{b}$ are primitive, while $\hat{b}\hat{b} \dBvec\hat{b}$ and $\hat{b}\dBvec\dBvec\hat{c}\hat{b}\dBvec\hat{c}$ are not and, in fact, can be reduced as $\hat{b} \concat \hat{b}\dBvec\hat{b}$ and $\hat{b}\dBvec\dBvec\hat{c} \concat \hat{b}\dBvec \hat{c}$, respectively.\\

    Let us now consider a new alphabet, $\Tilde{\cA}$ whose letters are given by the set of all primitive words.
    For the sake of our example, we arbitrarily denote letters $\Tilde{a} = \hat{b} \dBvec \hat{b}, \Tilde{b} = \hat{b}, \Tilde{c} = \hat{b}\dBvec\dBvec\hat{c}\dBvec\dBvec\hat{c}, \Tilde{d} = \hat{b}\dBvec\hat{c}, \Tilde{e} = \hat{b}\dBvec\dBvec\Tilde{c}$.
    Now, each of $\Tilde{a}, \Tilde{b}, \Tilde{c}, \Tilde{d}, \Tilde{e}$ are primitive words.
    In this new alphabet, the four words above become: $\Tilde{a}, \Tilde{b}\Tilde{a}, \Tilde{c}, \Tilde{e}\Tilde{d}$.

\end{example}

Note that re-indexing the alphabet into primitive words does not change the spaces $T(W), T(X)$, and neither does it affect their closure under the products $\shuffle$ and $\concat$.
However, on $T(X)$ we get that $\deshuffle$ and $\deconcat$ map to $T(X) \otimes T(X)$ because the elements $\dBvec$ can no longer be split up by the operations of de-shuffling and de-concatenation.
We observed in our prior example that these operations, when applied on the base alphabet $\cA$, mapped elements of $T(X)$ to elements with $\dBvec$ on the boundaries of the word. 
When we consider primitive words as a letter then we will never obtain such an undesirable split when applying the coproducts.

To see this, consider $x \in T(X)$ where $x = \hat{b}\dBvec\dBvec\hat{c}\hat{b}\dBvec\hat{c}$.
The actions of deshuffling and deconcatenation, with respect to primitive words, are given by
\begin{align*}
    \deconcat(\hat{b}\dBvec\dBvec\hat{c}\hat{b}\dBvec\hat{c}) &= \emptyword \otimes \hat{b}\dBvec\dBvec\hat{c}\hat{b}\dBvec\hat{c} + \hat{b}\dBvec\dBvec\hat{c} \otimes \hat{b}\dBvec\hat{c} + \hat{b}\dBvec\dBvec\hat{c}\hat{b}\dBvec\hat{c} \otimes \emptyword \\
    \deshuffle (\hat{b}\dBvec\dBvec\hat{c}\hat{b}\dBvec\hat{c}) &= \emptyword \otimes \hat{b}\dBvec\dBvec\hat{c}\hat{b}\dBvec\hat{c} +  \hat{b}\dBvec\dBvec\hat{c} \otimes \hat{b}\dBvec\hat{c} + \hat{b}\dBvec\hat{c} \otimes \hat{b}\dBvec\dBvec\hat{c} + \hat{b}\dBvec\dBvec\hat{c}\hat{b}\dBvec\hat{c} \otimes \emptyword.
\end{align*}
We immediately see that when primitive words are kept together, the coproducts both map $T(X) \to T(X) \otimes T(X)$.
Equipped with the antipode given in Definition \ref{def:antipode}, we observe that $T(X)$ forms dual combinatorial Hopf algebras with respect to $(\shuffle, \deconcat)$ and $(\concat, \deshuffle)$.

 \section{Discussion}

In this work we studied the vector spaces of De Bruijn graphs.
To do so, we introduced a novel set of operators, $\theta_L$ and $\theta_R$, that allow us to translate the graph adjacency, incidence, and Laplacian matrices into operators on tensor spaces.
When we view the space of all $k$-mers as a tensor space, $V_\cA^k$, the graph operators $A, \inc, \Lambda_V$, and $\Lambda_E$ are seen to simultaneously operate over all possible $k$.
Through a Fourier analysis of these operators we are able to obtain a closed-form expression for the eigenvectors of both $\Lambda_V$ and $\Lambda_E$, which further decompose into a basis for the cut space, $(W_\cA^k)^{\perp}$ and cycle space, $W_\cA^k$ of all De Bruijn graphs.

We finally show that there is a subspace of the cycle space, $X_\cA^k$ that fully characterizes the cycle space itself.
That is the subspace of Fourier words, as defined by those words, in Fourier space, that neither start nor end with the special character, $\dBvec$.
When considering the direct sum of all the vector spaces $X_\cA^k$, we obtain a graded, filtered tensor algebra, $T(X)$.
Operating over the alphabet of primitive words on $T(X)$, this space carries the structure of dual combinatorial Hopf algebras under the products of concatenation, $\concat$, and shuffle, $\shuffle$, with coproducts given by de-shuffle, $\deshuffle$, and de-concatenation, $\deconcat$, respectively.\\

\paragraph{Future work:} One important future direction is to define a discrete exterior calculus on the edge space of De Bruijn graphs and expand on concepts related to cohomology and discrete differential geomtry.
We additionally see future work connecting the operators defined here to other algebras, for example Rota-Baxter algebras, as well as understanding whether there exists any compatible coproducts on the cycle space.
Another interesting direction is to connect further to modern graph theory perspectives, such as finding the Ihara-Zeta function for De Bruijn graphs and examining what this means.

Another natural extension would be to move from discrete operators to continuous ones. 
Naturally, the discrete Fourier transform here could be replaced by it's continuous counterpart, and analogous definitions from $\theta_L, \theta_R$ can be defined.
In this way, we would see De Bruijn graphs playing a larger rule in generalized time-series and sequential data analysis.
It would also be interesting to define an analogous graph structure on higher dimensional data beyond the 1d case, such as images, etc.
To the best of our knowledge, there has not been any study of a De Bruijn graph equivalent on 2d image patches.
Finally, the work here presents a basis that can fully represent $k$-mer count tensors.
In machine learning, natural language processing, and bioinformatics, string kernels on $k$-mer counts are commonly used, for some fixed values of $k$, and have in the past been computed using suffix trees and other algorithmic techniques.
We are hopeful that this analysis will pave the way to richer string kernel designs, and perhaps even a closed-form expression for a kernel function that measures similarity between circular strings across all $k$ simultaneously.  
\section{Acknowledgements}

We would like to thank Glenn Tesler for his thorough review of the manuscript and discussion of the ideas in this paper.

\section{Author Contributions}
Following the CRediT taxonomy\footnote{As laid our here: \url{https://beta.elsevier.com/researcher/author/policies-and-guidelines/credit-author-statement}.},\\

\textbf{Anthony Philippakis:} Conceptualization, Formal analysis, Funding acquisition, Methodology, Investigation, Writing - Original Draft, Writing - Review \& Editing, Supervision, Project administration, Validation.
\textbf{Neil Mallinar:} Methodology, Formal analysis, Investigation, Writing - Original Draft, Writing - Review \& Editing, Visualization, Validation.
\textbf{Parthe Pandit:} Formal analysis, Investigation, Writing - Original Draft, Writing - Review \& Editing, Validation.
\textbf{Mikhail Belkin:} Supervision, Validation.

\appendix
\section{More on deletion operators}
\label{apdx:deletion_ops}

We first recall the definition of the deletion operators and their adjoints.

\begin{definition}[Deletion operators] 
For $k=1$, returns scalar 1, $k=0$, returns 0.
For $k\geq 2$, we have the left and right delete operators $\theta_L$ and $\theta_R$ as linear maps from $\VA^k\mapsto \VA^{k-1}$, defined as follows,
\begin{align}
    \theta_L(v) &= \frac{1}{\sqrt{|\cA|}} v_1\ldots v_{k-1}\in \VA^{k-1}\qquad\forall v \in\VA^k \\
    \theta_R(v) &= \frac{1}{\sqrt{|\cA|}} v_0\ldots v_{k-2}\in \VA^{k-1}\qquad\forall v \in\VA^k.
\end{align}

These operators have their own adjoint operators, which are linear maps from $\VA^{k-1}\mapsto\VA^{k}$ given by, 
\begin{align}
    \theta^*_L(v) &= \dBvec v \in\VA^k \qquad\forall v \in\VA^{k-1} \\
    \theta^*_R(v) &= v \dBvec \in\VA^k \qquad\forall v \in\VA^{k-1}
\end{align}
where $\dBvec$ is the de Bruijn vector defined in Definition \ref{def:dbvec}.
\end{definition}

Intuitively, we see that the transpose deletion operators, $\theta_L^*$ and $\theta_R^*$, act to insert all possible letters from $\cA$ into the leftmost or rightmost position of a $k$-mer, resulting in all possible $(k+1)$-mers that can be obtained from a given $k$-mer.

We provide simple examples to illustrate why the deBruijn vector, $\dBvec$, is of importance and how it is used to define the deletion operators.
In the case $k=2$ we want that $\theta_L(v_0 \otimes v_1) = v_1$.
Here we see that the tensor product is given by the outer product $v_0 \otimes v_1 = v_0 v_1^T$.
A linear operator $\theta_L$ must then serve as a vector $v$ such that $v^Tv_0v_1^T = v_1^T$. 
This requires $v^Tv_0 = 1 \ \forall \ v_0 \in \VA$.
We immediately see that the only such vector is given by $v = \dBvec$ when $\VA$ is given by the standard basis.
Similarly we have that $\theta_R(v_0 \otimes v_1) = v_0 \Rightarrow v_0v_1^Tv = v_0$ which requires that $v_1^T v = 1 \ \forall v_1 \ \in \VA$. Again, the vector $v = \dBvec$ is the only candidate.

From this example we can see that the generalized operators, for any $k$, are given by 
\begin{align}
    \label{eqn:thetaL_formal_dfn}
   \theta_L(v_0 \otimes v_1 \otimes \cdots \otimes v_{k-1}) &= \frac{1}{\sqrt{|\cA|}}\langle \dBvec, v_0 \rangle (v_1 \otimes \cdots \otimes v_{k-1}) \\
   \label{eqn:thetaR_formal_dfn}
   \theta_R(v_0 \otimes v_1 \otimes \cdots \otimes v_{k-1}) &= \frac{1}{\sqrt{|\cA|}}\langle \dBvec, v_{k-1} \rangle(v_0 \otimes v_1 \otimes \cdots \otimes v_{k-2})
\end{align}
and the adjoint operators are well-defined, as given $v^{(1)} \in \VA^k, v^{(2)} \in \VA^{k-1}$ the adjoints must satisfy that $\langle \theta_L(v^{(1)}), v^{(2)} \rangle = \langle v^{(1)}, \theta_L^*(v^{(2)}) \rangle$ and $\langle \theta_R(v^{(1)}), v^{(2)} \rangle = \langle v^{(1)}, \theta_R^*(v^{(2)}) \rangle$.

Immediately we notice the following identities on $\theta_L, \theta_R$:
\begin{align*}
    \theta_L \theta_L^* = \theta_R \theta_R^* = I \\
    (\theta_L^* \theta_L)^2 = \theta_L^* \theta_L = (\theta_L^* \theta_L)^* \\
    (\theta_R^* \theta_R)^2 = \theta_R^* \theta_R = (\theta_R^* \theta_R)^* 
\end{align*}
additionally showing that $\theta_L^* \theta_L, \theta_R^* \theta_R$ are orthogonal projections.
We also note that that these operators commute in the following ways: 
\begin{align*}
    \theta_L\theta_R &= \theta_R\theta_L \\
    \theta_L^*\theta_R^* &= \theta_R^*\theta_L^* \\
    \theta_L\theta_R^* &= \theta_R^*\theta_L \\
    \theta_L^*\theta_R &= \theta_R\theta_L^*.
\end{align*} \section{Combinatorial proof of \tops{$\text{dim}(\cC(G_k)) = |\cA|^k - |\cA|^{k-1} + 1$}{dim(C(Gk))}}
\label{apdx:combinatorial_proof}

We provide a combinatorial proof for the number of basis vectors that span the cycle space of $G_k$ with $|\cA| = q$.
For each $a \in \cA$ we have an equivalent character in Fourier space, $f_a \in \C^q$ (equivalently denoted as $\hat{a}$).
We will denote a special character $f_0 = \mathbf{1}_q = \dBvec$ by the all-ones vector, or the first column of the equivalent Fourier and Hadamard basis matrices $F_q, H_q$, respectively.
We arbitrarily select a letter $a \in \cA$ to be represented by $f_0$ and the remainder of the $q-1$ letters given by $f_1, \cdots f_{q-1}$, such that $f_i \perp f_j$ for $i \neq j$.

Let $X_\cA^r$ represent all ``Fourier words" of length $r$ composed by characters in Fourier space such that the word neither starts nor ends with $f_0$.
(Every element of $X_\cA^r$ is a summation over all $r$-mers from $\cA$.)

For example, 
\begin{align*}
X_\cA^0 = \{ \emptyset \} && X_\cA^1 = \{f_1, f_2, \cdots, f_{q-1}\} && X_\cA^2 = \{ f_1 f_1, \cdots, f_1 f_{q-1}, \cdots, f_{q-1}f_{q-1} \},
\end{align*}
and so on for $r \leq k$. Note that $f_if_j = f_i \otimes f_j$ as defined by the tensor product operation.
Then we obtain a basis vector of the cycle space of $G_k$ for each $x \in X_\cA^r$, for all $r$.
The total number of basis vectors is then given by $\sum_{r=0}^{k} |X_\cA^r|$ (here we consider $|X_\cA^0| = 1$).

We will now count the size of each $|X_\cA^r|$ for arbitrary $k$.
There are two base cases we count easily.
For $r=0$ there is precisely one basis vector given by padding out the empty word $\emptyset$ with $k$ copies of $f_0$.
For $r=1$ there are $q-1$ words that do not start or end with $f_0$, namely: $f_1, \cdots, f_{q-1}$.
We obtain a basis vector for each of them.

For a word of length $r$ such that $2 \leq r \leq k$, we can fill the $r-2$ inner letters of the word with any of the $q$ characters $\{f_0,\ldots, f_{q-1}\}$, whereas the first and last characters can be $f_0$ as well.
Therefore, for a fixed $r$ we would have $q^{r-2}(q-1)^2$ Fourier words that neither start nor end with $f_0$.
Summing over all possible length $r$ words with $r \in [2, k]$, we have that
\begin{align*}
    \sum_{r=2}^{k} |X_\cA^r| &= \sum_{r=2}^k q^{r-2}(q-1)^2 = (q-1)^2 \sum_{r=2}^k q^{r-2} = (q-1)^2 \sum_{r=0}^{k-2} q^r \\
    &= (q-1)^2\Bigg[\frac{1-q^{k-2+1}}{1-q}\Bigg] = (q-1)^2\Bigg[\frac{q^{k-1}-1}{q-1}\Bigg] \\
    &= (q^{k-1}-1)(q-1) = q^k - q^{k-1} - q + 1
\end{align*}

In sum, the total number of basis vectors is given by
\begin{align*}
    |X_\cA^0| + |X_\cA^1| + \sum_{r=2}^{k} |X_\cA^r| = 1 + (q-1) + q^k - q^{k-1} - q + 1 = q^k - q^{k-1} + 1.
\end{align*} \section{Examples of cycle- and cut-space bases of de Bruijn graphs}
\label{app:sample_bases}

We provide examples of cycle basis and cut basis elements for a few values of $q (= |\cA|)$ and $k$.

Recall that 
\begin{align*}
    \Xi^*_{j, h}(v) = \frac{1}{\sqrt{j+1}} \sum_{\ell=0}^{j} \cos \Bigg( \frac{(2\ell+1)h\pi}{2(j+1)} \Bigg) (\thetaLt)^{j-\ell}(\thetaRt)^{\ell}(v),
\end{align*}
$X_\cA^k = \text{ker}(\theta_L) \cap \text{ker}(\theta_R)$, and that the cycle space, $W_\cA^k$, and cut space, $(W_\cA^k)^{\perp}$, are given by
\begin{align*}
    W_\cA^k &= \Xi^*_{k, 0}(X_\cA^0) \oplus \Xi^*_{k-1, 0}(X_\cA^{1}) \oplus \Xi^*_{k-2, 0}(X_\cA^2) \oplus \cdots \oplus \Xi^*_{0, 0}(X_\cA^k) \\
(W_\cA^k)^{\perp} &= [\oplus_{h=1}^{k-1} \Xi^*_{k-1, h}(X_\cA^1)] \oplus [\oplus_{h=1}^{k-2}\Xi^*_{k-2, h}(X_\cA^2)] \oplus \cdots \oplus [\oplus_{h=1}^2 \Xi^*_{2, h}(X_\cA^{k-2})] \oplus [\Xi^*_{1, 1}(X_\cA^{k-1})].
\end{align*}

\subsection*{Basis for $q = 2, k = 2$}

$\cA = \{ a, b \}, \hat{\cA} = \{\dBvec, \hat{b}\} = \{ \frac{1}{\sqrt{2}} (a + b), \frac{1}{\sqrt{2}} (a - b) \}$.\\

\paragraph{Elements of \texorpdfstring{$X_\cA^k$}{X_A^k}}

\begin{align*}
    X_\cA^0 &= \{ \emptyword \} \\
    X_\cA^1 &= \{ \hat{b} \} \\
    X_\cA^2 &= \{ \hat{b}\hat{b} \}
\end{align*}

\paragraph{Cycle basis}

\begin{align*}
    \Xi^*_{2, 0}(\emptyword) &= \frac{3}{\sqrt{3}} (\dBvec\dBvec) \\
    \quad &= \frac{3}{2\sqrt{3}}(aa + ab + ba + bb)\\
    \Xi^*_{1, 0}(\hat{b}) &= \frac{1}{\sqrt{2}}(\dBvec\hat{b} + \hat{b}\dBvec) \\
    &= \frac{1}{\sqrt{2}}(aa - bb) \\
    \Xi^*_{0, 0}(\hat{b}\hat{b}) &= \hat{b}\hat{b} \\
    &= \frac{1}{2}(aa - ab - ba + bb)
\end{align*}

\paragraph{Cut basis}

\begin{align*}
    \Xi^*_{1, 1}(\hat{b}) &= \frac{1}{\sqrt{2}}(\cos(\frac{\pi}{4})\dBvec\hat{b} + \cos(\frac{3\pi}{4})\hat{b}\dBvec)
\end{align*}

\subsection*{Basis for $q = 2, k = 3$}

$\cA = \{ a, b \}, \hat{\cA} = \{\dBvec, \hat{b}\} = \{ \frac{1}{\sqrt{2}} (a + b), \frac{1}{\sqrt{2}}(a - b) \}$.\\

\paragraph{Elements of $X_\cA^k$}

\begin{align*}
    X_\cA^0 &= \{ \emptyword \} \\
    X_\cA^1 &= \{ \hat{b} \} \\
    X_\cA^2 &= \{ \hat{b}\hat{b} \} \\
    X_\cA^3 &= \{ \hat{b}\hat{b}\hat{b}, \hat{b}\dBvec\hat{b} \}
\end{align*}

\paragraph{Cycle basis}

\begin{align*}
    \Xi^*_{3, 0}(\emptyword) &= 2(\dBvec\dBvec\dBvec) \\
    \Xi^*_{2, 0}(\hat{b}) &= \frac{1}{\sqrt{3}}(\dBvec\dBvec\hat{b} + \dBvec\hat{b}\dBvec + \dBvec\dBvec\hat{b}) \\
    \Xi^*_{1, 0}(\hat{b}\hat{b}) &= \frac{1}{\sqrt{2}}(\dBvec\hat{b}\hat{b} + \hat{b}\hat{b}\dBvec) \\
    \Xi^*_{0, 0}(\hat{b}\dBvec\hat{b}) &= \hat{b}\dBvec\hat{b} \\
    \Xi^*_{0, 0}(\hat{b}\hat{b}\hat{b}) &= \hat{b}\hat{b}\hat{b}
\end{align*}

\paragraph{Cut basis}

\begin{align*}
    \Xi^*_{2, 1}(\hat{b}) &= \frac{1}{\sqrt{3}}(\cos(\frac{\pi}{6})\dBvec\dBvec\hat{b} + \cos(\frac{3\pi}{6})\dBvec\hat{b}\dBvec + \cos(\frac{5\pi}{6})\dBvec\dBvec\hat{b}) \\
    \Xi^*_{2, 2}(\hat{b}) &= \frac{1}{\sqrt{3}}(\cos(\frac{2\pi}{6})\dBvec\dBvec\hat{b} + \cos(\frac{6\pi}{6})\dBvec\hat{b}\dBvec + \cos(\frac{10\pi}{6})\dBvec\dBvec\hat{b}) \\
    \Xi^*_{1, 1}(\hat{b}\hat{b}) &= \frac{1}{\sqrt{2}}(\cos(\frac{\pi}{4})\dBvec\hat{b}\hat{b} + \cos(\frac{3\pi}{4})\hat{b}\hat{b}\dBvec)
\end{align*}

\subsection*{Basis for $q = 4, k = 2$}

$\cA = \{ a, b, c, d \}, \hat{\cA} = \{\dBvec, \hat{b}, \hat{c}, \hat{d}\} = \{ \frac{1}{\sqrt{4}}(a + b + c + d), \frac{1}{\sqrt{4}}(a - b + c - d), \frac{1}{\sqrt{4}}(a + b - c - d), \frac{1}{\sqrt{4}}(a - b  -c + d)\}$.\\

\paragraph{Elements of $X_\cA^k$}

\begin{align*}
    X_\cA^0 &= \{ \emptyword \} \\
    X_\cA^1 &= \{ \hat{b}, \hat{c}, \hat{d} \} \\
    X_\cA^2 &= \{ \hat{b}\hat{b}, \hat{b}\hat{c}, \hat{b}\hat{d}, \hat{c}\hat{b}, \hat{c}\hat{c}, \hat{c}\hat{d}, \hat{d}\hat{b}, \hat{d}\hat{c}, \hat{d}\hat{d} \}
\end{align*}

\paragraph{Cycle basis}

\begin{align*}
    \Xi^*_{2, 0}(\emptyword) &= \frac{3}{\sqrt{3}}(\dBvec\dBvec) \\
    \Xi^*_{1, 0}(\hat{b}) &= \frac{1}{\sqrt{2}}(\dBvec\hat{b} + \hat{b}\dBvec)\\
    \Xi^*_{1, 0}(\hat{c}) &= \frac{1}{\sqrt{2}}(\dBvec\hat{c} + \hat{c}\dBvec) \\
    \Xi^*_{1, 0}(\hat{d}) &= \frac{1}{\sqrt{2}}(\dBvec\hat{d} + \hat{d}\dBvec)\\
    \Xi^*_{0, 0}(\hat{b}\hat{b}) &= \hat{b}\hat{b}\\
    \Xi^*_{0, 0}(\hat{b}\hat{c}) &= \hat{b}\hat{c}\\
    \Xi^*_{0, 0}(\hat{b}\hat{d}) &= \hat{b}\hat{d}\\
    \Xi^*_{0, 0}(\hat{c}\hat{b}) &= \hat{c}\hat{b}\\
    \Xi^*_{0, 0}(\hat{c}\hat{c}) &= \hat{c}\hat{c}\\
    \Xi^*_{0, 0}(\hat{c}\hat{d}) &= \hat{c}\hat{d}\\
    \Xi^*_{0, 0}(\hat{d}\hat{b}) &= \hat{d}\hat{b}\\
    \Xi^*_{0, 0}(\hat{d}\hat{c}) &= \hat{d}\hat{c}\\
    \Xi^*_{0, 0}(\hat{d}\hat{d}) &= \hat{d}\hat{d}\\
\end{align*}

\paragraph{Cut basis}

\begin{align*}
    \Xi^*_{1, 1}(\hat{b}) &= \frac{1}{\sqrt{2}}(\cos(\frac{\pi}{4})\dBvec\hat{b} + \cos(\frac{3\pi}{4})\hat{b}\dBvec)\\
    \Xi^*_{1, 1}(\hat{c}) &= \frac{1}{\sqrt{2}}(\cos(\frac{\pi}{4})\dBvec\hat{c} + \cos(\frac{3\pi}{4})\hat{c}\dBvec)\\
    \Xi^*_{1, 1}(\hat{d}) &= \frac{1}{\sqrt{2}}(\cos(\frac{\pi}{4})\dBvec\hat{d} + \cos(\frac{3\pi}{4})\hat{d}\dBvec)\\
\end{align*} 
\bibliographystyle{elsarticle-num-names} 
 \bibliography{main}

\end{document}